\theoremstyle{plain}   %---
\newtheorem{lemma}{Lemma}[section]
\newtheorem{theorem}[lemma]{Theorem}
\newtheorem{corollary}[lemma]{Corollary}
\newtheorem{proposition}[lemma]{Proposition}
\newtheorem{definition}[lemma]{Definition}
\theoremstyle{remark}
\newtheorem{remark}[lemma]{Remark}
\newtheorem{example}[lemma]{Example}
\def\eps{\varepsilon}
\def\SSX{\mathcal{S}_{\mathrm{X}}}
\def\SSC{\mathcal{S}_{\mathrm{C}}}
\def\SX{S_{\mathrm{X}}}
\def\SC{S_{\mathrm{C}}}
\def\KK{\mathcal{K}}
\def\CRT{\mathcal{T}_\infty}
\def\Gn{\bm{G}^{(n)}}
\def\Tn{\bm{T}^{(n)}}
\def\Gm{\bm{G}^{(m)}}
\def\mD{\mathcal{D}}
\def\mDSX{\mathcal{D}_{\SSX}}
\def\mDSC{\mathcal{D}_{\SSC}}
\def\mDK{\mathcal{D}_{\KK}}
\def\mDdc{\overline{\mathcal{D}}}
\def\mDSXdc{\overline{\mathcal{D}}_{\SSX}}
\def\mDSCdc{\overline{\mathcal{D}}_{\SSC}}
\def\mDKdc{\overline{\mathcal{D}}_{\KK}}
\def\mDab{\mathcal{D}_{a}^{b}}
\def\mDXC{\mathcal{D}_{\SSX}^{\SSC}}
\def\mDaK{\mathcal{D}_{a}^{\KK}}
\def\mDaX{\mathcal{D}_{a}^{\SSX}}
\def\mDaC{\mathcal{D}_{a}^{\SSC}}
\def\mDKb{\mathcal{D}_{\KK}^{b}}
\def\mDXb{\mathcal{D}_{\SSX}^{b}}
\def\mDCb{\mathcal{D}_{\SSC}^{b}}
\def\mDap{\mathcal{D}_{a}^{\bullet}}
\def\mDpb{\mathcal{D}_{\bullet}^{b}}
\def\J{\mathcal{J}}
\def\I{\mathcal{I}}
\def\mDdcaK{\overline{\mathcal{D}}_{a}^{\KK}}
\def\mDdcaX{\overline{\mathcal{D}}_{a}^{\SSX}}
\def\mDdcKb{\overline{\mathcal{D}}_{\KK}^{b}}
\def\mDdcXb{\overline{\mathcal{D}}_{\SSX}^{b}}
\def\mDdcap{\overline{\mathcal{D}}_{a}^{\bullet}}
\def\mDdcpb{\overline{\mathcal{D}}_{\bullet}^{b}}
\def\mDdcab{\overline{\mathcal{D}}_{a}^{b}}
\def\Ddc{\overline{D}}
\def\DSXdc{\overline{D}_{\SSX}}
\def\DSCdc{\overline{D}_{\SSC}}
\def\DKdc{\overline{D}_{\KK}}
\def\DdcSX{\overline{D}_{\SSX}}
\def\DdcSC{\overline{D}_{\SSC}}
\def\DdcK{\overline{D}_{\KK}}
\def\DSX{D_{\SSX}}
\def\DSC{D_{\SSC}}
\def\DK{D_{\KK}}
\def\Dab{{D}_{a}^{b}}
\def\DKK{{D}_{\KK}^{\KK}}
\def\DKX{{D}_{\KK}^{\SSX}}
\def\DKC{{D}_{\KK}^{\SSC}}
\def\DXK{{D}_{\SSX}^{\KK}}
\def\DXX{{D}_{\SSX}^{\SSX}}
\def\DXC{{D}_{\SSX}^{\SSC}}
\def\DCK{{D}_{\SSC}^{\KK}}
\def\DCX{{D}_{\SSC}^{\SSX}}
\def\DCC{{D}_{\SSC}^{\SSC}}
\def\DdcKK{\overline{D}_{\KK}^{\KK}}
\def\DdcKX{\overline{D}_{\KK}^{\SSX}}
\def\DdcKC{\overline{D}_{\KK}^{\SSC}}
\def\DdcXK{\overline{D}_{\SSX}^{\KK}}
\def\DdcXX{\overline{D}_{\SSX}^{\SSX}}
\def\DdcXC{\overline{D}_{\SSX}^{\SSC}}
\def\DdcCK{\overline{D}_{\SSC}^{\KK}}
\def\DdcCX{\overline{D}_{\SSC}^{\SSX}}
\def\DdcCC{\overline{D}_{\SSC}^{\SSC}}
\def\Ddcab{\overline{D}_{a}^{b}}
\def\mDt{\mathcal{D}_{t_0}}
\def\Dt{D_{t_0}}
\def\mESX{\mathcal{E}_{\SSX}}
\def\mESC{\mathcal{E}_{\SSC}}
\def\mEK{\mathcal{E}_{\KK}}
\def\mL{\mathcal{L}}
\def\mZ{\mathcal{Z}}
\def\ESX{E_{\SSX}}
\def\ESC{E_{\SSC}}
\def\EK{{E}_{\KK}}
\def\EXX{{E}_{\SSX}^{\SSX}}
\def\EXp{{E}_{\SSX}^{\bullet}}
\def\EpX{{E}_{\bullet}^{\SSX}}
\def\Lp{L^{\bullet}}
\def\mESX{\mathcal E_{\SSX}}
\def\mESC{\mathcal E_{\SSC}}
\def\mEK{{\mathcal E}_{\KK}}
\def\mEXX{{\mathcal E}_{\SSX}^{\SSX}}
\def\mEXp{{\mathcal E}_{\SSX}^{\bullet}}
\def\mEpX{{\mathcal E}_{\bullet}^{\SSX}}
\def\mLp{{\mathcal L}^{\bullet}}
\def\mEt{\mathcal{E}_{t_0}}
\def\Et{E_{t_0}}
\def\rhotl{\rho_{3\ell}}
\def\MGP{\mathbb M}
\def\DGP{d_{\text{GP}}}
\newcommand\Set{\mathrm{Set}}
\newcommand\Supp{\mathrm{Supp}}
\newcommand\X{\mathcal{X}}
\renewcommand\O{\mathcal{O}}
\newcommand\bX{\bm{\mathcal{X}}}
\newcommand\jump{\mathrm{jp}}
\def\E{\mathfrak E}
\def\Ind{\mathbb{1}}
\def\bbR{\mathbb{R}}
\DeclareMathOperator{\Gr}{Gr}
\DeclareMathOperator{\Arg}{Arg}
\newcommand{\Exc}{ \scalebox{1.1}{$\mathfrak{e}$}}
\title[Scaling limits and split decomposition]{Scaling limit of graph classes through split decomposition}
\author[F. Bassino]{Frédérique Bassino}
       \address[FB]{Université Sorbonne Paris Nord, LIPN, CNRS UMR 7030, F-93430 Villetaneuse, France}
       \email{bassino@lipn.univ-paris13.fr}
 \author[M. Bouvel]{Mathilde Bouvel}
   \address[MB]{Université de Lorraine, CNRS, Inria, LORIA, F-54000 Nancy, France}
   \email{mathilde.bouvel@loria.fr}
 \author[V. Féray]{Valentin Féray}
  \address[VF]{Université de Lorraine, CNRS, IECL, F-54000 Nancy, France}
  \email{valentin.feray@univ-lorraine.fr}
 \author[L. Gerin]{Lucas Gerin}
       \address[LG]{CMAP, \'Ecole Polytechnique, CNRS, Route de Saclay, 91128 Palaiseau Cedex, France}
       \email{gerin@cmap.polytechnique.fr}
 \author[A. Pierrot]{Adeline Pierrot}
 \address[AP]{LISN, Université Paris-Saclay, Bat. 650 Ada Lovelace, 91405 Orsay Cedex, France}
       \email{adeline.pierrot@lri.fr}
\keywords{Brownian Continuum Random Tree, Distance hereditary graphs, Split decomposition, Analytic combinatorics, Graph scaling limits}
\subjclass[2020]{60C05,05C80,05A16}
\begin{document}

\begin{abstract}
We prove that Aldous' Brownian CRT is the scaling limit, with respect to the Gromov--Prokhorov topology, of uniform random graphs in each of the three following families of graphs: distance-hereditary graphs, $2$-connected distance-hereditary graphs and $3$-leaf power graphs. 
Our approach is based on the split decomposition and on analytic combinatorics.\end{abstract}

\maketitle

\section{Introduction}

In the present article we obtain scaling limit results for large graphs taken uniformly at random in the class of distance-hereditary graphs (DH graphs for short) and in two interesting subclasses: $2$-connected distance-hereditary graphs and $3$-leaf power graphs.
In all cases, the limit is the celebrated Brownian continuum random tree (Brownian CRT for short).
We start by giving some background on these graph classes.

\subsection{Distance-hereditary graphs and interesting subclasses}
DH graphs are the connected graphs for which the distances in any connected induced subgraph are the same as in the original graph. 
They enjoy many other characterizations, for instance by avoidance of induced subgraphs.
Among other properties, they form a subclass of perfect graphs and have clique-width at most three. They have been widely studied 
in the algorithmic literature: in particular,
it has been proved that many NP-hard problems can be solved in polynomial time for DH graphs (see \emph{e.g.} \cite{CogisThierry}); additionally,
DH graphs can be recognized efficiently, both in the static and dynamic framework 
(see \cite{SplitTrees}, and references therein).

To establish such algorithmic properties, a key feature of distance-hereditary graphs
is that they are nicely decomposable for the so-called \emph{split decomposition}.
More recently, this split decomposition has also been used
to give precise enumerative results and sampling algorithms on the class of distance-hereditary graphs
and some of its subclasses \cite{ChauveFusyLumbroso,BL18}. 
The analysis of distance-hereditary graphs (and subclasses) via the symbolic method, as done by Chauve,
Fusy and Lumbruso \cite{ChauveFusyLumbroso} (and reviewed in \cref{Sec:CombiDH} below) is a starting point
for the present paper. 
More precisely, in our work, we aim at illustrating
the usefulness of the split decomposition (combined with symbolic and analytic combinatorics) to study large random DH graphs.

Let us comment on the choice of graph classes considered in this article, in addition to the DH graphs already discussed.
The class of $3$-leaf power graphs has been studied in \cite{SplitTrees} (resp.~
\cite{ChauveFusyLumbroso}) to illustrate the versatility of algorithmic (resp.~enumerative)
results obtained through the split decomposition.
It is therefore natural for us to use it to illustrate as well the versatility of
the probabilistic approach through the split decomposition.
Since $3$-leaf power graphs are defined via trees (see \cref{def:3Leaf}),
their convergence to an infinite tree might seem expected.
On the contrary, conditioning random DH graphs 
to be $2$-connected makes them further from being trees.
Our result indicates that, nevertheless, at the level of scaling limits,
$2$-connected DH graphs are tree-like and converge to the Brownian CRT.

Another motivation for considering $3$-leaf power graphs and $2$-connected DH graphs
is that, unlike unconstrained DH graphs, they do not form
what is called a {\em block-stable} class of graphs.
Indeed, such block-stable graph classes
have already been studied in the discrete probability literature 
\cite{DrmotaEtAlSubcritical,DrmotaNoy}.
In particular, a scaling limit result for random graphs in such classes 
(under an additional subcriticality hypothesis)
is provided in \cite{SubcriticalClasses}, covering the case of unconstrained DH graphs.
It is therefore important to show that our approach through split decomposition
works also for classes which are not block-stable; 
and an obvious way to obtain a class of graphs which is not block-stable is to impose the constraint of being $2$-connected.

\subsection{The results}
A standard question in the theories of random trees, random maps and more recently
random graphs is to look for limits of random graph sequences, for various topologies. 
To this end, we consider graphs as discrete metric measure spaces.
A metric measure space (mm-space for short) is a triple $(X,d,\mu)$,
where $(X,d)$ is a complete and separable metric space 
and $\mu$ a probability measure on $X$. 
A finite connected graph can be seen as a mm-space, 
where $X$ is the vertex set of the graph,
$d$ the graph distance, and $\mu$ the uniform distribution on $X$.
In this setting scaling limits of random graphs correspond to the convergence of random mm-spaces, 
after renormalization of the distances.

For metric measure spaces there are two classical topologies used in the literature, the Gromov--Prohorov (GP) topology 
and the stronger Gromov--Hausdorff--Prohorov (GHP)  topology. 
Our result holds with respect to the GP topology (see \cref{Sec:CritereGP} for the definition). 
We believe that it could be extended to the GHP topology, using a criterion provided by Athreya-L\"{o}hr-Winter \cite{athreya2016gap}. 
However, this would likely require tools and methods very different from those of the present paper, and is therefore beyond its scope.

For $n\geq 1$ denote by $\mathcal{G}_{d}^{(n)}$ (resp. $\mathcal{G}_{2c}^{(n)}$, resp. $\mathcal{G}_{3\ell}^{(n)}$) the set of DH graphs (resp. $2$-connected DH graphs, resp.  $3$-leaf power graphs) with vertex set $[n]:=\{1,\dots,n\}$ (we say that such graphs have \emph{size} $n$).

We also denote by $ (\CRT,d_\infty,\mu_\infty)$ the Brownian CRT
 equipped with the mass measure $\mu_\infty$. 
The Brownian CRT has been introduced by Aldous in \cite{AldousCRTIII}
and is a now standard object in the discrete probability literature 
(for details and references, see \cref{Sec:CritereGP}). 

\begin{theorem}\label{th:GrosTheoreme}
For every family $f \in \{d,2c,3\ell\}$ and $n\geq 1$ let $\Gn_f$ be a uniform random graph in $\mathcal{G}_f^{(n)}$. Let $\mu_n$ be the uniform measure on the set of vertices $[n]$ and $d_{\Gn_f}$ be the graph distance in $\Gn_f$. Then there exists a constant $c_f>0$ such that the following convergence holds in distribution for the Gromov--Prohorov topology:
\begin{equation}\label{eq:ConvergenceGrosTheoreme}
\left([n],\frac{c_f}{\sqrt{n}}d_{\Gn_f}, \mu_n\right) \stackrel{n\to +\infty}{\to} (\CRT,d_\infty,\mu_\infty).
\end{equation}
\end{theorem}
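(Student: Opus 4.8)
The strategy is to prove \cref{th:GrosTheoreme} simultaneously for the three classes by exploiting their common combinatorial backbone: the split decomposition represents each graph in $\mathcal{G}_f^{(n)}$ as a tree (the \emph{split tree}) whose internal nodes are decorated by prime graphs (cliques, stars, and — for the unconstrained DH case — general prime graphs). The first step is to make precise the relationship between the graph distance $d_{\Gn_f}$ on $[n]$ and a natural distance on the associated split tree $\Tn_f$. One shows that, up to bounded additive and multiplicative distortion on \emph{most} pairs of leaves, $d_{\Gn_f}(u,v)$ is comparable to the number of internal nodes of the split tree on the path between the leaves labelled $u$ and $v$ — more precisely, each edge of the split tree contributes a bounded amount depending on the decorations it passes through (a clique contributes little, a star contributes according to which vertex of the star is used). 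The key probabilistic input is that, under the uniform distribution, a typical internal node has bounded degree and the decorations along a typical path behave nicely, so that $d_{\Gn_f}$ is, with high probability, within a constant factor of the tree metric on $\Tn_f$ for a $\mu_n\otimes\mu_n$-typical pair of leaves.

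**Reduction to random trees.** The second step is to identify the scaling limit of the split tree $\Tn_f$ itself. Here the analytic combinatorics developed by Chauve–Fusy–Lumbroso (reviewed in \cref{Sec:CombiDH}) enters: the generating functions of the split-tree structures for each of the three classes are algebraic (or at least amenable to singularity analysis), and one reads off that the class of split trees is, in the language of Bousquet-Mélou–type / simply-generated families, a critical tree-like structure with finite offspring variance. I would then invoke (a suitable version of) the convergence of conditioned simply-generated / Bienaymé–Galton–Watson trees — or of the more general framework for trees with a bounded number of decoration types — to conclude that $\Tn_f$, with edge lengths rescaled by $1/\sqrt{n}$ and the leaves weighted by $\mu_n$, converges in the Gromov–Prokhorov sense to a constant multiple of the Brownian CRT $(\CRT, d_\infty, \mu_\infty)$. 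This gives the theorem for a \emph{tree} metric; one has to be a little careful that the limiting measure is the mass measure on the CRT and not, say, a measure concentrated on leaves — but since in the split tree the labelled vertices $[n]$ sit at the leaves and their number is proportional to the total size, the empirical leaf measure converges to $\mu_\infty$.

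**Putting it together.** The third step combines the previous two: since Gromov–Prokhorov convergence is characterised by the convergence of the distance matrix of a finite i.i.d.\ sample (see \cref{Sec:CritereGP}), it suffices to show that for $k$ leaves $V_1,\dots,V_k$ sampled i.i.d.\ from $\mu_n$, the rescaled matrix $\big(\tfrac{c_f}{\sqrt n} d_{\Gn_f}(V_i,V_j)\big)_{i,j}$ converges in distribution to $\big(d_\infty(\bar V_i,\bar V_j)\big)_{i,j}$ where $\bar V_i$ are i.i.d.\ $\mu_\infty$. By Step 1 this matrix is, with probability $\to 1$, within $o(\sqrt n)$ (additively) and $(1+o(1))$ (multiplicatively) of a fixed constant times the tree-distance matrix on the sampled leaves of $\Tn_f$; by Step 2 the latter converges to the CRT distance matrix; a calibration of the constants then fixes $c_f$. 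One last point: a finite i.i.d.\ sample is typically a set of $k$ leaves in "general position", so the bad pairs excluded in Step 1 are avoided with high probability, which is exactly what makes the GP topology (rather than GHP) the natural setting — controlling \emph{all} pairs simultaneously, including atypical ones near high-degree nodes, is what one would need for GHP and is deliberately left aside.

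**Main obstacle.** I expect the crux to be Step 1: the honest comparison between the graph metric and the split-tree metric. The split decomposition is defined via a sequence of "simple decompositions", and reconstructing $d_{\Gn_f}$ from the split tree requires understanding how shortest paths traverse the star- and clique-decorations — in particular, a path through a star node may or may not cost an extra edge depending on whether it enters/exits through the center or a leaf of the star, and chains of stars can create short-cuts. Quantifying that, for a uniformly random graph, these effects only perturb typical distances by a bounded factor (so that the $\sqrt n$-order asymptotics are governed purely by the tree shape) is the delicate analytic/probabilistic heart of the argument, and is presumably where the bulk of \cref{Sec:CombiDH} and the subsequent sections are spent.
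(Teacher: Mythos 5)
Your proposal is essentially the ``natural alternative strategy'' that the paper explicitly considers in a Remark at the end of the introduction and deliberately does \emph{not} follow. The paper's actual proof is more direct: it first establishes an \emph{exact} (not approximate) dictionary between graph distances and the split tree, namely $d_G(\ell,\ell')=\jump(\tau,\ell,\ell')+1$ where $\jump$ counts the nodes along the tree path at which the entering and leaving marker vertices are non-adjacent (\cref{lem:dG}); then, instead of comparing the graph to the tree metric, it encodes the jump counts along the $2k-1$ branches of the subtree induced by $k$ marked leaves directly into a multivariate generating function $D_{t_0}(z,u_0,\dots,u_{2k-2})$, solves the underlying system explicitly, and applies a Semi-large powers Theorem to obtain a local limit theorem for the rescaled jump vector, which yields precisely the Rayleigh-type densities characterising CRT distance matrices (\cref{lem:distances_CRT}). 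Your route buys familiarity (GW-tree invariance principles), the paper's route buys explicit constants and sidesteps tree-scaling-limit machinery altogether, and the authors argue the latter is ``more direct and more original.''

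The gaps in your sketch are exactly the two obstacles the paper names. First, in Step 2 you ``invoke a suitable version'' of conditioned-GW convergence, but the trees here are multitype, are conditioned on the number of \emph{leaves} rather than vertices, and in the $3$-leaf-power case the defining system \eqref{eq:Specif3Leaf} is not irreducible; no off-the-shelf theorem covers this, and you give no argument to close it. Second, in Step 1 you assert that $d_{\Gn_f}$ is ``within a constant factor of the tree metric with high probability,'' but the exact relationship is through jump counts, and whether a path of tree-length $L$ has $\approx cL$ jumps is a law-of-large-numbers statement about the random sequence of types along a long path to a branching or marked node; this is a nontrivial multitype local-limit/renewal problem, not a deterministic bounded-distortion fact, and it is precisely the analysis you would have to supply. (Minor but worth flagging: for DH graphs the split decorations are \emph{only} cliques and stars, never general prime graphs — that is the whole point of the clique-star tree characterisation.) In short, your plan is not wrong in spirit, but it defers the two genuinely hard steps, and the paper chose a different decomposition of the problem precisely to avoid having to do them.
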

Constants in \cref{eq:ConvergenceGrosTheoreme} are explicit: namely,
\begin{align*}
c_d&= \frac{\sqrt{2}}{\gamma_H} \approx 0.3602 \text{ \quad where } \gamma_H \text{ is defined in \cref{eq:gamma_H} p.~\pageref{eq:gamma_H},}\\
c_{2c}&= \frac{\sqrt{2}}{\gamma_{H,2c}} \approx 0.1885 \text{ \quad where } \gamma_{H,2c} \text{ is defined in \cref{eq:gamma_H_2c} p.~\pageref{eq:gamma_H_2c},}\\
c_{3\ell}&=\frac{\sqrt{2}}{\gamma_{E,3\ell}} \approx 0.9266 \text{ \quad where } \gamma_{E,3\ell} \text{ is defined in \cref{eq:gamma_E3l} p.~\pageref{eq:gamma_E3l}}.
\end{align*}

\cref{fig:simulation,fig:simulation2c,fig:simulation3l} show two realizations of uniform distance-hereditary graphs
with  a few hundred vertices, respectively in the unconstrained, $2$-connected and 3-leaf power graph cases.
\begin{figure}[htbp]
\begin{center}
\includegraphics[width=7cm]{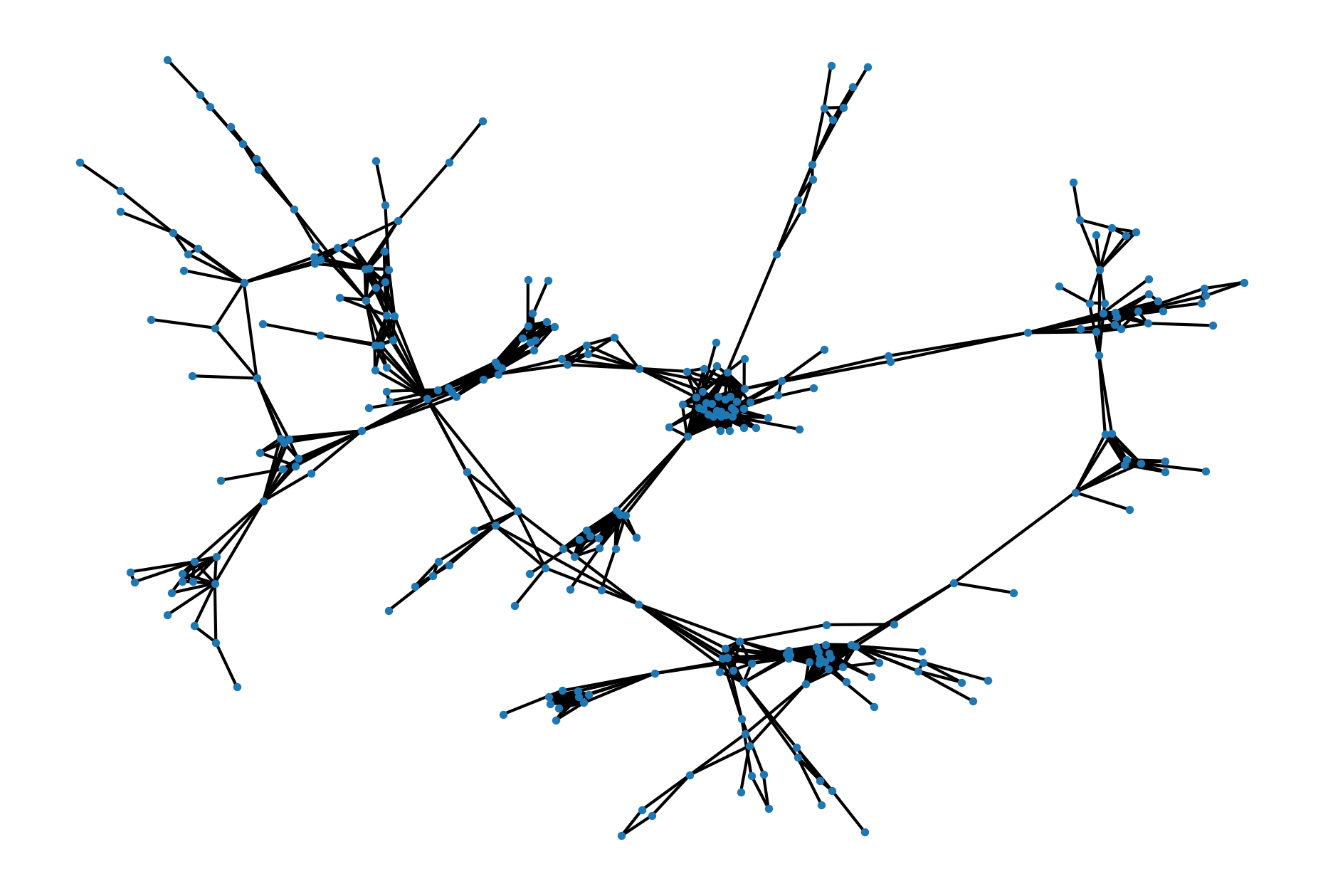}
\hspace{1cm}
\includegraphics[width=7cm]{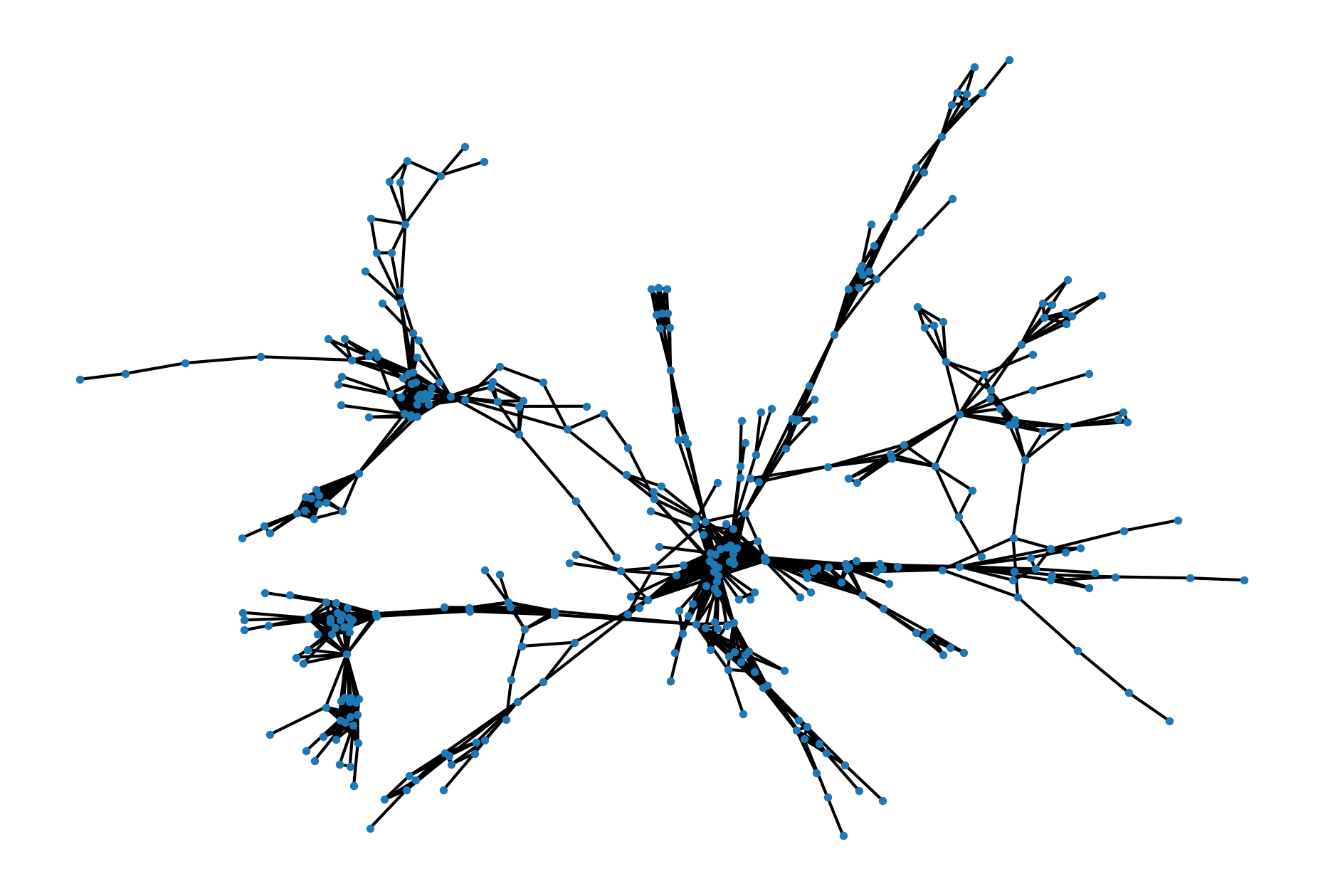}
\end{center}
\caption{Two samples of uniform random DH graphs of respective sizes $n=290$ and $n=388$. Both graphs were generated with a Boltzmann sampler (see \cite{Boltzmann}) using the combinatorial specification given in  \cref{eq:SpecifDH} p.~\pageref{eq:SpecifDH} and plotted with \texttt{python} library \texttt{networkx}.}
\label{fig:simulation}
\end{figure}

\begin{figure}[htbp]
\begin{center}
\includegraphics[width=7cm]{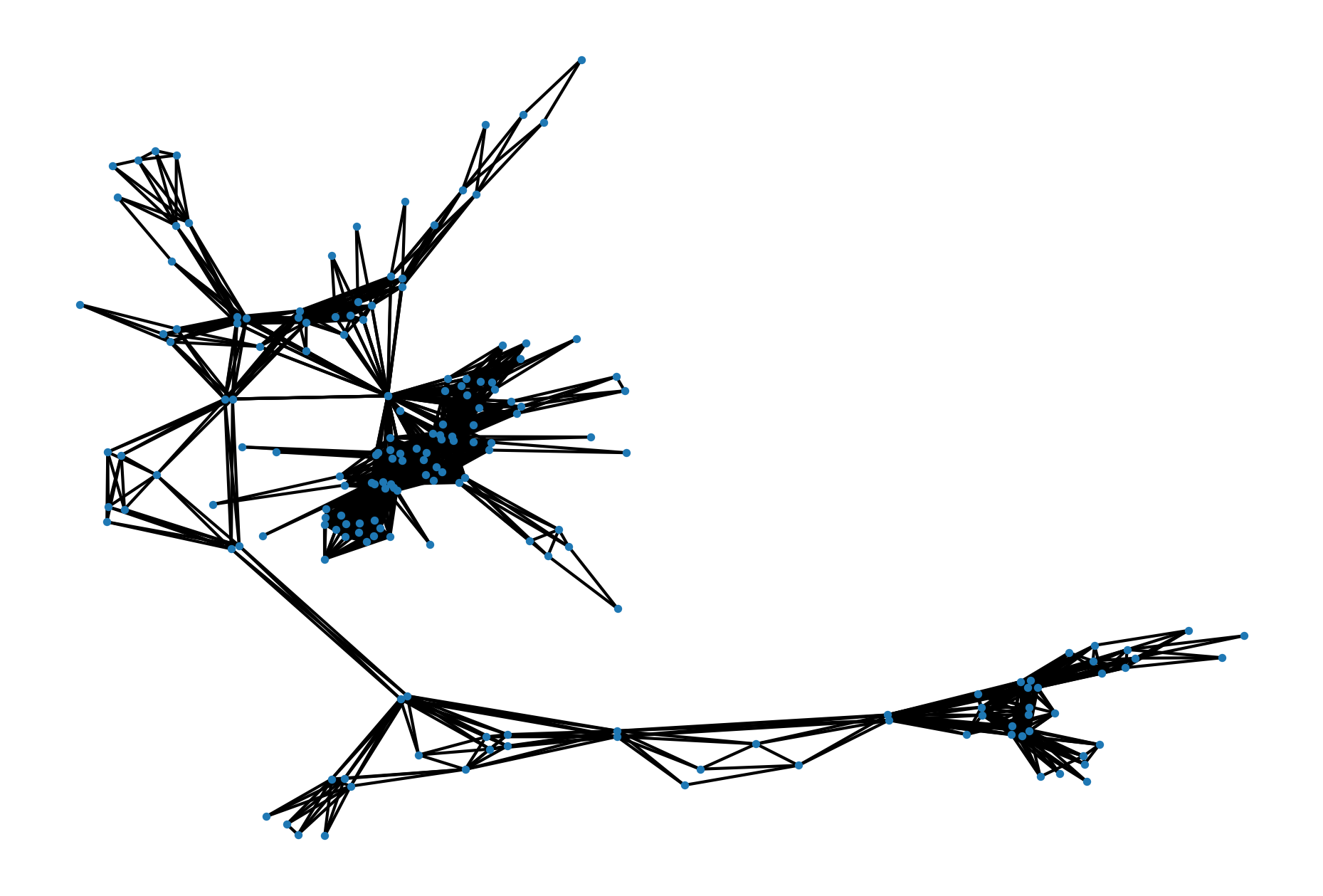}
\hspace{1cm}
\includegraphics[width=7cm]{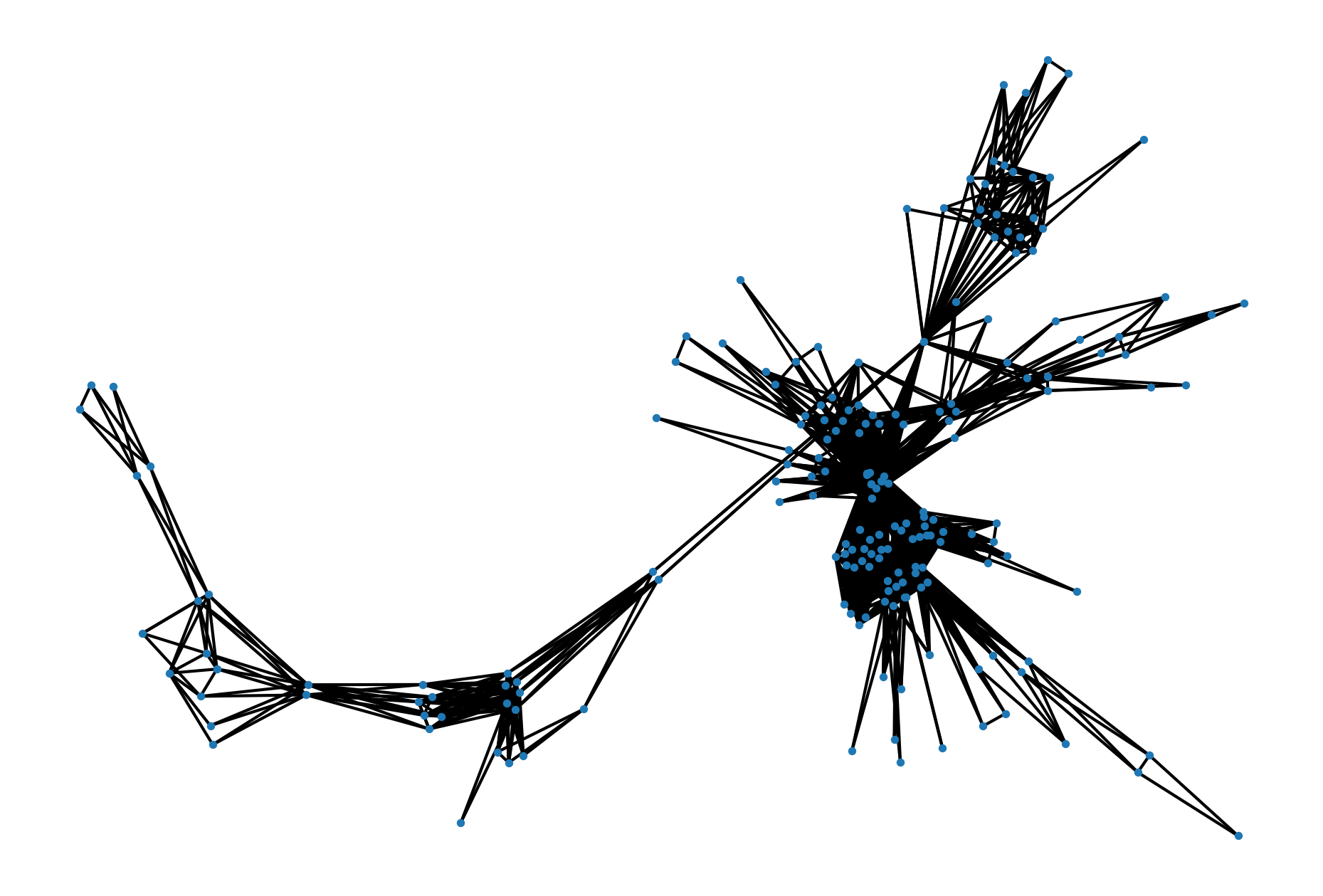}
\end{center}
\caption{Two samples of uniform random $2$-connected DH graphs of respective sizes $n=186$ and $n=197$. Graphs were generated with the combinatorial specification given in  \cref{eq:SpecifDH2c} p.~\pageref{eq:SpecifDH2c}.}
\label{fig:simulation2c}
\end{figure}

\begin{figure} [htbp]
\begin{center}
\includegraphics[width=7cm]{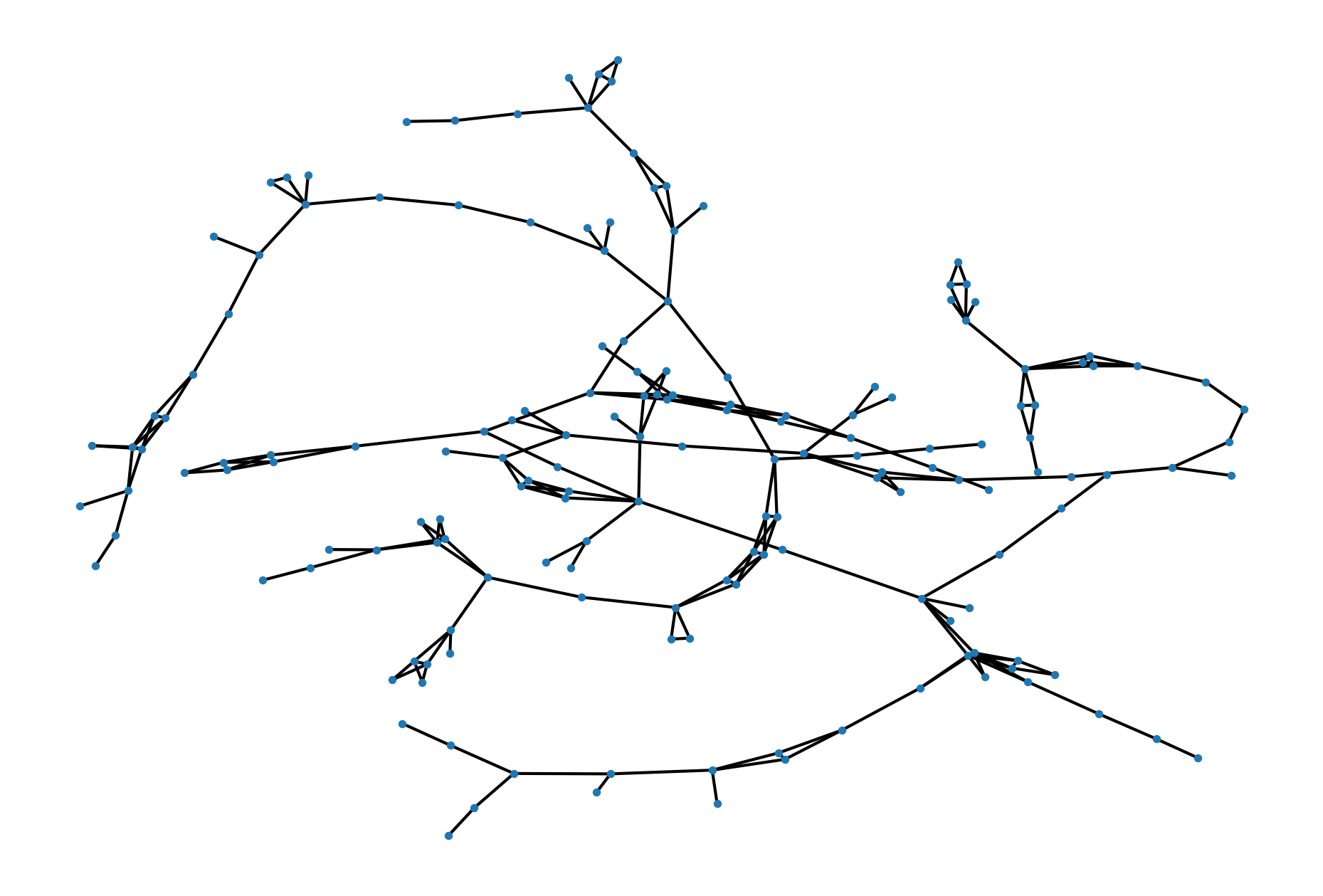}
\hspace{1cm}
\includegraphics[width=7cm]{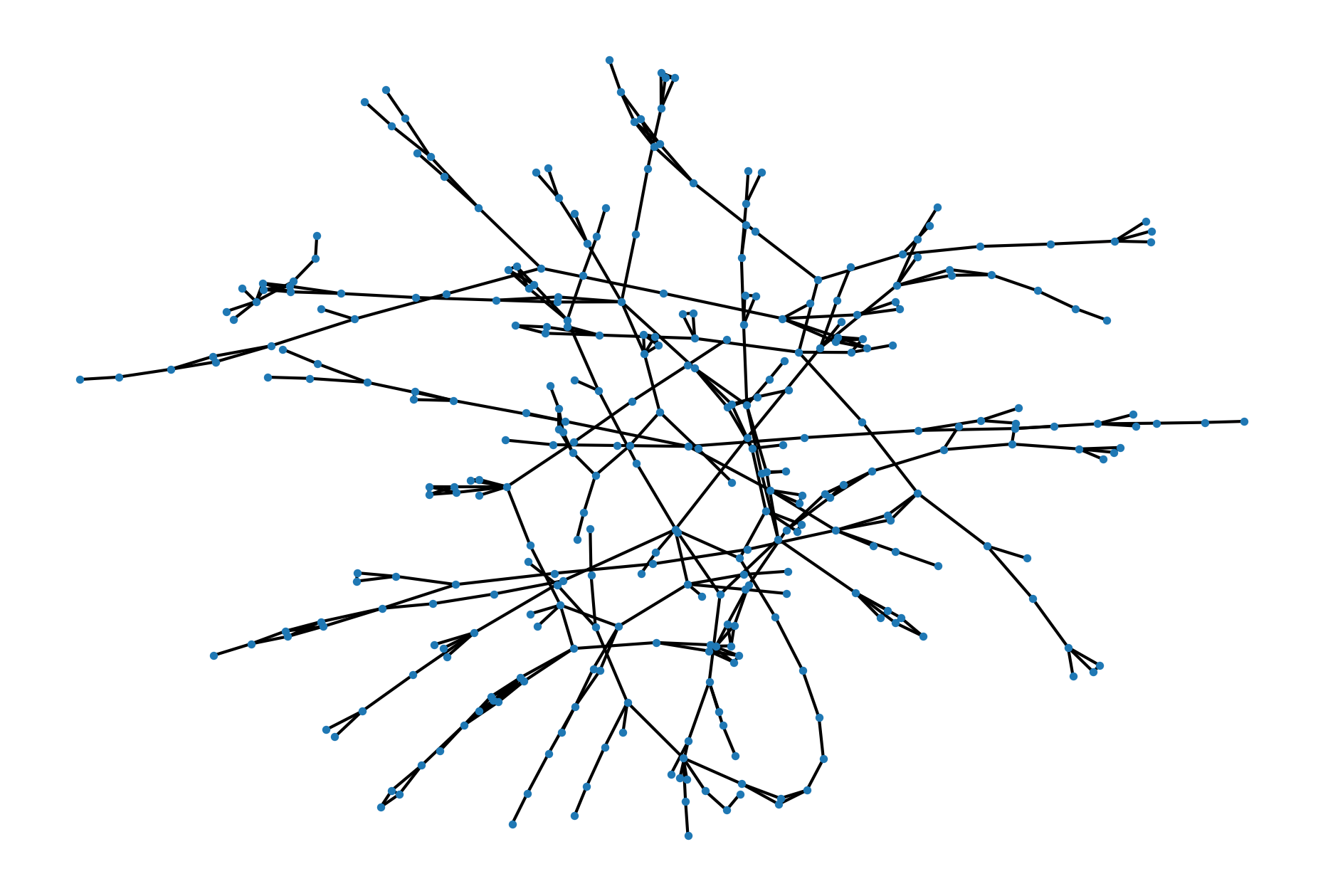}
\end{center}
\caption{Two samples of uniform random $3$-leaf power graphs of respective sizes $n=170$ and $n=231$. Graphs were generated with the combinatorial specification given in  \cref{eq:Specif3Leaf} p.~\pageref{eq:Specif3Leaf}.}
\label{fig:simulation3l}
\end{figure}

As mentioned above, in the case $f=d$ ({\it i.e.}~random unconstrained DH graphs),
\cref{th:GrosTheoreme} is not new. Indeed, DH graphs
form a subcritical block-stable class of graphs,
and it is proved in \cite{SubcriticalClasses} that uniform random graphs 
in such classes converge to the Brownian CRT\footnote{In \cite{SubcriticalClasses},
the convergence is proven only for the Gromov-Hausdorff (GH) topology, which is incomparable with
the GP topology we use here. We believe however that without much further effort,
their argument in fact proves convergence in the stronger GHP topology, see \cref{Sec:AppendixGH-GHP} for details.}.
On the contrary, $3$-leaf power graphs and $2$-connected DH graphs
are not block-stable graph classes, and \cref{th:GrosTheoreme} is new in these cases.
The stronger connectivity of $2$-connected DH graphs is reflected 
in the value of the renormalizing constant, which is smaller in the $2$-connected case
than in the unconstrained and $3$-leaf power cases.

We can restate  \cref{th:GrosTheoreme} in more concrete terms, 
which actually describe how we intend to prove \cref{th:GrosTheoreme}. 
It is known (see \cite{GromovProhorov} or \cref{Sec:CritereGP} below)
that  convergence in distribution in the Gromov--Prohorov sense is equivalent to the convergence in distribution of the relative distances between $k$ uniform vertices in the graph, for every $k$. For $k=2$ \cref{th:GrosTheoreme} says that if ${\mathbf v}_0,{\mathbf v}_1$ are uniform i.i.d.~vertices in $\Gn_f$ then
\begin{equation}\label{eq:Fonction2points}
\frac{c_f}{\sqrt{n}}d_{\Gn_f}({\mathbf v}_0,{\mathbf v}_1) 
\xrightarrow[n\to+\infty]{(d)} d_\infty(v_0,v_1)
\end{equation}
where $v_0,v_1$ are independent and $\mu_\infty$-distributed in $\CRT$. It turns out that the random variable  $d_\infty(v_0,v_1)$ is known to follow the Rayleigh distribution, \emph{i.e.} has density
$xe^{-x^2/2}$ on $\bbR_+$. More generally \cref{th:GrosTheoreme} amounts to saying that \eqref{eq:Fonction2points} holds jointly for $k$ uniform i.i.d. vertices in $\Gn_f$. 
The joint limiting distribution, \emph{i.e.}~the distribution of the distances between $k$ random
points in the CRT, is given below in \cref{lem:distances_CRT} (see also \cite{AldousCRTIII}).

We finish by discussing how our result fits in the literature on  convergence of discrete graph models to the CRT.
It is now well established that the Brownian CRT is the universal limit of many important families of random trees, see, \emph{e.g.}, \cite{LeGall}.
In addition, a few families of graphs which are \emph{not} trees are also known to converge towards the CRT, although such results are less common in the literature. We can cite
some models of random planar maps \cite{AlbenqueMarckert,Bettinelli,Caraceni,JansonStefansson},  some models of  random dissections \cite{RandomDissections},  and random graphs in subcritical \emph{block-stable} graph classes \cite{SubcriticalClasses}, as mentioned above.
Our paper exhibits two new families of nontree graphs classes converging to the CRT (and an alternative proof for a third class).

\subsection{Proof strategy}
As indicated above, one goal of this paper is to investigate 
the possibility of establishing scaling limit results for graphs
using the split decomposition.
In this regard, it is a natural continuation of a series of papers,
using other tree decompositions to obtain limiting results for combinatorial objects:
the substitution decomposition for permutations \cite{Nous1,Nous2,Nous3,SubsClosedRandomTrees},
and the modular decomposition for graphs \cite{NousCographes,BCographes}.
In most of these papers, the general proof strategy is the following.
First, we use some criteria to characterize the convergence of our combinatorial objects $G_n$
(either in the permuton or graphon sense) as the
convergence of the density of every substructure $H$ (pattern or induced subgraph) 
induced by $k$ random elements in our random permutation or random graph.
Then, fixing a substructure $H_0$ of size $k$, we study the combinatorial class of objects
with $k$ marked elements inducing that substructure $H_0$.
From this, we use singularity analysis to compute the asymptotic density of $H_0$
in a random object $G_n$.

The general strategy used in this paper is similar, albeit with important novelties.
As said above, the Gromov--Prohorov convergence is equivalent to the convergence, 
for all $k$, of the matrix of the distances between $k$ uniform random vertices in the graph.
This criterion resembles those for permuton and graphon convergence,
except that, for a fixed $k$, distance matrices live in a continuous space 
(real-valued $k \times k$ matrices), while patterns or induced subgraphs belong to a finite set.
Hence, to prove Gromov--Prohorov convergence, it is not possible
to simply consider the probability that the distance matrices are equal to a given matrix,
and study its asymptotics through analytic combinatorics.
To overcome this difficulty, we need to consider multivariate generating series,
where the additional parameters encode various distances in the graph induced by $k$ random points.
Then, instead of the classical transfer theorem, 
we use a slightly generalized version of the Semi-large powers Theorem
(see \cref{Sec:AppendixSemiLarge});
this theorem is known to explain from an analytic point of view the appearance
of the Rayleigh distribution, it is therefore not surprising that we use it here.

As a final note, let us mention that we are not aware of other works,
where convergence to the Brownian CRT is proved through the same set of tools.
We hope that this method will prove useful in other contexts in the future.

\begin{remark}
A natural alternative strategy to prove our main result would be the following:
first prove that the split decomposition tree $\Tn_f$ associated with $\Gn_f$ tends to the CRT,
and then prove that $\Gn_f$ and $\Tn_f$ are close, up to some scaling factor, for the GP topology.
This is in essence the strategy used in \cite{SubcriticalClasses} for subcritical block-stable
classes of graphs, except that the block-decomposition tree is used instead of the split decomposition tree.
There are however important (though not necessarily impossible to overcome) difficulties
to follow this route in our case.

First, the split decomposition trees $\Tn_f$ associated to our three models
can be represented as multitype Galton-Watson trees conditioned to having a given
number of leaves (as witnessed by the systems of equations \eqref{eq:SpecifDH},
\eqref{eq:SpecifDH2c} and \eqref{eq:Specif3Leaf}).
Convergence results to the CRT for conditioned multitype Galton-Watson trees
are available in the literature (see, \emph{e.g.}, \cite{miermont2008multitype}).
However, such results are usually obtained for trees conditioned to having
a given number of \emph{vertices},
and in the irreducible case.
Here we want to condition on the number of leaves,
and, in one of our models, namely for 3-leaf power graphs,
the system of equations defining the class
is not irreducible, see \cref{eq:Specif3Leaf}.
Therefore proving the convergence of the split decomposition trees
to the CRT would need some work on models of random trees.

A second difficulty is that the convergence of the split decomposition trees
does not imply directly the convergence of the associated graphs.
For this, we would need to prove that distances in the graph are close, up to a constant factor,
to that in the tree. But distances in the graph are determined by the decoration
of vertices in the split decomposition (see \cref{ssec:jumps}).
One would therefore need to understand the distribution of such decorations
(\emph{i.e.}~of types in our multi-type model) on paths between marked leaves and branching
points in split decomposition trees. Again, this might be feasible but certainly requires work.

We have preferred to develop an approach via analytic combinatorics, as explained above,
which is in some sense more direct and more original.
\end{remark}

\subsection{Outline of the paper}
In order to simplify the presentation of the proofs we chose to focus first on the class of 
unconstrained DH graphs. 
We explain later (in \cref{Sec:2connectedDH,Sec:3leaf}) how to adapt the result
 to $2$-connected DH graphs and to $3$-leaf power graphs.
\begin{itemize}
  \item In \cref{Sec:CritereGP} we state a criterion for the convergence towards the Brownian CRT w.r.t.~the Gromov--Prohorov topology. This criterion essentially follows from \cite{GromovProhorov,loehr2013equivalence} and from Aldous' construction of the Brownian CRT \cite{AldousCRTIII}.
\item In \cref{Sec:CombiDH} we give the necessary background of graph theory. We will see that there is a correspondence between DH graphs and certain \emph{clique-star trees}. \cref{Sec:CombiDH} ends with exact and asymptotic enumerative formulas for DH graphs. 
  The material of this section is mainly taken from papers of Gioan--Paul \cite{SplitTrees}
  and Chauve--Fusy--Lumbroso \cite{ChauveFusyLumbroso}.
\item \cref{Sec:TreesWithMarkedLeaves} is devoted to the combinatorial and analytic study of clique-star trees with  a marked leaf. These are building blocks for the combinatorial decomposition
  of trees with several marked leaves done in \cref{Sec:TreesWithInducesSubtrees},
  keeping track of distances in the graph between the corresponding vertices.
The convergence of a uniform random unconstrained DH graph to the Brownian CRT,
\emph{i.e.}~the case $f=d$ in \cref{th:GrosTheoreme}, is proved at the end 
of \cref{Sec:TreesWithInducesSubtrees}.
\item In \cref{Sec:2connectedDH,Sec:3leaf} we extend the main result to $2$-connected DH graphs and to $3$-leaf power graphs, respectively.
\item In \cref{Sec:AppendixSemiLarge} we give a complete proof of a (minor) generalization of the \emph{Semi-large powers Theorem} (\cite[Theorem IX.16]{Violet}),
  which is central in our proofs. 
\item  \cref{Sec:AppendixDH_are_stable} and  \cref{Sec:AppendixGH-GHP} clarify the relation between the present work and the paper  \cite{SubcriticalClasses}.
\end{itemize}
\medskip

{\em Note:} 
Some computations in the proofs of our main results require the use
of a computer algebra system. To help the reader, we provide a companion
Maple worksheet, both in mw and pdf formats.
These files are embedded into this pdf
(alternatively you can download the source of the arXiv version to get the files).

\section{Toolbox: the Gromov--Prohorov topology and the Brownian CRT}\label{Sec:CritereGP}

\subsection{A criterion for Gromov--Prohorov convergence}

\begin{definition}
A metric measure  space (called mm-space for short) is a triple
$(X,d,\mu)$, where $(X,d)$ is a complete and separable metric space 
and $\mu$ a probability measure on $X$.
\end{definition}

{\em Gromov--Prohorov distance.}
We let $\MGP$ be the set of all mm-spaces\footnote{To avoid Russell's paradox, throughout the section,
we actually take the set of mm-spaces {\em whose elements are not themselves metric spaces}.}, modulo the following relation:
$(X,d,\mu) \sim (X',d',\mu')$ if there is an isometric embedding  $\Phi:X \to X'$
such that $\Phi_*(\mu)=\mu'$.
Note that $\Phi$ does not need to be surjective, so that we need to consider
the transitivity and reflexivity closure of that relation.
In particular one always has $(X,d,\mu) \sim (\Supp(\mu),d,\mu)$, 
where $\Supp(\mu)$ is the support of $\mu$.

On the set $\MGP$, one can define a distance as follows.
First we recall the notion of Prohorov distance:
for Borel probability measures $\mu$ and $\nu$ on the same metric space $Y$, we set
\[d_P(\mu,\nu)=\inf\big\{\eps>0: \, \mu(A)\le \nu(A^\eps)+\eps \text{ and }\nu(A)\le \mu(A^\eps)+\eps
\text{ for all measurable sets }A\subseteq Y\big\},\]
where $A^\eps$ is the $\eps$-halo of $A$, {\it i.e.} the set of all points at distance at most $\eps$ of $A$.
This distance metrizes the weak convergence of probability measures.
Then, given two mm-spaces $(X,d,\mu)$ and  $(X',d',\mu')$, we set
\[\DGP\big( (X,d,\mu), (X',d',\mu') \big)
= \inf_{(Y,d_Y),\Phi,\Phi'}  d_P\big( \Phi_*(\mu), \Phi'_*(\mu') \big),\]
where the infimum is taken over isometric embeddings $\Phi:X \to Y$ and $\Phi':X' \to Y$
into a common metric space $(Y,d_Y)$.
One can prove \cite[Section 5]{GromovProhorov} that $\DGP$ is a distance on $\MGP$ 
and that the resulting metric space $(\MGP,\DGP)$ is complete and separable.

\medskip

{\em Criterion of convergence.}
Let $\mathcal X=(X,d,\mu)$ be an mm-space and fix an integer $k \ge 0$.
We let $x_1, \dots, x_k$ be i.i.d. random elements of $X$, with law $\mu$.
We record their pairwise distances in a matrix, namely we set
\[A_k^{\X} = \big( d(x_i,x_j) \big)_{1\le i,j \le k}.\]
This is a random $k \times k$ square matrix, whose law depends on the mm-space $\X$ 
we start with. 

We will also consider random mm-spaces, which we denote with boldface. 
In this case, {\em conditionally on $\bX=\bm{(X,d,\mu)}$},
 we let $x_1, \dots, x_k$ be i.i.d. random elements of $\bm X$,
with law $\bm \mu$ and we define as above $A_k^{\bX}$ to be their distance matrix.

We have the following characterization of convergence in distribution in $(\MGP,\DGP)$,
essentially given in \cite{GromovProhorov,loehr2013equivalence}.
\begin{theorem}\label{Th:CritereGP}
Let ${\bX_n}=\bm{(X_n,d_n,\mu_n)}$ for any $n\geq 1$ and ${\bX}=\bm{(X,d,\mu)}$ be random mm-spaces.
Then the following properties are equivalent:
\begin{enumerate}
\item $\bX_n$ converges in distribution to $\bX$ for the Gromov--Prohorov distance $\DGP$ as $n \to +\infty$. 
\item For any fixed $k\ge 1$, the random distance matrix $A_k^{\bX_n}$ converges in distribution to $A_k^{{\bX}}$ as $n$ tends to $+\infty$.
\end{enumerate}
\end{theorem}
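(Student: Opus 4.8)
The plan is to route both implications through the ``polynomials'' on $\MGP$: for a bounded continuous $F\colon\bbR^{k\times k}\to\bbR$ and an mm-space $\X=(X,d,\mu)$, put
\[
\Psi_F(\X)=\int_{X^k}F\big((d(x_i,x_j))_{1\le i,j\le k}\big)\,\mu^{\otimes k}(\mathrm dx_1\cdots\mathrm dx_k).
\]
By Fubini, for a random mm-space $\bX$ one has $\mathbb E[\Psi_F(\bX)]=\mathbb E[F(A_k^{\bX})]$, so item (ii) is exactly the convergence $\mathbb E[\Psi_F(\bX_n)]\to\mathbb E[\Psi_F(\bX)]$ for all such $F$ and all $k$, while item (i) is weak convergence of $\mathrm{law}(\bX_n)$ on $(\MGP,\DGP)$. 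Both directions are, as indicated in the statement, essentially contained in \cite{GromovProhorov,loehr2013equivalence}, and I would present the argument as follows.

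For (i) $\Rightarrow$ (ii) the only point to check is that each $\Psi_F$ is bounded and continuous on $(\MGP,\DGP)$; then (ii) is immediate from the definition of convergence in distribution. Boundedness is trivial, and for continuity it suffices to treat bounded Lipschitz $F$ (such functions being convergence-determining on $\bbR^{k\times k}$). Given $\X,\X'$ with $\DGP(\X,\X')<\eps$, I would embed both isometrically into a common space with Prohorov distance of the pushforwards $<\eps$, apply Strassen's theorem to obtain a coupling $(Z,Z')$ of these measures with $d(Z,Z')\le\eps$ except with probability at most $\eps$, and sample $k$ i.i.d.\ pairs from it; the two resulting distance matrices are distributed as $A_k^{\X}$ and $A_k^{\X'}$ and, off an event of probability $\le k\eps$, differ by at most $2\eps$ entrywise, whence $|\Psi_F(\X)-\Psi_F(\X')|=O(\eps)$.

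For (ii) $\Rightarrow$ (i) I would use a subsequence argument with two ingredients: (a) the sequence $(\mathrm{law}(\bX_n))_{n\ge1}$ is tight in the space of probability measures on $(\MGP,\DGP)$; and (b) the laws of the matrices $A_k^{\bX}$, $k\ge1$, determine the law of $\bX$ as a random mm-space. For (b), note that $\Psi_F\Psi_G=\Psi_{F\otimes G}$, so every joint moment of the bounded random vector $(\Psi_{F_1}(\bX),\dots,\Psi_{F_m}(\bX))$ is a function of those laws, hence so is its law; since the family $\{\Psi_F\}$ separates points of $\MGP$ (Gromov's reconstruction theorem) this determines $\mathrm{law}(\bX)$. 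Granting (a) and (b): by tightness any subsequence of $(\bX_n)$ has a weakly convergent sub-subsequence $\bX_{n_j}\to\bX_*$; the already proven implication (i) $\Rightarrow$ (ii) gives $A_k^{\bX_{n_j}}\to A_k^{\bX_*}$ while the hypothesis gives $A_k^{\bX_{n_j}}\to A_k^{\bX}$, so $A_k^{\bX_*}\overset{d}{=}A_k^{\bX}$ for every $k$, hence $\bX_*\overset{d}{=}\bX$; thus $\bX_n\to\bX$ in distribution.

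I expect the main obstacle to be ingredient (a). Here I would invoke the relative-compactness criterion for $\MGP$ from \cite{GromovProhorov}: a family of mm-spaces is relatively compact provided the laws of $d(x_1,x_2)$ are uniformly tight \emph{and} a uniform ``modulus of mass distribution'' condition holds, the latter ruling out escape of mass towards limit objects that fail to be separable (the prototype being $n$ equidistant points with uniform mass). The first condition follows in probability from tightness of $A_2^{\bX_n}$. For the second, which is the delicate part, I would express the mass of a small $\delta$-ball around a sampled point as a limit of empirical averages $\tfrac1{k-1}\sum_{j\ge2}\Ind[d(x_1,x_j)\le\delta]$ over $k\to\infty$ further sampled points, and control it uniformly in $n$ using convergence of $A_k^{\bX_n}$ for all $k$; this is the technical heart of \cite{GromovProhorov}, which I would reproduce only in outline. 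By contrast, the implication (i) $\Rightarrow$ (ii) is comparatively soft.
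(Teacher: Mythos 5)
Your proposal follows essentially the same route as the paper: both reduce the statement to the polynomial functions $\Psi_F$, proving (i)$\Rightarrow$(ii) from the fact that each $\Psi_F$ is bounded and continuous (which is the content of \cite[Theorem 5]{GromovProhorov}) and (ii)$\Rightarrow$(i) from the fact that the polynomials are convergence-determining (which the paper simply cites as \cite[Corollary 2.8]{loehr2013equivalence}). Where you differ is only in unpacking those citations rather than invoking them --- the Strassen coupling for continuity is a fine way to re-derive the first, and the tightness/subsequence/reconstruction argument is the standard way to re-derive the second --- and the one step you flag as only outlined (the modulus-of-mass-distribution tightness estimate) is precisely what the paper's citation of \cite[Corollary 2.8]{loehr2013equivalence} lets it bypass.
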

\begin{proof}
In \cite[Theorem 5]{GromovProhorov}, it is proved in the deterministic setting that convergence for Gromov--Prohorov distance
is equivalent to the convergence of the so-called {\em polynomial functions},
 {\it i.e.}~of bounded continuous functions of (the entries of) distance matrices.
 It is then observed in \cite[Corollary 2.8]{loehr2013equivalence} that polynomial functions
 are {\em convergence-determining}, {\it i.e.} one has convergence in distribution of random mm-spaces
 if the expectations of all polynomial functions converge.
 On the other hand since polynomial functions are the continuous bounded 
 functions of distance matrices,
 the convergence of expectations of polynomial functions is equivalent
  to the convergence in distribution of the distance matrices. This completes the proof.
\end{proof}

\subsection{Distance matrix of the CRT}\label{ssec:CRT}

The Brownian CRT $(\CRT,d_\infty,\mu_\infty)$ is a random variable taking values in the set of compact metric measure spaces (see \cite{AldousCRTIII}).

Informally, the mutual distances of $k$ points in $(\CRT,d_\infty,\mu_\infty)$ have the same distribution as the distances between the $k$ leaves of a uniform random \emph{$k$-proper tree} (defined below) in which edges have random length distributed according a multivariate Rayleigh distribution.
This actually characterizes the distribution of $\CRT$, as stated below. 

\begin{definition}
A $k$-proper tree $t_0$ is an (unrooted) nonplane tree with $k+1$ leaves
where each internal node has degree $3$.
One of the leaves is considered as the root-leaf ($\ell_0$) and the other leaves are identified with $\{\ell_1,\dots,\ell_k\}$.
\end{definition}

\begin{lemma}
\label{lem:distances_CRT}
For every $k\geq 2$ and every $k$-proper tree $t_0$, we fix a labeling of its edges $e_0,\dots ,e_{2k-2}$.

The distribution of $(\CRT,d_\infty,\mu_\infty)$ is characterized by the property that for every $k\geq 2$, 
if we take $v_0,\dots , v_k$ uniform and independent in $\CRT$ with distribution $\mu_\infty$, then 
\begin{equation}\label{eq:MarginCRT}
\bigg(d_\infty(v_i,v_j)\bigg)_{0\leq i,j\leq k}\stackrel{(d)}{=}\bigg(\sum_{r:\ e_r\in\mathcal{P}_{i,j}^{\bm{t}_0}} X_r\bigg)_{0\leq i,j\leq k},
\end{equation}
where the RHS is a random matrix whose distribution is defined as follows (where $(2k-3)!!$ is the product of all odd positive integers less than or equal to $2k-3$):
\begin{itemize}
\item  $\bm{t}_0$ is a uniform  $k$-proper tree;
\item $(X_0,\dots,X_{2k-2})$ have joint distribution
$$
(2k-3)!! \, \cdot \, \Sigma_i{x_i}\exp\left(-(\Sigma_i x_i)^2/2\right)\mathbf{1}_{x_0 ,\dots,x_{2k-2}>0}\ \mathrm{d}x_0\dots \mathrm{d}x_{2k-2}
$$
and are independent from $\bm{t}_0$;
\item The sum in \eqref{eq:MarginCRT} runs over the set of edges $e_r$ of the path $\mathcal{P}_{i,j}^{\bm{t}_0}$ joining leaves $\ell_i$ and $\ell_j$ in $\bm{t}_0$.
\end{itemize}
\end{lemma}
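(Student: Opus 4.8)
The plan is to reduce the statement to Aldous' classical description of the finite-dimensional marginals of the Brownian CRT and then to identify the resulting distribution with the right-hand side of \eqref{eq:MarginCRT}. Recall (see \cite{AldousCRTIII}, or the line--breaking construction in \cite{LeGall}) that if $v_0,\dots,v_k$ are i.i.d.\ $\mu_\infty$-points of the CRT, then the subtree $R_k$ of $\CRT$ spanned by $v_0,\dots,v_k$ is, almost surely, a tree with exactly $k+1$ leaves and all internal vertices of degree $3$; hence its combinatorial shape is a $k$-proper tree $\bm t_0$ (with leaf $\ell_i$ identified to $v_i$ and $\ell_0$ playing the role of the root-leaf). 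The distribution of the pair (shape of $R_k$, vector of its $2k-1$ edge lengths) is given by Aldous: the shape is uniform among the $(2k-3)!!$ $k$-proper trees, and, independently of the shape, the edge-length vector $(X_0,\dots,X_{2k-2})$ has the density stated in the lemma. Finally $d_\infty(v_i,v_j)$ is the length of the path from $\ell_i$ to $\ell_j$ inside $R_k$, which is exactly $\sum_{r:\, e_r\in\mathcal P^{\bm t_0}_{i,j}} X_r$. This establishes \eqref{eq:MarginCRT}.

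It then remains to argue the \emph{characterization} part, i.e.\ that no other random compact mm-space has these marginals. Here I would invoke \cref{Th:CritereGP} (the Gromov--Prohorov characterization): a random mm-space is determined in distribution by the laws of all its finite distance matrices $A_k^{\bX}$. Since \eqref{eq:MarginCRT} prescribes the law of $A_{k+1}^{\CRT}$ for every $k$ — equivalently the law of $A_m^{\CRT}$ for every $m\ge 1$, the small values $m\le 2$ being trivial (distance $0$ on the diagonal, Rayleigh off-diagonal for $m=2$) — any two random mm-spaces satisfying \eqref{eq:MarginCRT} have the same Gromov--Prohorov law. Hence the displayed property indeed characterizes the distribution of $(\CRT,d_\infty,\mu_\infty)$.

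The only genuinely non-routine point is the precise form of the joint law of the edge lengths of the spanning subtree, together with the independence from its combinatorial shape; this is the content of Aldous' description and I would simply quote it from \cite{AldousCRTIII} (it is the analytic origin of the multivariate Rayleigh density appearing in the statement). A minor bookkeeping issue is to check the normalization constant $(2k-3)!!$: it is exactly the number of $k$-proper trees (equivalently, the number of unrooted binary trees with $k+1$ labelled leaves), so that averaging the uniform shape against the common edge-length density yields a bona fide probability measure, consistently with Aldous' formulation. Everything else — the identification of $d_\infty(v_i,v_j)$ with a sum of edge lengths along a path, the a.s.\ degree-$3$ property of internal branch points — is standard for the CRT and requires no new argument.
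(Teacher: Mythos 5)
Your proposal is correct and takes essentially the same approach as the paper: the paper does not offer an independent proof but simply cites Aldous (specifically \cite[Lemma 21]{AldousCRTIII} for the existence and consistency of the prescribed finite-dimensional distance distributions, and \cite[Cor.~22]{AldousCRTIII} for the identification with the Brownian-excursion construction), and then recalls the excursion description for the reader's convenience. Your added argument for the \emph{characterization} half via \cref{Th:CritereGP} (two random mm-spaces with identical distance-matrix laws have the same Gromov--Prohorov distribution, e.g.\ by applying the theorem to a constant sequence) is a clean way to make explicit a uniqueness that the paper leaves implicit in the citation of Aldous.
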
 

Aldous proved that this object exists and indeed properly defines a random metric space 
\cite[Lemma 21]{AldousCRTIII}. The reader may be more familiar with an alternative and more constructive definition of the CRT which we briefly recall. 
Starting from a normalized Brownian excursion $\Exc$, 
$\CRT$ is defined as the quotient $[0,1]/\sim_{\Exc}$ where $\sim_{\Exc}$ is the ``gluing'' procedure which identifies any two points of $\Exc$ at the same height having only higher points of $\Exc$ between them (see~\cite[Section 2]{LeGall}). 
Aldous~\cite[Cor. 22]{AldousCRTIII} proved that both constructions coincide. 
Through the latter construction, the mass measure $\mu_\infty$ is defined as the push-forward of the Lebesgue measure by the quotient map associated with $\sim_{\Exc}$. 

\section{Combinatorial analysis of distance-hereditary trees}\label{Sec:CombiDH}

In this section, we first recall the encoding of distance-hereditary graphs by clique-star trees
(which is a special case of the encoding of general graphs by split decomposition trees).
This is done in \cref{sec: split} and largely follows \cite[Sections 2.1-2.2]{SplitTrees} (itself inspired by \cite{CunninghamSplit}).
We then explain how distances in a DH graph can be recovered from the associated clique-star tree (\cref{ssec:jumps}).
We could not find this result in the literature, though this might be known to experts.
The last two sections provide a combinatorial and analytic study
of the generating series of DH graphs (or rather of the associated trees);
this mainly follows the work of Chauve--Fusy--Lumbroso \cite{ChauveFusyLumbroso}.
This whole section can be seen as combinatorial preliminaries
for the proof of the convergence of unconstrained DH graphs to the Brownian CRT
(case $f=d$ in \cref{th:GrosTheoreme}).

\subsection{Clique-star trees} \label{sec: split}

\begin{definition}
A graph-decorated\footnote{In \cite{SplitTrees}, the term {\em graph-labeled tree} is used;
we prefer here to speak of {\em graph-decorated tree} to avoid
confusion with labeling in the sense of labeled combinatorial classes \cite{Violet},
a notion that we will use throughout the article.} tree is a (nonplane unrooted) tree $\tau$ in which every 
internal node $v$ of degree $k$ is decorated with
a graph $\Gamma_v$ with $k$ vertices;
moreover, for each $v$, we fix a bijection $\rho_v$ from the tree-edges incident to $v$ to the vertices of $\Gamma_v$. 
\end{definition}
We fix some terminology and conventions. 
To avoid confusion between decoration graphs $\Gamma_v$
and other graphs, we use the term \emph{decoration} for $\Gamma_v$ and {\em marker vertices} for its vertices.
An edge $e$ of $\tau$ between two nodes $v$ and $v'$
is sometimes seen as connecting the marker vertices $q=\rho_v(e)$ to $q'=\rho_{v'}(e)$.
In particular in graphical representations, we draw an edge $e$ of the tree between nodes $v$ and $v'$
from $q=\rho_v(e)$ to $q'=\rho_{v'}(e)$. 
When we refer to the bijection $\rho_v$, we say that an edge $e$ incident to $v$ is \emph{attached} to the corresponding marker vertex (say, $x$) of $\Gamma_v$. 
When $e$ is incident to $v$ and to a leaf $\ell$, we make a small abuse of notation by saying that $\ell$ is attached to $x$.

\medskip

Let $\tau$ be a graph-decorated tree and $\ell$, $\ell'$ be leaves of $\tau$.
We consider the (unique) path $p$ from $\ell$ to $\ell'$ in $\tau$. 
For any node $v$ on this path, we denote $e_{in}(v)$ (resp. $e_{out}(v)$) the edge of $p$ entering (resp. leaving) $v$.
Then $\ell'$ is said to be accessible from $\ell$ (or equivalently $\ell$ accessible from $\ell'$)
if, for every node $v$ on $p$,
the pair $\{\rho_v(e_{in}(v)),\rho_v(e_{out}(v))\}$ is an edge of the decoration $\Gamma_v$.
With this notion in hand, we can associate to $\tau$ a graph $\Gr(\tau)$, whose vertex set is
the leaf set of $\tau$, and where $\{\ell,\ell'\}$ is an edge in $\Gr(\tau)$
if and only if $\ell$ is accessible from $\ell'$ in $\tau$.
This construction is illustrated on~\cref{fig:Correspondence_TreeGraph}.
\medskip

\begin{figure}[htbp]
\includegraphics[width=10cm]{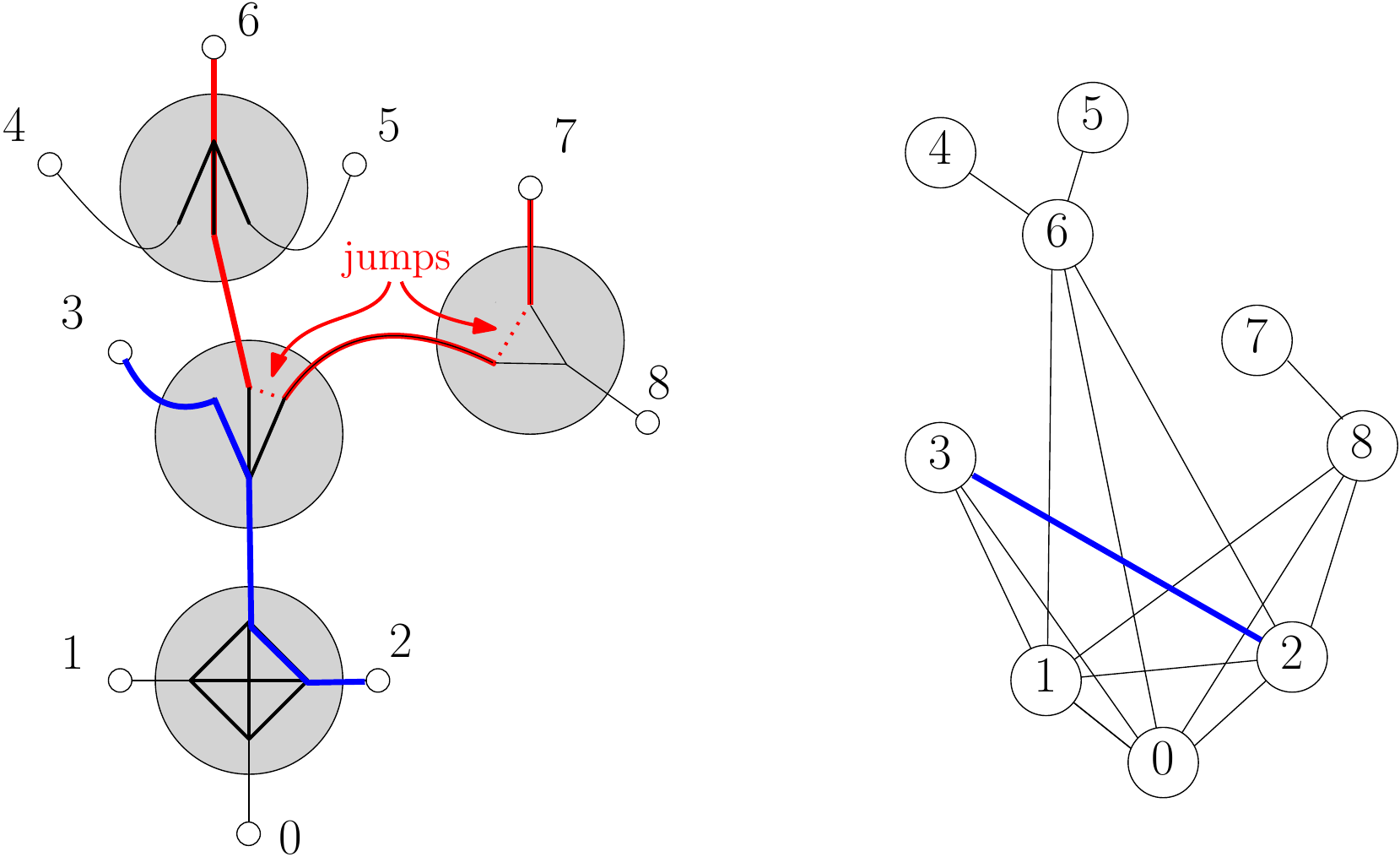}
\caption{Left: A clique-star tree $\tau$ with $n=9$ leaves drawn with its $4$ decorations. Right: The corresponding graph $\Gr(\tau)$.
To illustrate the construction of $\Gr(\tau)$, we have highlighted two pairs of leaves and the paths
between them. Following the blue path, we say that 3 is accessible from 2 in $\tau$;
accordingly $\{2,3\}$ is an edge in $\Gr(\tau)$.
On the opposite, $6$ is not accessible from $7$ in $\tau$;
accordingly $\{6,7\}$ is not an edge in $\Gr(\tau)$. (Jumps are defined in \cref{ssec:jumps}.)}
\label{fig:Correspondence_TreeGraph}
\end{figure}

In the sequel, we only consider graph-decorated trees $\tau$ 
where all decorations $\Gamma_v$ are either cliques or stars --
following \cite{ChauveFusyLumbroso}, we speak of {\em clique-star trees}. 
It is known (see~\cite[Section 3.1]{SplitTrees}) that 
the graphs which can be obtained as $\Gr(\tau)$ where $\tau$ is a clique-star tree,
are precisely the distance-hereditary graphs (DH graphs).
By convention the graph with a single vertex and the connected graph with two vertices
are DH graphs.

We note that a DH graph $G$ can possibly be obtained
as $\Gr(\tau)$ for several clique-star trees $\tau$.
Uniqueness can nevertheless be ensured adding extra conditions on $\tau$.
\begin{definition}\label{dfn:reduced_clique_star_tree}
A clique-star tree $\tau$ is called \emph{reduced} if it satisfies the following conditions:
\begin{enumerate}
\item every internal node $v$ has degree at least $3$;
\item no edge of $\tau$ connects two internal nodes
both decorated with cliques;
\item no edge of $\tau$ connects marker vertices $q$ and $q'$ 
where $q$ is the center of a star $\Gamma_v$ and $q'$ a leaf
of another star $\Gamma_{v'}$.
\end{enumerate}
\end{definition}
Then uniqueness follows directly from \cite[Theorem 2.9]{SplitTrees}
(which considers all graphs, not only DH graphs). Namely, the following holds. 
\begin{proposition} 
\label{prop:SplitTreeBij}
For every labeled DH graph $G$ of size at least $3$,
 there exists a unique reduced clique-star tree $\tau$
such that $G=\Gr(\tau)$.
\end{proposition}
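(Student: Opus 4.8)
The plan is to deduce \cref{prop:SplitTreeBij} directly from the general structure theory of split decomposition, namely from \cite[Theorem 2.9]{SplitTrees}, which asserts that \emph{every} graph $G$ of size at least $3$ admits a unique graph-decorated tree $\tau$ that is \emph{reduced} in the sense appropriate to that paper (internal nodes of degree at least $3$, no two adjacent nodes both decorated with cliques, no star-center adjacent to a star-leaf, and additionally every decoration is prime or a clique or a star). Since \cref{prop:SplitTreeBij} is the special case of that statement in which $G$ is distance-hereditary, the only work left is to reconcile the two notions of ``reduced'' tree: I must check that when $G$ is a DH graph, the decorations appearing in its canonical split tree are automatically cliques and stars (never a prime decoration of size $\ge 4$), so that the canonical tree of \cite[Theorem 2.9]{SplitTrees} is exactly a reduced clique-star tree in the sense of \cref{dfn:reduced_clique_star_tree}, and conversely that every reduced clique-star tree in our sense is a reduced split tree in their sense.

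First I would recall (from \cite[Section 3.1]{SplitTrees}, already cited in the excerpt) that the graphs realizable as $\Gr(\tau)$ for a clique-star tree $\tau$ are precisely the DH graphs; this already gives existence of \emph{some} clique-star tree with $\Gr(\tau)=G$ for any DH graph $G$. To make it \emph{reduced}, I would argue that the three reduction operations of \cref{dfn:reduced_clique_star_tree} can be performed without leaving the class of clique-star trees and without changing $\Gr(\tau)$: merging two clique-decorated nodes joined by an edge yields a single clique; contracting an edge from a star-center to a star-leaf merges the two stars into one star (with the merged node as the new center); and suppressing a degree-$2$ internal node glues the two incident edges. Each operation strictly decreases the number of internal nodes, so the process terminates, and one checks $\Gr$ is preserved at each step (this is the routine ``accessibility is unchanged'' verification, which I would not spell out in full).

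For uniqueness I would invoke \cite[Theorem 2.9]{SplitTrees} directly: the split decomposition tree of any graph into prime, clique and star ``bags'' is unique once one imposes the reducedness conditions, and for a DH graph all prime bags are in fact cliques or stars (a prime DH graph has at most $3$ vertices — equivalently, the only DH graphs with no nontrivial split are the clique and the star), so the canonical split tree of a DH graph is a reduced clique-star tree. Hence any reduced clique-star tree $\tau$ with $\Gr(\tau)=G$ must coincide with this canonical tree, giving uniqueness. I would phrase this as: ``\cref{prop:SplitTreeBij} is the restriction of \cite[Theorem 2.9]{SplitTrees} to the class of DH graphs, once one observes that the prime decorations appearing there reduce to cliques and stars in the DH case.''

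The main obstacle I anticipate is purely bookkeeping rather than conceptual: the definition of ``reduced'' in \cite{SplitTrees} is phrased for arbitrary split decomposition trees (with prime decorations allowed) and in a slightly different formalism, so the real content of the proof is a careful translation showing that our \cref{dfn:reduced_clique_star_tree}, \emph{together with} the constraint that decorations are cliques or stars, is equivalent to their reducedness condition for DH graphs — in particular that condition (iii) of \cref{dfn:reduced_clique_star_tree} (no star-center attached to a star-leaf) is exactly what their general condition becomes when no prime bags are present. Once this dictionary is in place the statement follows immediately, so I expect the write-up to be short, essentially a pointer to \cite[Theorem 2.9]{SplitTrees} plus the remark that prime DH decorations are trivial.
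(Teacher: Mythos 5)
Your proposal is correct and takes essentially the same approach as the paper: the paper's ``proof'' of \cref{prop:SplitTreeBij} is precisely the one-line remark that uniqueness follows directly from \cite[Theorem 2.9]{SplitTrees}, the general split-decomposition uniqueness theorem for arbitrary graphs, combined with the already-cited fact from \cite[Section 3.1]{SplitTrees} that DH graphs are exactly those realized by clique-star trees. Your write-up simply makes explicit the bookkeeping (prime decorations of a DH split tree are automatically cliques or stars; the two notions of ``reduced'' coincide in the absence of prime bags) that the paper leaves implicit, so there is nothing to reconcile beyond a difference in verbosity.
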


\subsection{Distances in DH graphs through their clique-star trees}
\label{ssec:jumps}

Let $\tau$ be a clique-star tree and $G=\Gr(\tau)$ be the corresponding graph (which is a DH graph as we have seen). 
We denote by $d_G$ the graph distance in $G$. In this section,
we explain how $d_G$ can be read on the tree $\tau$. We recall that the leaves of $\tau$ are identified with the vertices of $G$.

For a path $p$ in $\tau$, the \emph{jumps} of $p$ are defined as follows.
When $p$ goes through a node $v$, it enters and exits through edges $e_{in}(v)$ and $e_{out}(v)$ (both incident to $v$).
If $\{\rho_v(e_{in}(v)),\rho_v(e_{out}(v))\}$ is not an edge in $\Gamma_v$, we say that $v$ is a jump of $p$. 
(In particular, and unless otherwised specified, the starting and ending points of $p$ are not jumps of $p$.)
Now, for two leaves $\ell$ and $\ell'$ of $\tau$, 
letting $p$ be the unique path from $\ell$ to $\ell'$ in $\tau$, 
the number of jumps of $p$ is denoted by $\jump(\tau,\ell,\ell')$.

\begin{lemma}\label{lem:dG}
Let $\tau$ be a clique-star tree with corresponding 
DH graph $G=\Gr(\tau)$, 
and let $\ell$, $\ell'$ be leaves of $\tau$.
Then we have $d_G(\ell,\ell')=\jump(\tau,\ell,\ell')+1$.
\end{lemma}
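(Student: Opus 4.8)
The plan is to establish the identity $d_G(\ell,\ell')=\jump(\tau,\ell,\ell')+1$ by a double inequality, working with the path structure in the clique-star tree $\tau$. First I would set up notation: given leaves $\ell,\ell'$, let $p$ be the unique tree-path from $\ell$ to $\ell'$, passing through internal nodes $v_1,\dots,v_m$ in order, with incoming/outgoing edges $e_{in}(v_i),e_{out}(v_i)$, and let $q_i^{-}=\rho_{v_i}(e_{in}(v_i))$, $q_i^{+}=\rho_{v_i}(e_{out}(v_i))$ be the corresponding marker vertices in the decoration $\Gamma_{v_i}$. The node $v_i$ is a jump exactly when $\{q_i^-,q_i^+\}\notin E(\Gamma_{v_i})$.

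For the upper bound $d_G(\ell,\ell')\le \jump(\tau,\ell,\ell')+1$, the idea is to construct an explicit walk in $G$ of that length from $\ell$ to $\ell'$. I would argue that between two consecutive jumps (or between an endpoint and the first jump), the relevant sub-path of $p$ witnesses accessibility, so consecutive ``checkpoint'' leaves are adjacent in $G$ — more carefully, I would insert, at each jump node $v_i$, an auxiliary leaf (or use an accessible leaf hanging off the appropriate marker vertex) to split the path into accessible segments. The key sublemma here is: if every internal node on a tree-path between two leaves $\ell,\ell'$ is non-jumping, then $\ell$ is accessible from $\ell'$, hence $\{\ell,\ell'\}\in E(G)$; this is immediate from the definition of $\Gr(\tau)$. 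So the path $\ell=a_0, a_1,\dots,a_{k},a_{k+1}=\ell'$, where $k=\jump(\tau,\ell,\ell')$ and the $a_i$ are suitably chosen leaves straddling the jumps, is a walk of length $k+1$ in $G$. One technical point: one must ensure such straddling leaves exist — since every internal node has degree $\ge 3$ (in the reduced case, or can be arranged in general), there is always a third edge at a jump node leading to a subtree containing a leaf; care is needed that this leaf is genuinely accessible from the relevant marker vertex, which may require choosing the subtree and the leaf so that the descent is along accessible edges (e.g. descending through clique decorations, or along the center-to-leaf or leaf-to-center direction of stars appropriately).

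For the lower bound $d_G(\ell,\ell')\ge \jump(\tau,\ell,\ell')+1$, I would proceed by induction on $d_G(\ell,\ell')$, or equivalently show that any path $\ell=b_0,b_1,\dots,b_r=\ell'$ in $G$ satisfies $r\ge \jump(\tau,\ell,\ell')+1$. The crucial structural fact is a ``triangle inequality'' for the jump count along tree-paths: for any three leaves $\ell,\ell'',\ell'$ one has $\jump(\tau,\ell,\ell')\le \jump(\tau,\ell,\ell'')+\jump(\tau,\ell'',\ell')+1$, coming from analyzing how the tree-path from $\ell$ to $\ell'$ relates to the paths through $\ell''$ (they share a prefix and a suffix, and at the branch point there is at most one extra jump created). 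Combined with the base fact that adjacent vertices $\{b,b'\}\in E(G)$ have $\jump(\tau,b,b')=0$ (again by definition of accessibility — an edge of $G$ means a jump-free tree-path), a telescoping argument over the $G$-path of length $r$ gives $\jump(\tau,\ell,\ell')\le 0\cdot r + (r-1)=r-1$, i.e. $r\ge \jump(\tau,\ell,\ell')+1$. I expect the main obstacle to be the triangle-type inequality for jump counts: one must carefully track what happens at the node where the tree-path from $\ell$ to $\ell''$ diverges from the tree-path from $\ell$ to $\ell'$, checking that the decoration at that node contributes at most one additional jump and that no jumps are ``lost'' along the shared portions. The case analysis (center vs. leaf of a star, the three edges at the branch node, whether the branch node itself lies on one, two, or none of the three pairwise paths) is where the real work lies; everything else is bookkeeping with the definitions of accessibility and of $\Gr(\tau)$.
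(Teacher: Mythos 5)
Your proposal is correct and takes a genuinely different route from the paper's proof. The paper proceeds by induction on the number of internal nodes: it peels off an ``almost-pendant'' node $v$ (all but one of whose neighbors are leaves), relates $G$ to the graph $G^\star$ of the smaller tree via a case analysis on $\Gamma_v$ (clique, star with the outgoing edge at the center, star with the outgoing edge at an extremity), and checks that $d_G$ and the jump-count change consistently under these local operations. Your argument instead establishes the two inequalities directly. For the upper bound, the point you flagged resolves cleanly: a jump node necessarily carries a star decoration with both path-markers on extremities (in a clique all markers are pairwise adjacent), so it has a center marker, and from the center one can greedily descend to an accessible leaf by always following a decoration-neighbor of the current marker (such a neighbor exists since decorations are connected with at least two vertices); moreover, the transitions at the bracketing jump nodes are center-to-extremity or extremity-to-center, hence not jumps, so consecutive auxiliary leaves are indeed $G$-adjacent, and there is no need to separately assume degree at least three. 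For the lower bound, the triangle inequality you identify as the main obstacle is actually nearly free: splitting each pairwise tree-path at the median $m$ of $\ell,\ell',\ell''$, the jump counts on the three legs from $m$ telescope exactly, and the only discrepancy is the three $\{0,1\}$-valued indicators of whether $m$ is a jump on each pairwise path, which contribute at most $1$ in the wrong direction, so no case analysis on $\Gamma_m$ is required. Comparing the two approaches: the paper's induction implicitly recovers the classical generation of DH graphs by adding twins and pendant vertices, which is structurally informative and used elsewhere; your direct argument is shorter, more self-contained, and makes the identity between $d_G$ and the jump-count more transparent.
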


\begin{example}
Consider the clique-star tree $\tau$ of \cref{fig:Correspondence_TreeGraph},
and its leaves 6 and 7. The path from 6 to 7 (in red on the picture) has exactly two jumps.
Accordingly, the distance between vertices 6 and 7 in the associated DH graph
(also drawn on \cref{fig:Correspondence_TreeGraph}) is 3.
\end{example}
\begin{remark}
According to Lemma~\ref{lem:dG}, $\ell$ is accessible from $\ell'$ in $\tau$ (\emph{i.e.} $\jump(\tau,\ell,\ell')=0$)
if and only if $\{\ell,\ell'\}$ is an edge of $G$ (\emph{i.e.}  $d_G(\ell,\ell')=1$).
In other words, the lemma superseeds and generalizes the definition of the edge set of $\Gr(\tau)$.
\end{remark}

\begin{proof}
We proceed by induction. If $\tau$ has a single internal node,
then $G$ is isomorphic to the decoration of that node (hence, either a clique or a star),
and the statement holds trivially.

Let $\tau$ have $k>1$ internal nodes and assume that the statement holds for all
clique-star trees with fewer internal nodes.

Consider a node $v$ of $\tau$, all of whose neighbors but one are leaves (such a node always exists).
Denote by $d \geq 3$ the degree of $v$, by $\ell_1$, \dots, $\ell_{d-1}$ the leaves adjacent to $v$, and by $u$ the internal node of $\tau$ adjacent to $v$.
We also denote by $\Gamma_v$ the decoration of $v$, 
and by $x$ the marker vertex of $\Gamma_v$ corresponding to the edge $(v,u)$.
We let $\tau^\star$ be the clique-star tree obtained by replacing $v$ and $\ell_1$, \dots, $\ell_{d-1}$
by a single leaf $\ell^\star$ (adjacent to $u$), and denote by $G^\star=\Gr(\tau^\star)$ the associated graph. 
All these notations are sumarized on \cref{fig:NotationsPreuveDistances}
 for the reader's convenience.
 \begin{figure}[htbp]
 \[\includegraphics[height=35mm]{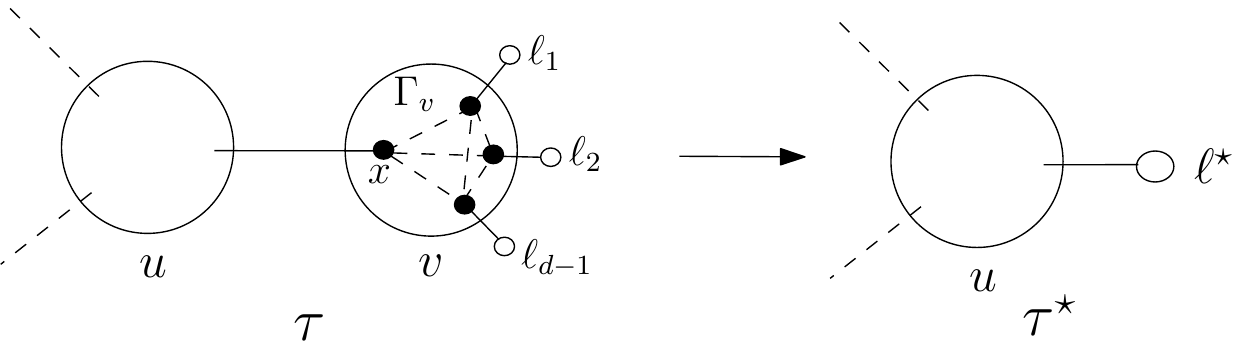}\]
 \caption{Illustration of the proof of \cref{lem:dG}.}
 \label{fig:NotationsPreuveDistances}
 \end{figure}

As we shall see, $G$ can be obtained by performing some local modifications on $G^\star$, 
which depend on $\Gamma_v$ and $x$. 
First note that leaves of $\tau$ and $\tau^\star$ different from  $\ell^\star$, $\ell_1$, \dots, $\ell_{d-1}$ are the same
and are therefore vertices in both $G$ and $G^\star$; we will call them {\em old} vertices, refering to $\ell_1$, \dots, $\ell_{d-1}$ as \emph{new}. 
By construction, adjacency relations between old vertices are identical in $G$ and $G^\star$.
So, knowing $G^\star$, to know $G$ entirely, we just have to describe the adjacency relations among new vertices, and between the new vertices and the old ones. 
To this end, we distinguish several cases. 

\begin{itemize}
 \item If $\Gamma_v$ is a clique, then 
the definition of the construction $\Gr$ implies that 
$G$ is obtained from $G^\star$ by replacing $\ell^\star$ with $d-1$ vertices $\ell_1, \dots, \ell_{d-1}$, 
which form a clique of size $d-1$, 
and such that the old neighbors of each $\ell_i$ are the neighbors of $\ell^\star$ in $G^\star$. 
 \item If $\Gamma_v$ is a star with $x$ the center of the star, 
 then similarly $G$ is obtained from $G^\star$ by replacing $\ell^\star$ with $d-1$ vertices $\ell_1, \dots, \ell_{d-1}$, 
which form an independent set of size $d-1$, 
and such that the old neighbors of each $\ell_i$ are the neighbors of $\ell^\star$ in $G^\star$. 
\item Finally, assume that $\Gamma_v$ is a star and $x$ is not the center of the star. 
Let $\ell_j$ be the leaf of $\tau$ attached to the center of $\Gamma_v$. 
Here, $G$ is obtained from $G^\star$ by 
keeping the vertex $\ell^\star$ (with its adjacent edges) but renaming it $\ell_j$, 
and adding $d-2$ vertices $\ell_1, \dots, \ell_{j-1}, \ell_{j+1}, \dots, \ell_{d-1}$, 
which form an independent set of size $d-2$, 
and all connected only to $\ell_j$.
\end{itemize}
In particular, $G$ always contains at least one vertex with exactly the same old neighbors as $\ell^\star$ in $G^\star$; call such vertices \emph{copies} of $\ell^\star$. Moreover, new vertices of $G$ which are not copies of $\ell^\star$ are pendant vertices incident to a copy of $\ell^\star$. 

With this remark, it becomes clear that distances between old vertices are the same in $G$ and $G^\star$. 
Moreover, the path between any two old leaves $\ell$ and $\ell'$ in $\tau$ also matches the path between $\ell$ and $\ell'$ in $\tau^\star$, so that we have $d_G(\ell,\ell')=d_{G^\star}(\ell,\ell')=\jump(\tau^\star,\ell,\ell')+1=\jump(\tau,\ell,\ell')+1$ as claimed. 
When $\ell$ and $\ell'$ are both new vertices, their distance $d_G(\ell,\ell')$ is either $1$ or $2$, depending on whether the corresponding marker vertices in $\Gamma_v$ are connected or not. Thus, in this case also, we have $d_G(\ell,\ell')=\jump(\tau,\ell,\ell')+1$. 
The interesting case is when $\ell$ is a new vertex and $\ell'$ an old vertex. 
Again, we proceed by case analysis.
Denote by $p$ the path from $\ell$ to $\ell'$ in $\tau$ and by $p^\star$ the path from $\ell^\star$ to $\ell'$ in $\tau^\star$. 
The path $p$ is obtained from $p^\star$ by replacing the first edge $(\ell^\star,u)$ by the two edges $(\ell, v), (v,u)$. (Recall that $u$ is the only nonleaf node of $\tau$ adjacent to $v$, corresponding to the marker vertex $x$ of $\Gamma_v$.)
\begin{itemize}
 \item Assume first that $\ell$ is a copy of $\ell^\star$. 
 Note that this happens when $\Gamma_v$ is a clique, 
 or when $\Gamma_v$ is a star with $\ell$ attached to the center of $\Gamma_v$, or when $\Gamma_v$ is a star with $x$ the center of the star. 
 Since $\ell$ is a copy of $\ell^\star$, of course $d_G(\ell,\ell')=d_{G^\star}(\ell^\star,\ell')$. 
 On the other hand, in all cases, the marker vertices of $\Gamma_v$ attached to $\ell$ and $u$ are adjacent. Therefore, we have $\jump(\tau^\star,\ell^\star,\ell')=\jump(\tau,\ell,\ell')$, and it follows that $d_G(\ell,\ell')=\jump(\tau,\ell,\ell')+1$. 
 \item The last case to consider is when $\Gamma_v$ is a star with $x$ an extremity of the star, and $\ell$ attached to another extremity of the star. 
 In this case, $p$ has one more jump than $p^\star$, since the marker vertices to which $x$ and $\ell$ are attached are not adjacent in $\Gamma_v$. 
 On the other hand, the only neighbor of $\ell$ in $G$ is the leaf of $\tau$ attached to the center of $\Gamma_v$, previously denoted $\ell_j$. 
 Since $\ell_j$ is a copy of $\ell^\star$, we have 
 $d_G(\ell,\ell') = 1 + d_G(\ell_j,\ell') = 1+d_{G^\star}(\ell^\star,\ell')$, which gives $d_G(\ell,\ell')=\jump(\tau,\ell,\ell')+1$ as desired. \qedhere
\end{itemize}
\end{proof}

\subsection{Clique-star trees as a labeled combinatorial class}
In \cref{sec: split}, we have seen that DH graphs are in bijection with reduced clique-star trees.
We recall that the latter are nonplane unrooted trees.
To use the symbolic method and tools of analytic combinatorics, it is more convenient to deal with rooted trees.
Starting from a DH graph with vertex set $\{0,1,\dots,n\}$,
we consider the reduced clique-star tree associated with it by \cref{prop:SplitTreeBij}
and see the leaf with label $0$ as the root.
\begin{definition}
\label{def:DH_tree}
 A \emph{distance-hereditary tree} (DH-tree for short) of size $n \ge 2$ is a reduced clique-star tree with $n+1$ leaves labeled from $0$ to $n$, where the leaf $0$ is seen as the root, therefore called the \emph{root-leaf}. 
\end{definition}
 
 By construction, DH-trees of size $n$ are in bijection with DH graphs with vertex set $\{0,1,\dots,n\}$.
 Most of the time, we forget the root-leaf and think at the tree as rooted in the internal node to which the root-leaf is attached;
 this node is referred to as root-node below.
 The root-leaf is represented by the symbol $\perp$ in pictures.

Having broken the symmetry when selecting a root, 
a node $v$ decorated with a star can be of two types.
\begin{itemize}
\item Either the path from $v$ to the root\footnote{Root-node or root-leaf, equivalently, unless $v$ is the root-node; in this latter case, the type of $v$ is defined in the same way considering the path (of length $1$) from $v$ to the root-leaf.} exits $v$ through an edge attached to {\em an extremity} of the star $\Gamma_v$.
In this case, we say that $v$ is of type $\SSX$. Note that one of the children of $v$ is attached to the center
of the star. We see this child as distinguished.
\item Or the path from $v$ to the root exits $v$ through the edge attached to {\em the center} of the star.
In this case, we say that $v$ is of type $\SSC$. Note that all children of $v$ are attached to extremities of the star
so that there is no distinguished child in this case.
\end{itemize}
A node decorated with a clique is of type $\KK$.

With this in mind, and recalling the conditions of \cref{{dfn:reduced_clique_star_tree}}, 
one can describe DH-trees directly as follows.
A DH-tree is a nonplane rooted tree $T$ such that
\begin{enumerate}
\label{def:DH_tree_Concrete}
\item $T$ has $n$ leaves labeled $1,\dots,n$;
\item internal nodes of $T$ (including the root)
  carry decorations, called \emph{types}, taken from the set $\{\KK, \SSC, \SSX\}$;
\item every node of type $\KK$ has at least $2$ children, none of which can be of type $\KK$;
\item every node of type $\SSC$ has at least $2$ children, none of which can be of type $\SSC$;
\item every node of type $\SSX$ has at least $2$ children, one of which is distinguished;
the distinguished child cannot be of type $\SSX$, while other are forbidden to be of type $\SSC$.
\label{def:DH_tree_Concrete2}
\end{enumerate}
\cref{fig:Example_DHtree} shows an example of DH-tree. 

\begin{figure}[htbp]
\includegraphics[width=10cm]{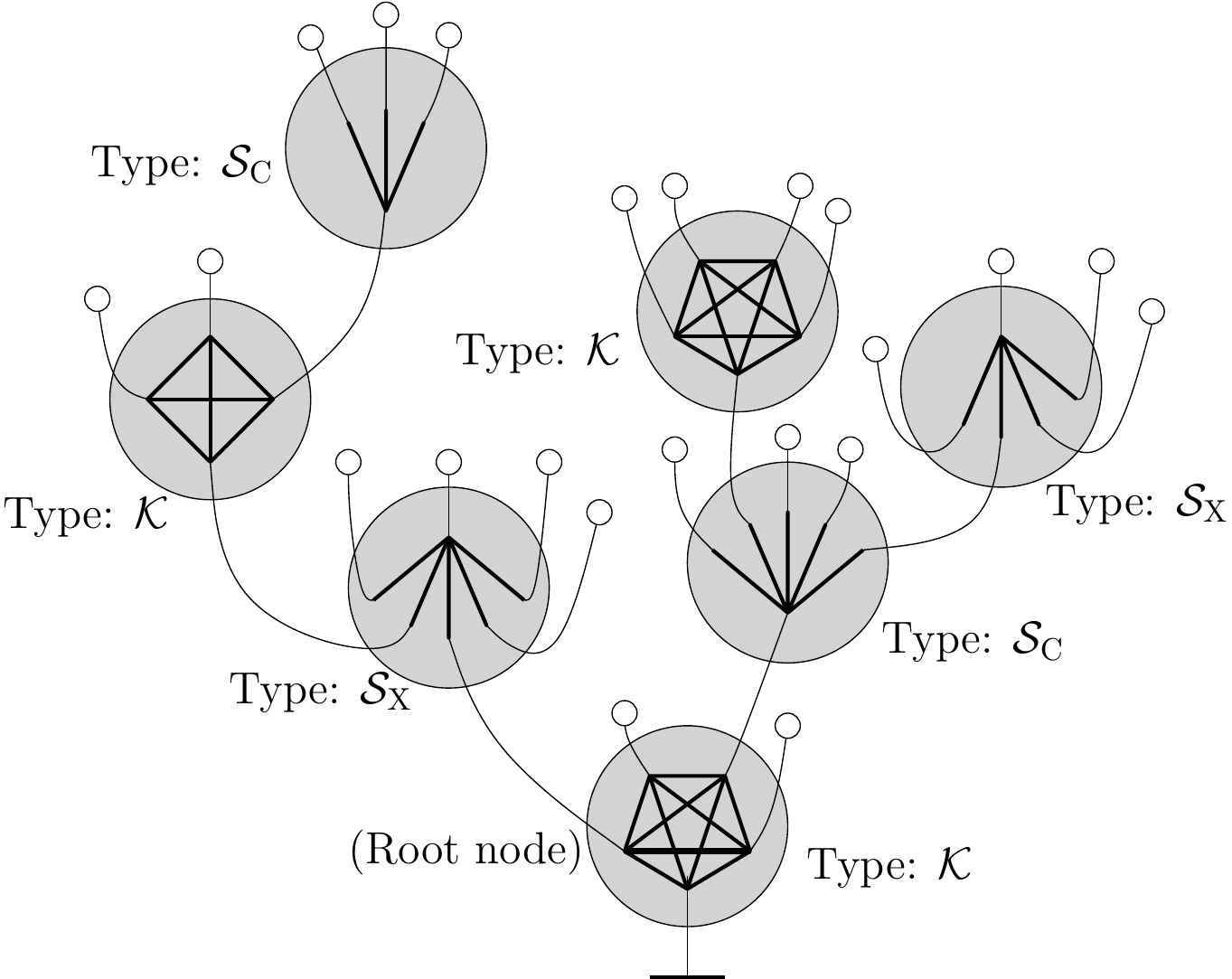}
\caption{A DH-tree of size $n=22$, omitting the labels of the leaves for readability. }
\label{fig:Example_DHtree}
\end{figure}

We will now translate this description into the framework of labeled combinatorial classes 
(see \cite{Violet} for an introduction). We recall that $+$ is used for the disjoint union of combinatorial classes;
$\mathcal A \times \mathcal B$ is the set of pairs $(a,b)$ where $a$ is in $\mathcal A$ and $b$ in $\mathcal B$
(with the convention that the label sets of $a$ and $b$ are disjoint; we refer to \cite{Violet}
for details on how to deal with labelings in combinatorial classes).
Also, if $\mathcal C$ is a combinatorial class with no element of size $0$,
then $\Set(\mathcal C)$ is the class of (unordered) sets of elements of $\mathcal C$.
An index on $\Set$ indicates restrictions on the number of elements in the set.

We say that a DH-tree is of type $t$ if its root-node is of type $t$. We let  
$\mDK$ (resp. $\mDSC$, $\mDSX$)
be the (labeled) combinatorial class of DH-trees of type $\mathcal{K}$ 
(resp. $\mathcal{S}_{\mathrm{C}}$, $\mathcal{S}_{\mathrm{X}}$). 
As usual, we use the symbol $\mathcal Z$ to represent the trivial tree reduced to one vertex (which is a leaf).

\begin{proposition}[Chauve-Fusy-Lumbroso\footnote{The equation given for $\mDSX$ in 
\cite[Theorem 3]{ChauveFusyLumbroso} is different from the one given here.
The one given here can however be found in the proof of \cite[Theorem 3]{ChauveFusyLumbroso}.}
\cite{ChauveFusyLumbroso}]
  
The combinatorial classes $\mDK,\mDSC,\mDSX$ have the following specification:
\begin{equation}
\label{eq:SpecifDH}
\begin{cases}
  \mDK =& \Set_{\geq 2}(\mathcal Z +\mDSC+\mDSX);\\
\mDSC=& \Set_{\geq 2}(\mathcal Z+ \mDK +\mDSX);\\
\mDSX=& (\mathcal Z+ \mDK +\mDSC)\times \Set_{\geq 1}(\mathcal Z + \mDK+\mDSX).
\end{cases}
\end{equation}
The class $\mD$ of all DH-trees is simply the disjoint union of the three classes above, 
\emph{i.e.}
\[
\mD=  \mDK+ \mDSC + \mDSX. 
\]
\end{proposition}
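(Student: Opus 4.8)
The plan is to translate the concrete combinatorial description of DH-trees (conditions (1)--(5) on p.~\pageref{def:DH_tree_Concrete}) directly into the symbolic method, using the standard dictionary between combinatorial constructions on labeled classes and their specifications. The three classes $\mDK$, $\mDSC$, $\mDSX$ partition the DH-trees according to the type of the root-node, so the final disjoint-union statement $\mD = \mDK + \mDSC + \mDSX$ is immediate once the three classes are defined, and the real content is checking the three equations in \eqref{eq:SpecifDH}. The key observation making the symbolic dictionary applicable is that a DH-tree is determined by its root-node together with the (unordered) collection of subtrees hanging from that root-node, each such subtree being itself a DH-tree (rooted at its own root-node) --- so recursively the children of an internal node are described by a $\Set$ construction applied to the class $\mathcal Z + \mDK + \mDSC + \mDSX$, with the type restrictions of conditions (3)--(5) removing some of the four summands.

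First I would treat the $\KK$ case. By condition (3), a node of type $\KK$ has at least two children, none of type $\KK$; a child is either a leaf (class $\mathcal Z$) or an internal node which, being the root of its own subtree, is of type $\SSC$ or $\SSX$ (class $\mDSC + \mDSX$). Since the tree is nonplane, the children form an unordered multiset --- but in the labeled setting this is exactly a $\Set$ --- and ``at least two'' gives $\Set_{\geq 2}$. Hence $\mDK = \Set_{\geq 2}(\mathcal Z + \mDSC + \mDSX)$. The $\SSC$ case is entirely symmetric using condition (4): $\mDSC = \Set_{\geq 2}(\mathcal Z + \mDK + \mDSX)$. For the $\SSX$ case, condition (5) is asymmetric: one child is distinguished and cannot be of type $\SSX$ (so it lies in $\mathcal Z + \mDK + \mDSC$), and the remaining children --- of which there is at least one, since the node has at least two children total --- cannot be of type $\SSC$ (so each lies in $\mathcal Z + \mDK + \mDSX$) and form an unordered set. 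This yields $\mDSX = (\mathcal Z + \mDK + \mDSC) \times \Set_{\geq 1}(\mathcal Z + \mDK + \mDSX)$, where the $\times$ marks off the distinguished child from the set of the others.

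The main thing to be careful about --- and what I would flag as the principal subtlety rather than a genuine obstacle --- is the correspondence between DH-trees as defined via reduced clique-star trees (\cref{def:DH_tree}) and the concrete description (1)--(5): one must check that conditions (1)--(3) of \cref{dfn:reduced_clique_star_tree} (internal nodes of degree $\geq 3$, no clique--clique edge, no center-of-star to leaf-of-star edge), after rooting and after the $\SSX$/$\SSC$ refinement of star-types, translate precisely into the constraints ``$\geq 2$ children'' and the forbidden child-types listed in (3)--(5). This is essentially the content of the remarks preceding the proposition in the excerpt, so I would either cite that discussion or spell out the short case-check: an internal node of degree $d\geq 3$ has $d-1$ children after rooting (hence $\geq 2$), a clique-node (type $\KK$) cannot be adjacent to another clique-node (so no child is of type $\KK$), and the star conditions combined with the position of the root relative to the star's center give exactly the $\SSC$/$\SSX$ child-type restrictions. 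Once this dictionary is in place the three equations drop out of the symbolic method with no computation, and I would close by noting $\mD = \mDK + \mDSC + \mDSX$ since every DH-tree has a root-node of exactly one of the three types.
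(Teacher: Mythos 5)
Your proposal is correct and takes essentially the same route as the paper: the paper sets up the concrete description of DH-trees (conditions (i)--(v)) immediately before the proposition precisely so that the specification follows by the standard symbolic-method dictionary, and then attributes the result to Chauve--Fusy--Lumbroso without spelling out the translation. Your careful reading of condition (v) --- one distinguished child avoiding type $\SSX$, at least one other child each avoiding type $\SSC$, giving the $\times \Set_{\geq 1}$ structure --- is exactly the right way to get the asymmetric third equation, and flagging the equivalence between the reduced-clique-star-tree definition and conditions (i)--(v) as the only nontrivial step is an accurate assessment.
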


\subsection{Singularity analysis of the specification}
\label{ssec:singularity_analysis_NonMarked}
We associate to each combinatorial class of DH-trees
 a generating function $D=D(z)$, $\DK=\DK(z)$,
 $\DSC=\DSC(z)$ and $\DSX=\DSX(z)$:
$$
D=\sum_{T \in \mD} \frac{z^{|T|}}{|T|!}\ ,\quad
\DK=\sum_{T \in \mDK} \frac{z^{|T|}}{|T|!}\ ,\quad
\DSC=\sum_{T \in \mDSC} \frac{z^{|T|}}{|T|!}\ ,\quad
\DSX=\sum_{T \in \mDSX} \frac{z^{|T|}}{|T|!}\ .
$$
By loose estimates on the number of DH-trees,
 it is easy to see that each of the above series has a positive radius of convergence.
A key step in the proof of our main theorem will be given by the singularity analysis of the above series. A similar analysis is provided in \cite{ChauveFusyLumbroso} in the \emph{unlabeled} case, we here give all the details of the labeled case.

We first note that using Eqs. \eqref{eq:SpecifDH} and an immediate induction on $i \ge 0$, we have $[z^i]\DK=[z^i]\DSC$
for all $i\ge 0$, \emph{i.e.}  $\DK=\DSC$ as formal power series.
We will therefore drop $\DSC$ and use only $\DK$.
Eqs. \eqref{eq:SpecifDH} yield:
\begin{equation}
\label{eq:SpecifDH_reduced}
\begin{cases}
  \DK  =& \exp_{\geq 2}(z+\DK+\DSX);\\
\DSX=& (z+ 2\DK)\exp_{\geq 1}(z+ \DK +\DSX)\ ,
\end{cases}
\end{equation}
where $\exp_{\geq r}(y)=\sum_{\ell \geq r}y^\ell/\ell !$.

The system \eqref{eq:SpecifDH_reduced} satisfies the assumptions of  the Drmota--Lalley--Woods Theorem
(see \cite[Theorem A.6]{Nous3}\footnote{More classical references
for variants of this theorem are \cite[Section VII.6]{Violet} and \cite[Section 2.2.5]{DrmotaRandomTrees}, but the first one assumes that we have a polynomial system,
while the second one has a different well-posedness condition, which is not satisfied here
 (and uses extra parameters which are not needed here).}).
It follows that the series $\DK,\DSX$ have
the same radius of convergence $\rho$ and both have a square-root singularity at $\rho$. 
Moreover they are {\em $\Delta$-analytic},
meaning that they are defined and analytic on some set of the form
\[\{z \in \mathbb C, |z| < R_1 \text{ and }|\Arg(z-\rho)|>\theta\},\]
for some $R_1>\rho$ and $\theta>0$, where $\Arg$ is the principal determination of the logarithm. The notion of $\Delta$-analyticity is standard in analytic combinatorics,
see \cite[Chapter VI]{Violet}.
Let us introduce an auxiliary series
\begin{equation}\label{eq:defF}
F(z):=\exp_{\ge 1} (z+\DK+\DSX).
\end{equation}

\begin{lemma}
We have
\begin{equation}
\label{eq:KS_with_F}
\begin{cases}
  \DK=&\frac12 \left(\frac{F}{1+F} -z\right);\\
\DSX=&\frac{F^2}{1+F}.
\end{cases}
\end{equation}
\end{lemma}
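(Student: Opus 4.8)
The plan is to solve the system \eqref{eq:SpecifDH_reduced} by introducing the auxiliary series $F$ and eliminating $\DK$ and $\DSX$ in favour of $F$. First I would observe that, by definition \eqref{eq:defF}, we have $F = \exp(z+\DK+\DSX) - 1$, hence $1+F = \exp(z+\DK+\DSX)$. Now I would rewrite the two equations of \eqref{eq:SpecifDH_reduced} in terms of $1+F$. Since $\exp_{\geq 2}(y) = \exp(y) - 1 - y$ and $\exp_{\geq 1}(y) = \exp(y) - 1 = F$ (with $y = z+\DK+\DSX$), the system becomes
\[
\begin{cases}
\DK = (1+F) - 1 - (z+\DK+\DSX) = F - z - \DK - \DSX;\\
\DSX = (z+2\DK)\, F.
\end{cases}
\]
From the first equation I get $2\DK + \DSX = F - z$, i.e. $\DSX = F - z - 2\DK$.

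Next I would combine this with the second equation. Substituting $z + 2\DK = \frac{F-z}{F}\cdot F$? More directly: from $2\DK + \DSX = F-z$ and $\DSX = (z+2\DK)F$ I can write $z + 2\DK = z + (F - z - \DSX) = F - \DSX$, so the second equation reads $\DSX = (F - \DSX)F = F^2 - \DSX F$, giving $\DSX(1+F) = F^2$, that is $\DSX = \frac{F^2}{1+F}$, which is the second identity claimed. Then plugging back into $2\DK = F - z - \DSX = F - z - \frac{F^2}{1+F} = \frac{(F-z)(1+F) - F^2}{1+F} = \frac{F + F^2 - z - zF - F^2}{1+F} = \frac{F - z - zF}{1+F} = \frac{F}{1+F} - z$, so $\DK = \frac12\left(\frac{F}{1+F} - z\right)$, which is the first identity.

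This is a purely algebraic manipulation, so there is no real obstacle; the only point requiring minimal care is the bookkeeping with $\exp_{\geq r}$ versus $\exp$, namely using $\exp_{\geq 1}(y) = \exp(y)-1$ and $\exp_{\geq 2}(y) = \exp(y)-1-y$ consistently, and noting that these identities hold as formal power series (equivalently as analytic functions on the common domain of $\Delta$-analyticity established above), so that the divisions by $1+F$ are legitimate since $F$ has no constant term and hence $1+F$ is invertible as a formal power series. All the manipulations above are reversible, so the two displayed formulas in \eqref{eq:KS_with_F} are equivalent to the system \eqref{eq:SpecifDH_reduced} together with the definition of $F$.
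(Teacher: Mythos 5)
Your proof is correct and takes essentially the same approach as the paper: rewrite the system \eqref{eq:SpecifDH_reduced} in terms of $F$ using the identities $\exp_{\geq 1}(y)=\exp(y)-1$ and $\exp_{\geq 2}(y)=\exp(y)-1-y$, then solve the resulting linear system in $\DK,\DSX$ with $F$ treated as a parameter. You merely spell out the elimination steps that the paper leaves implicit.
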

\begin{proof}
Using $F$, we can rewrite the system \eqref{eq:SpecifDH_reduced} as
\[
\begin{cases}
  \DK  =& F-(z+\DK+\DSX) ;\\
\DSX=& (z+ 2\DK)\, F.
\end{cases}
\]
We solve this linear system for $\DK$ and $\DSX$, seeing $F$ as a parameter.
This gives the formulas of the lemma.
\end{proof}

\begin{proposition}
\label{prop:expansion_F}
The series $F$ is $\Delta$-analytic at $\rho$ and admits the following singular expansion around $\rho$:
\begin{equation}
F(z)=F(\rho)- \gamma_F \sqrt{1-z/\rho} +O\big(1-z/\rho\big),\label{eq:expansion_F}
\end{equation}
where
\begin{itemize}
\item $F(\rho)=\frac{\sqrt3-1}2$ is the unique positive root of $\ 2 F(\rho)^2+2 F(\rho)-1=0$; 
\item $\gamma_F=\frac{\sqrt2 (1+\sqrt3)^2}{4\sqrt{3+2\sqrt{3}}} \sqrt{\rho}$. 
\end{itemize}
\end{proposition}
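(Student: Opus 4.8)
The plan is to combine the two formulas from the previous lemma with the definition \eqref{eq:defF} of $F$ to obtain a single implicit equation satisfied by $F$, and then apply the standard singularity-analysis machinery. First I would substitute the expressions \eqref{eq:KS_with_F} into the relation $F = \exp_{\ge 1}(z+\DK+\DSX) = e^{z+\DK+\DSX} - 1$. Writing $\log(1+F) = z + \DK + \DSX$ and replacing $\DK$ and $\DSX$ by their values in terms of $F$ and $z$ gives
\[
\log(1+F) = z + \tfrac12\Big(\tfrac{F}{1+F} - z\Big) + \tfrac{F^2}{1+F} = \tfrac{z}{2} + \frac{F + 2F^2}{2(1+F)},
\]
which, after clearing denominators, is a relation of the form $\Phi(z,F) = 0$ with $\Phi$ analytic in both variables in a neighbourhood of the relevant point. (The companion Maple worksheet can be used to carry out these manipulations cleanly.)

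Next I would verify that this equation is in "smooth implicit-function" form suitable for the square-root singularity analysis of \cite[Section VII.4]{Violet}: since $\DK$ and $\DSX$ were already shown (via Drmota--Lalley--Woods) to be $\Delta$-analytic at $\rho$ with a square-root singularity, so is $F = e^{z+\DK+\DSX}-1$, which immediately gives $\Delta$-analyticity and an expansion of the form $F(z) = F(\rho) - \gamma_F\sqrt{1-z/\rho} + O(1-z/\rho)$; the content is to identify $F(\rho)$ and $\gamma_F$. The value $F(\rho)$ is pinned down by the system at the singularity: the square-root singularity of $\DK,\DSX$ forces the Jacobian-type derivative condition $\partial_F \Phi(\rho, F(\rho)) = 0$ to hold simultaneously with $\Phi(\rho,F(\rho))=0$. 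Differentiating $\log(1+F) = \frac{z}{2} + \frac{F+2F^2}{2(1+F)}$ with respect to $F$ and setting the derivative of the two sides equal (the "vertical tangent" condition) yields $\frac{1}{1+F(\rho)} = \frac{\partial}{\partial F}\big[\frac{F+2F^2}{2(1+F)}\big]\big|_{F(\rho)}$, which simplifies to a polynomial equation; a short computation shows this is equivalent to $2F(\rho)^2 + 2F(\rho) - 1 = 0$, whose unique positive root is $\frac{\sqrt3-1}{2}$.

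To get $\gamma_F$, I would plug the ansatz $F(z) = F(\rho) - \gamma_F\sqrt{1-z/\rho} + \dots$ into $\Phi(z,F)=0$, expand to second order in $u := \sqrt{1-z/\rho}$ (using $z = \rho(1-u^2)$), and solve for $\gamma_F^2$ in terms of $\rho$, $F(\rho)$ and the second $F$-derivative of $\Phi$ at $(\rho, F(\rho))$; this is the usual formula $\gamma_F = \sqrt{-2\rho\,\partial_z\Phi / \partial_F^2\Phi}$ evaluated at the singularity. Substituting $F(\rho) = \frac{\sqrt3-1}{2}$ and simplifying should produce $\gamma_F = \frac{\sqrt2(1+\sqrt3)^2}{4\sqrt{3+2\sqrt3}}\sqrt{\rho}$. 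The main obstacle I anticipate is purely computational rather than conceptual: reducing the two-variable polynomial relation and its derivatives to the stated closed forms involves nontrivial algebraic simplification with nested radicals, and one must be careful that $\Phi$ is indeed given by a clean polynomial (after clearing the logarithm via exponentiation or by working with $G := 1+F$) so that the standard smooth-singularity lemma applies verbatim; this is exactly where I would lean on the companion Maple worksheet to certify the identities $2F(\rho)^2+2F(\rho)-1=0$ and the value of $\gamma_F$.
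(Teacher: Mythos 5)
Your approach matches the paper's: both derive $\Delta$-analyticity and the square-root singularity type from Drmota--Lalley--Woods applied to the system (not from the smooth implicit-function schema, whose positivity hypothesis $(I_2)$ fails here, as the paper's subsequent remark notes), then eliminate $\DK,\DSX$ to obtain a single implicit equation for $F$ (your $\Phi(z,F)=0$ is just the logarithmic rewrite of the paper's $F=G(z,F)$), use the characteristic system at the singularity to pin down $F(\rho)$ via $2F(\rho)^2+2F(\rho)-1=0$, and identify $\gamma_F$ via the singular implicit-function expansion with algebraic cleanup deferred to Maple. One small caveat: your quoted formula $\gamma_F=\sqrt{-2\rho\,\partial_z\Phi/\partial_F^2\Phi}$ has a spurious minus sign (the order-$u^2$ coefficient in the expansion you describe yields $\gamma_F^2 = 2\rho\,\partial_z\Phi/\partial_F^2\Phi$, positive since $\partial_z\Phi$ and $\partial_F^2\Phi$ are both negative at the singularity for your choice of $\Phi$), but carrying out the expansion as you describe would give the correct sign.
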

The expression for $\gamma_F$ is computed in the companion Maple worksheet. This also holds for other constants $\gamma_H,\gamma_{H,2c},\gamma_{E,3\ell}$ arising later.

Throughout the paper, when a series $S$ has a square-root singularity,
we denote by $\gamma_S$ the coefficient of the square-root term in the singular expansion
of $S$ near its radius of convergence, with the same sign convention as above.
Also, for a variable $x$ and a (multivariate) function $A(x,\dots)$, we denote by $A_x$ the partial derivative of $A$ w.r.t. $x$. 
\begin{proof}
By \cref{eq:defF} the series $F$ is $\Delta$-analytic at $\rho$ and has a square-root singularity at $\rho$, therefore the expansion of $F$ around $\rho$ is given by \cref{eq:expansion_F} 
for some $F(\rho),\gamma_F$ which are to be determined. 
In addition, since $F$ is a series in $z$ with nonnegative coefficients, the transfer theorem ensures that $\gamma_F>0$. 

Thanks to Eqs. \eqref{eq:KS_with_F} one can eliminate $\DK$ and $\DSX$ in Eq. \eqref{eq:defF}. We obtain 
 that $F$ is the solution of the equation $F=G(z,F)$,
where 
\begin{equation}
G(z,w)=\exp_{\ge 1} \Bigg[z+\frac12 \left(\frac{w}{1+w} -z\right)+\frac{w^2}{1+w}\Bigg].\label{eq:EquationG_F}
\end{equation}

Plugging \cref{eq:expansion_F} into $F=G(z,F)$ and comparing the expansions
of both sides show that necessarily 
\begin{equation}\label{eq:charSys}
 F(\rho)=G(\rho,F(\rho)) \quad \text{and} \quad G_w(\rho,F(\rho))=1.
\end{equation}
(These equations are usually referred to as the characteristic system \cite[Section VII.4]{Violet}.)

Observing that $$G_w(z,w)= (1-\tfrac1{2(1+w)^2}) (1+G(z,w)),$$
the characteristic system yields the following equation
\begin{equation}\label{eq:Frho}
 2 F(\rho)^2+2 F(\rho)-1=0,
\end{equation}
whose only positive solution is $\frac{\sqrt3-1}2$.

Using that $F(\rho)=\frac{\sqrt3-1}2$, we can solve for $\rho$ the first of Eqs. \eqref{eq:charSys}, giving an explicit expression for $\rho$ and the numerical estimate $\rho \approx 0.1597$ (see Maple worksheet). 
Furthermore, using the Singular Implicit Functions Lemma \cite[Lemma VII.3]{Violet}, the constant $\gamma_F$ is given by
\[
\gamma_F=\sqrt{\frac{2 \rho G_z(\rho,F(\rho))}{G_{ww}(\rho,F(\rho))}}.\]
We note that $G_z(z,w)=\frac12(1+G(z,w))$, so that $2 G_z(\rho,F(\rho))=1+F(\rho)$.
Thus we have
\[
\gamma_F=\sqrt{\frac{1+F(\rho)}{G_{ww}(\rho,F(\rho))}}\, \sqrt{\rho} = \frac{(1+\sqrt3)^2}{2\sqrt{6+4\sqrt{3}}} \sqrt{\rho},
\]
where the last equality is justified in the companion Maple worksheet.
\end{proof}

\begin{remark}
Since $F$ is the solution of the implicit equation $F=G(z,F)$, it is tempting to use
the smooth implicit-function schema \cite[Theorem VII.3]{Violet} to find its dominant singularity
and asymptotic expansion.
We can however not proceed like this since the expansion of $G$ contains negative coefficients,
contradicting \cite[Hypothesis $(\bm{I_2})$ p.~468]{Violet}. 
This explains the indirect path used here. 
In short, the system \eqref{eq:SpecifDH_reduced} has the advantage of having nonnegative coefficients:
it is used to prove without effort that all series have square-root singularities.
On the other hand, $F$ is defined by a single equation,
giving simpler computations to determine explicitly the coefficients in its singular expansion.
\end{remark}

In the sequel we also need the asymptotic expansion of $\DK$, $\DSX$ and its derivative.
Using $\DSX=\frac{F^2}{1+F}$, \cref{prop:expansion_F} and singular differentiation (\cite[Theorem VI.8]{Violet}) 
we get
that $\DSX$ and $\DSX'$ are $\Delta$-analytic and that
\begin{align}
\DSX(z)&=\DSX(\rho) - \gamma_X \sqrt{1-z/\rho} +O\big(1-z/\rho\big),\label{eq:DevSX}\\
\DSX'(z)&=\frac{ \gamma_X }{2\rho}(1-z/\rho)^{-1/2} +O\big(1\big),\label{eq:DevSXprime}
\end{align}
where:
\begin{itemize}
\item $\DSX(\rho)=\frac{F(\rho)^2}{1+F(\rho)}=\frac{2-\sqrt{3}}{1+\sqrt3}$;
\item $\gamma_X = \frac{\partial \DSX}{\partial F}(F(\rho)) \, \gamma_F 
=\left( 1-\frac{1}{(1+F(\rho))^2}\right) \, \gamma_F = \frac{\sqrt6}{2 \sqrt{3+2\sqrt{3}}}\, \sqrt{\rho}$.
\end{itemize}
Similarly we obtain 
that $\DK$ is $\Delta$-analytic and that
\begin{align}
\DK(z)&=\frac{F(\rho)}{2(1+F(\rho))} -\frac{\rho}{2} - \gamma_K \sqrt{1-z/\rho} +O\big(1-z/\rho\big),\label{eq:DevSC}
\end{align}
where $\gamma_K= \frac{\partial \DK}{\partial F}(F(\rho)) \, \gamma_F 
= \frac{1}{2(1+F(\rho))^2} \, \gamma_F = \frac{1}{ \sqrt{6+4\sqrt 3}} \sqrt\rho$.

Summing, we have for $D$ the following expansion:
\begin{equation}
D(z)=D(\rho) - \gamma_D  \sqrt{1-z/\rho} +O\big(1-z/\rho\big),\label{eq:DevD}
\end{equation}
where $\gamma_D=\frac{2+\sqrt 3}{\sqrt{6+4\sqrt 3}} \sqrt\rho$.

\section{DH-trees with a marked leaf}\label{Sec:TreesWithMarkedLeaves}

In this section, we introduce and analyze combinatorial classes of DH-trees
with a marked leaf and certain conditions. 
This is a first step in the proof of \cref{th:GrosTheoreme} for unconstrained DH graphs ($f=d$).
Indeed,
the classes studied here are building blocks in the decomposition of trees with several
marked leaves, which we will consider in the next section in order to study distance matrices
of uniform random DH graphs.

\subsection{A combinatorial system of equations for DH-trees with a marked leaf}
\label{ssec:combinatorial_dec_one_marked_leaf}

\begin{definition}
Let $T$ be a DH-tree and $v$ a vertex of $T$ different from its root (note that $v$ may be a leaf). 
Let $p$ be the parent of $v$ in $T$. 
Informally, the \emph{cotype} of $v$ is the type that $p$ would have if the root were in $v$. 
More precisely, 
\begin{itemize}
 \item if $p$ is of type $\KK$, then $v$ is of cotype $\KK$; 
 \item if $p$ is of type $\SSC$, then $v$ is of cotype $\SSX$; 
 \item if $p$ is of type $\SSX$ and $v$ is the distinguished child of $p$, then $v$ is of cotype $\SSC$;
 \item if $p$ is of type $\SSX$ and $v$ is not the distinguished child of $p$, then $v$ is of cotype $\SSX$. 
\end{itemize}
\end{definition}
The reader is invited to look at the example of \cref{fig:Example_cotype}.

\begin{figure}[htbp]
\includegraphics[width=5cm]{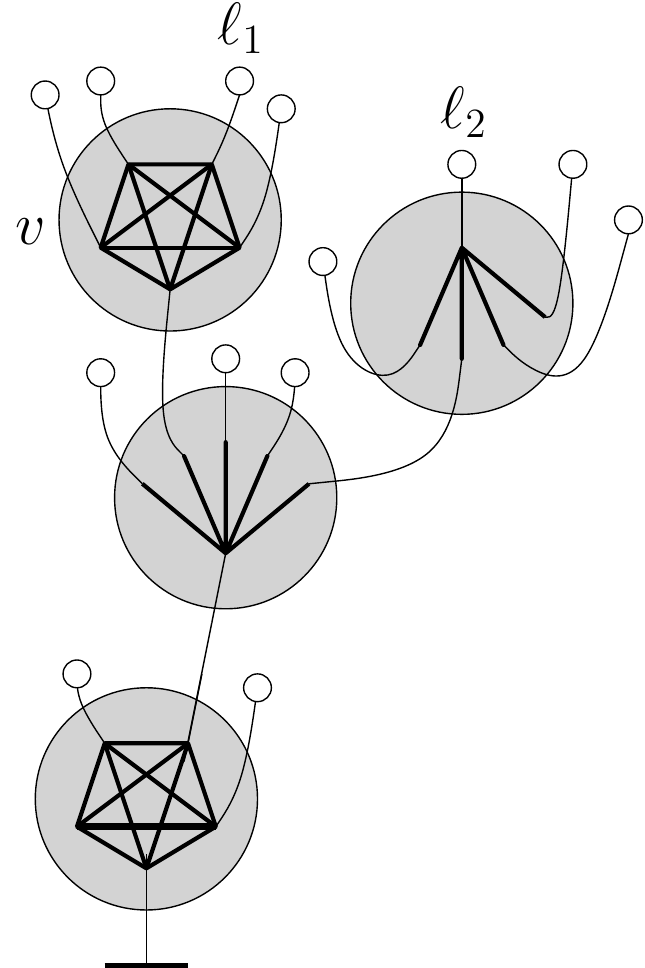}
\caption{In this DH-tree, the leaf $\ell_1$ has cotype $\KK$, the leaf $\ell_2$ has cotype $\SSC$ 
and the node $v$ has cotype $\SSX$ (its parent has type $\SSC$
 but if we would re-root the tree in $v$,
it would have type $\SSX$).}
\label{fig:Example_cotype}
\end{figure}
\begin{definition}
Let $a,b \in \{\KK,\SSC,\SSX\}$. 
We define $\mDab$ as the set of DH-trees with one marked leaf of cotype $b$, whose root-node is of type $a$. 
\end{definition}
We further set, for $a,b \in \{\KK,\SSC,\SSX\}$,
\begin{align*}
\mDap&=\mDaK + \mDaC +  \mDaX;\\
\mDpb&=\mDKb + \mDCb + \mDXb.
\end{align*}
In other words, a bullet as index (resp. exponent) denotes an unconstrained type of the root-node (resp. cotype of the marked leaf).
\medskip

We now introduce the following statistics.
Let $(T,\ell)$ be a DH-tree with one marked leaf. 
We denote $\jump(T,\ell)$ the number of jumps on the path from the marked leaf $\ell$ to the root-leaf in $T$
(in particular, the root-node might be a jump in this path, see~\cref{fig:Schema_ProofDXC}).

We consider (exponential) bivariate generating series of families of DH-trees
 with one marked leaf with respect to the size
(variable $z$) and to the number of jumps (variable $u$).
Namely, for $a,b \in  \{\bullet,\KK,\SSC,\SSX\}$,
\[\Dab (z,u) = \sum_{(T,\ell) \in \mDab} \frac{z^{|T|}}{|T|!} u^{\jump(T,\ell)}.\]
We take the convention that for a DH-tree with a marked leaf $(T,\ell)$, 
its size $|T|$ is the number of unmarked leaves of $T$ 
(in other words, the marked leaf is not counted).

\begin{proposition}
\label{prop:system_markedleaf}
The bivariate series $\Dab(z,u)$ for $a,b \in  \{\KK,\SSC,\SSX\}$ are solutions of the following systems of equations:
\begin{align}
\label{Syst:DaK}
& \begin{cases}
\DKK=(1+\DCK+\DXK) \, \exp_{\ge 1}(\DSC+\DSX+z);\smallskip\\
\DXK=(\DCK+\DKK) \, \exp_{\ge 1}(\DSX+\DK+z) \\
\qquad \qquad \qquad+ u \cdot (\DKK+\DXK) \, (\DSC+\DK+z)  \, \exp(\DSX+\DK+z);\smallskip\\
\DCK=(\DKK+\DXK) \, \exp_{\ge 1}(\DSX+\DK+z);
\end{cases}\smallskip
  \\
  \label{Syst:DaX}
&  \begin{cases}
\DKX=(\DXX+\DCX) \, \exp_{\ge 1}(\DSC+\DSX+z); \smallskip\\
\DXX=(\DCX+\DKX) \, \exp_{\ge 1}(\DSX+\DK+z)\\
\qquad \qquad \qquad+u \cdot (1+\DKX+\DXX) \, (\DSC+\DK+z) \,\exp(\DSX+\DK+z);\smallskip\\
\DCX=(1+\DKX+\DXX)\, \exp_{\ge 1}(\DSX+\DK+z);
\end{cases} \smallskip\\
  \label{Syst:DaC}
& \begin{cases}
\DKC=(\DCC+\DXC) \, \exp_{\ge 1}(\DSC+\DSX+z);\smallskip\\
\DXC=(1+\DCC+\DKC) \, \exp_{\ge 1}(\DSX+\DK+z)\\
\qquad \qquad \qquad+u \cdot (\DKC+\DXC)\, (\DSC+\DK+z) \,\exp(\DSX+\DK+z);\smallskip\\
\DCC=(\DKC+\DXC) \, \exp_{\ge 1}(\DSX+\DK+z).
\end{cases}
\end{align}
\end{proposition}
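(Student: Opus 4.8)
The plan is to establish the three systems simultaneously by a case analysis on the structure of the root-node and the position of the marked leaf. Let $(T,\ell)$ be a DH-tree with one marked leaf, rooted at an internal node of type $a\in\{\KK,\SSC,\SSX\}$. The marked leaf $\ell$ sits below exactly one child $c$ of the root-node, call it the \emph{spine child}; all other children are ordinary DH-trees attached via a $\Set$-construction, subject to the type constraints \eqref{def:DH_tree_Concrete}--\eqref{def:DH_tree_Concrete2}. The spine child $c$ is itself a DH-tree with a marked leaf (the same $\ell$), so it is enumerated by $\Dab[c]{\cdot}{b}$ for the appropriate root-type of $c$. The constraint that $\ell$ has cotype $b$ forces, by the definition of cotype, a constraint on the type of $c$ relative to the type $a$ of the root: one checks case by case which child-type $a'$ gives cotype $b$ when the parent has type $a$. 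For instance, a child of a type-$\KK$ node always has cotype $\KK$; a child of a type-$\SSC$ node always has cotype $\SSX$; the distinguished child of a type-$\SSX$ node has cotype $\SSC$ while the other children have cotype $\SSX$. This is exactly what couples the three systems together and explains why, e.g., $\DKK$ appears with the ``$1+$'' correction in the equation for $\DXK$ (the spine child may itself be the marked leaf, a tree of ``size $0$'').

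First I would fix $a=\KK$ and derive \eqref{Syst:DaK} in detail, treating the other two values of $a$ by the same bookkeeping. When the root is of type $\KK$: its children are nonempty (at least two of them), none of type $\KK$, so each non-spine child contributes to $\exp_{\ge ?}(\DSC+\DSX+z)$ — the $z$ accounting for a leaf-child, $\DSC$, $\DSX$ for subtrees rooted at a $\SSC$ or $\SSX$ node. The spine child $c$ has cotype $\KK$ (forced), so $c$ is enumerated by $\DpK=1+\DCK+\DXK$ — the $1$ is the degenerate case $c=\ell$ itself, and $c$ cannot be of type $\KK$ (child of a $\KK$ node). Since the root-node of type $\KK$ is never a jump on the spine (being a clique, the pair of spine edges is always an edge of the decoration), there is no factor of $u$ here, and the non-spine children must number at least one (total degree $\ge 2$), giving $\exp_{\ge 1}$. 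The equations for $\DXK$ and $\DCK$ are obtained the same way but now the root is of type $\SSX$ or $\SSC$; the crucial extra ingredient is the jump bookkeeping: the root-node of type $\SSX$ is a jump on the spine precisely when the spine does not pass through the center, i.e. when the spine child is \emph{not} the distinguished child — this is the source of the $u\cdot(\cdots)$ term, where $(\DSC+\DK+z)$ counts the distinguished (non-spine) child that carries the center, and the remaining factor $\exp(\DSX+\DK+z)$ (now allowing zero further children, hence no subscript) counts the rest. I would carefully verify, in each line, which $\exp_{\ge r}$ subscript is correct by checking the minimum number of non-spine children imposed by the degree-$\ge 3$/at-least-$2$-children conditions and by whether the spine child can be degenerate.

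The main obstacle I anticipate is not any single hard idea but the sheer combinatorial bookkeeping: getting right, simultaneously in nine equations, (i) which root-type/child-type pairs realize a given cotype $b$, (ii) whether the spine child is allowed to be degenerate (the ``$1+$'' terms), (iii) the exact $\exp_{\ge r}$ threshold for the set of non-spine children, and (iv) exactly when the root-node contributes a jump (hence a factor $u$), which interacts with the distinguished-child structure of $\SSX$-nodes. A clean way to organize this is to note that the ``body'' of each equation — the product of a child-factor with an $\exp$ of $(\text{two types}+z)$ — only depends on the root-type $a$ and is insensitive to $b$, while $b$ only dictates which of $\{1,\DCK,\DXK\}$-type summands populate the spine-child factor; separating these two concerns reduces the verification to a short finite check for each $a$, which I would present as a small table rather than prose. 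Once \eqref{Syst:DaK} is checked, \eqref{Syst:DaX} and \eqref{Syst:DaC} follow by the identical argument with the roles of the cotype permuted, so I would state them as ``by the same analysis'' and only highlight the one or two places where the degenerate-spine-child term moves.
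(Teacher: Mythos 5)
Your plan follows the paper's approach exactly: decompose at the root-node by separating the child subtree containing the marked leaf (the ``spine child'') from the remaining children, determine whether the root-node is a jump on the spine by examining which marker vertex of $\Gamma_v$ the spine edge is attached to (for an $\SSX$-root this happens precisely when the spine does not pass through the center, giving the factor $u$), and count the non-spine children with a $\Set$ construction. The paper writes out the $\DXC$ equation in detail and declares the other eight analogous; you propose to work out the cotype-$\KK$ triple $\DKK,\DXK,\DCK$ instead, which is the same method with a different illustrative example.

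Two small errors that should be fixed before you write this up. First, you claim the ``$1+$'' correction appears in the equation for $\DXK$; it is actually in the equation for $\DKK$. There is no $1+$ in the $\DXK$ equation because a leaf attached directly to an $\SSX$ root can only have cotype $\SSC$ (if it is the distinguished child) or $\SSX$ (otherwise), never $\KK$, so the degenerate spine child is excluded. Second, you write $\DpK = 1+\DCK+\DXK$, but in the paper's notation $\DpK = \DKK+\DCK+\DXK$. The expression $1+\DCK+\DXK$ is indeed the correct spine-child factor in the $\DKK$ equation -- the $1$ accounts for the degenerate case where the spine child is the marked leaf itself, and $\DKK$ is excluded because a $\KK$-node cannot have a $\KK$-child -- but it should not be written as $\DpK$.
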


\begin{proof}
We prove in details the case of $\DXC$ (second equation in the system \eqref{Syst:DaC}). 
The eight other equations are proved in a similar way.

Hence we consider a DH-tree $T$ of type $\SSX$ with a marked leaf of cotype $\SSC$. We can decompose $T$ as a root-node $v$, to which several subtrees are attached. (Notations are summarized in \cref{fig:Schema_ProofDXC}.) 
The subtrees attached to $v$ are:
\begin{itemize}
\item The subtree $T'$ containing the marked leaf. In order to keep track of variable $u$ we need to consider two cases. 
\begin{itemize}
\item First, $T'$ may be attached to the center of $\Gamma_v$ (Case A of \cref{fig:Schema_ProofDXC}). 
In this case, there is no jump in $v$. 
Also, $T'$ (if not reduced to a leaf) is of type $\KK$ or $\SSC$. 
Note that $T'$ may also be reduced to a leaf (hence, the marked leaf), since a leaf attached to the center of $\Gamma_v$ has indeed cotype $\SSC$. 
\item Otherwise, $T'$ is attached to an extremity of $\Gamma_v$ (Case B of \cref{fig:Schema_ProofDXC}).
In this case, there is a jump in $v$.
Here, $T'$ can be of type $\KK$ or $\SSX$, and $T'$ cannot be reduced to a leaf since a leaf attached to an extremity of $\Gamma_v$ would have cotype $\SSX$.
\end{itemize}
\item Attached to every (other) extremity of $v$ one has a tree of type $\KK$ or $\SSX$ or a leaf. 
\item Attached to the center of $v$ (if this is not where $T'$ is attached, \emph{i.e.} in Case B in \cref{fig:Schema_ProofDXC}) there is a tree of type $\KK$ or $\SSC$ or a leaf.
\end{itemize}

\begin{figure}[htbp]
\includegraphics[width=10cm]{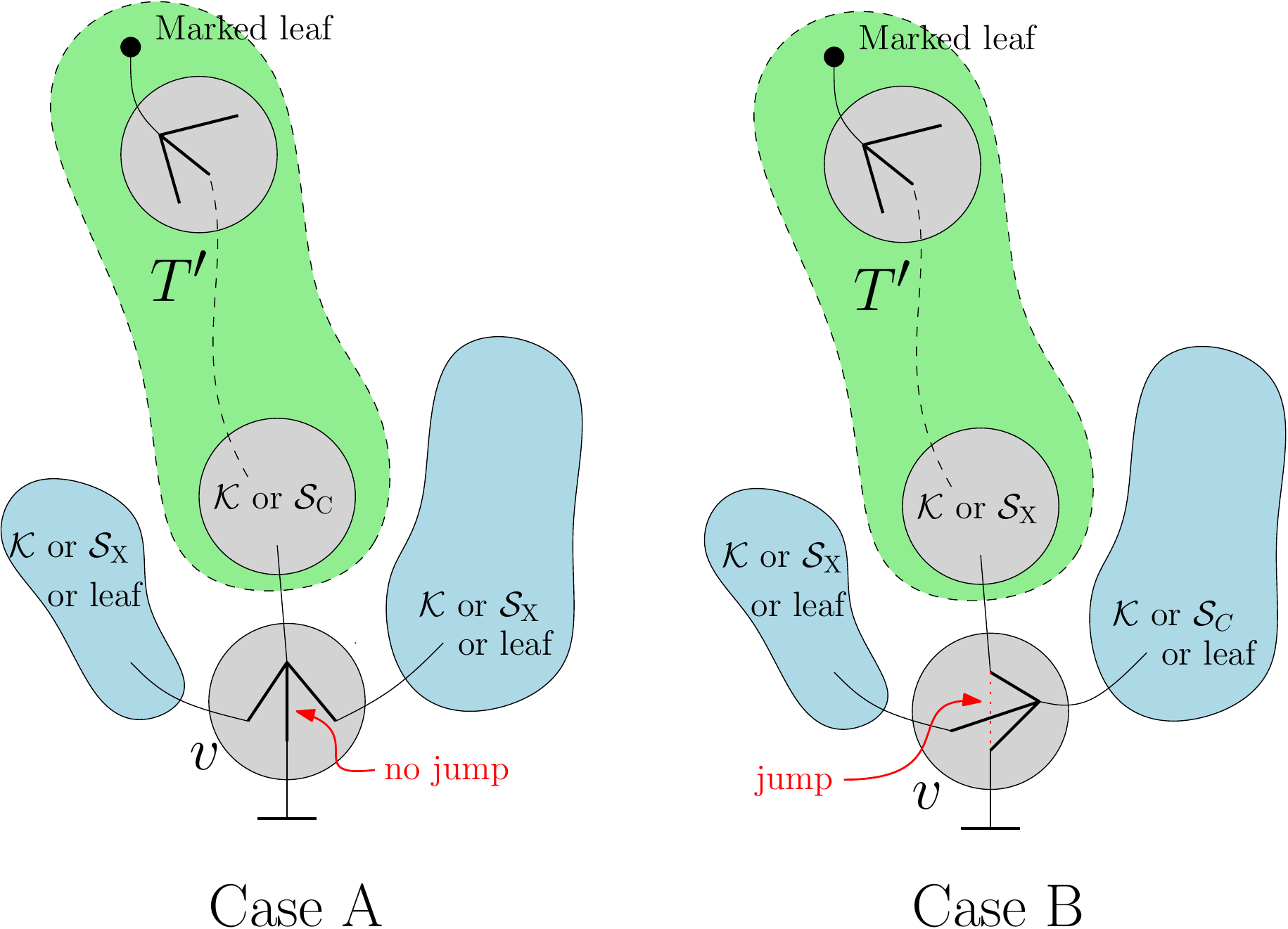}
\caption{Decomposition of a tree in $\mDXC$.}
\label{fig:Schema_ProofDXC}
\end{figure}

We now translate this decomposition on generating functions. 
According to the case analysis above, $T'$ is counted by:
\begin{itemize}
\item in Case A: $1+\DCC+\DKC$ (since a single marked leaf, counted by $1$, is allowed for $T'$);
\item in Case B: $\DKC+\DXC$.  
\end{itemize}
The remaining trees attached to $v$ are counted by:
\begin{itemize}
\item in Case A: $\exp_{\ge 1}(\DSX+\DK+z)$ (because they form a nonempty unordered sequence of trees in $\mDSX + \mDK + \mathcal{Z}$)
\item in Case B: $(\DSC+\DK+z) \,\exp(\DSX+\DK+z)$ (with a distinguished tree attached to the center which is either in $\mDSC$ or in $\mDK$ or a leaf, and other trees which form an unordered sequence of trees in $\mDSX + \mDK +\mathcal{Z}$).
\end{itemize}
Finally, a factor $u$ appears in Case B to take into account the jump in $v$.
Hence
\begin{multline*}
\DXC=(1+\DCC+\DKC) \, \exp_{\ge 1}(\DSX+\DK+z)\\
+u \cdot (\DKC+\DXC)\, (\DSC+\DK+z) \,\exp(\DSX+\DK+z). \qedhere
\end{multline*}
\end{proof}

\subsection{Resolution of the system}
\label{ssec:sol_Dab}
Recall (see \cref{ssec:singularity_analysis_NonMarked}) that 
\[\DSC=\DK, \quad\exp_{\ge 1}(\DK+\DSX+z)=F\ \text{ and }\DSC+\DK+z=2\DK+z=\tfrac{F}{1+F}.\]
It implies $ (\DSC+\DK+z) \,\exp(\DSX+\DK+z)=F$, allowing to simplify the systems \labelcref{Syst:DaK,Syst:DaX,Syst:DaC} as follows: 
\begin{align}
\label{Syst:DaK2}
& \begin{cases}
\DKK=(1+\DCK+\DXK) \, F;\smallskip\\
\DXK=(\DCK+\DKK) \, F 
+ u \, F\cdot (\DKK+\DXK) ;\smallskip\\
\DCK=(\DKK+\DXK) \, F;
\end{cases}\smallskip
  \\
  \label{Syst:DaX2}
&  \begin{cases}
\DKX=(\DXX+\DCX) \, F; \smallskip\\
\DXX=(\DCX+\DKX) \, F+u \, F\cdot (1+\DKX+\DXX) ;\smallskip\\
\DCX=(1+\DKX+\DXX)\, F;
\end{cases} \smallskip\\
  \label{Syst:DaC2}
& \begin{cases}
\DKC=(\DCC+\DXC) \, F;\smallskip\\
\DXC=(1+\DCC+\DKC) \, F+u \, F \cdot (\DKC+\DXC);\smallskip\\
\DCC=(\DKC+\DXC) \, F.
\end{cases}
\end{align}
Solving the system\footnote{see Maple worksheet.} gives the following formulas (put under a suitable form
for the subsequent asymptotic analysis): 
\begin{align}
\DKK&=\frac{F}{F+1}+\frac{F^2}{(1+F)(1-2F)-Fu}; \label{eq:sol_DKK}\\
\DXX&=\frac{-1}{F+1}+\frac{(1-F)^2}{(1+F)(1-2F)-Fu}; \label{eq:sol_DXX}\\
\DCC&=\frac{F^2}{(1+F)(1-2F)-Fu}; \label{eq:sol_DCC}\\
\DKX=\DXK&=\frac{-F}{F+1}+\frac{F\, (1-F)}{(1+F)(1-2F)-Fu}; \label{eq:sol_DKX}\\
\DKC=\DCK&=\frac{F^2}{(1+F)(1-2F)-Fu}; \label{eq:sol_DKC}\\
\DXC=\DCX&=\frac{F(1-F)}{(1+F)(1-2F)-Fu}.\label{eq:sol_DXC}
\end{align}
\begin{remark}
Symmetries $D_a^b=D_b^a$ in above equations can easily be explained combinatorially. Indeed, we can see a DH-tree with root-leaf $r$ and a marked leaf $\ell$ as a DH-tree rooted in $\ell$ where $r$ is a marked leaf; doing so, the type of the (old) root becomes the cotype of $r$ and the cotype of $\ell$ becomes the type of the (new) root.
\end{remark}

Recalling that $F$ depends only on $z$ (not on $u$), in each case, the series can be written under the form
\begin{equation}
\label{eq:formeDab}
\Dab=Q_a^b(z)+\frac{M_a^b(z)}{1-u\, H_a^b(z)},
\end{equation}
where $Q_a^b$, $M_a^b$ and $H_a^b$ are rational functions in $F$.
For example, looking at \cref{eq:sol_DKK}, we have 
\[Q_\KK^\KK=\frac{F}{F+1},\ 
M_\KK^\KK=\frac{F^2}{(1+F)(1-2F)}\text{ and }H_\KK^\KK=\frac{F}{(1+F)(1-2F)}.\]
Similar formulas are easily written for other $a,b \in \{\KK,\SSC,\SSX\}$,
looking at \cref{eq:sol_DXX,eq:sol_DCC,eq:sol_DKX,eq:sol_DKC,eq:sol_DXC}.

Interestingly (and we shall later use these remarks), $H_a^b=H=\frac{F}{(1+F)(1-2F)}$ is the same for all $a,b \in \{\KK,\SSC,\SSX\}$ and  $M_a^b$ factorizes as $M_a^b=\Lambda_a \Lambda_b$, where
\[ \Lambda_\KK=\Lambda_{\SSC}= \frac{F}{\sqrt{(1+F)(1-2F)}};\quad  \Lambda_{\SSX}= \frac{1-F}{\sqrt{(1+F)(1-2F)}}.\]
Recall from \cref{eq:Frho}, that $1-2 F(\rho)=  2 F(\rho)^2$  so that $(1+F(z))(1-2F(z)) \notin (-\infty,0)$ for $z$ close to $\rho$. Hence, the definitions of $\Lambda_\KK, \Lambda_{\SSC}$ and $\Lambda_{\SSX}$ make sense near $\rho$; we shall only use them in this domain. 

Moreover all these formulas immediately extend to the case where $a$ or $b$ or both is/are equal
to $\bullet$ (unconstrained type of the root-node or cotype of the marked leaf),
with the natural convention that
\begin{align*}
H_\bullet^b&= H_a^\bullet =H ;\\
M_\bullet^b &=M^b_\KK+M^b_{\SSC}+M^b_{\SSX},\\
\Lambda_\bullet &=\Lambda_\KK+\Lambda_{\SSC}+ \Lambda_{\SSX} ;
\end{align*}
and conventions similar to the second line for $M_a^\bullet$, $Q_a^\bullet$ and $Q_\bullet^b$.

Since $F$ has nonnegative coefficients and $2F(\rho)=\sqrt{3}-1<1$, the denominators of $Q_a^b(z)$, $M_a^b(z)$, $\Lambda_a^b(z)$ and $H(z)$
 are positive for $z$ in $[0,\rho]$ and thus,
 the series $Q_a^b$, $M_a^b$, $\Lambda_a^b(z)$ and $H$ all have radius of convergence
 $\rho$ and a square-root singularity in $\rho$, inherited from that of $F$.
Later (in the proof of Proposition \ref{prop:theoreme_local_limite_rTn})
 the function $H$ will play a particular role in the asymptotic analysis so let us now compute its  expansion at $\rho$:  
 \[H(z)=H(\rho) -\gamma_H \sqrt{1-z/\rho} +O(1-z/\rho),\]
where
\begin{align}
H(\rho)&=\frac{F(\rho)}{(1+F(\rho))(1-2F(\rho))}=1,\label{Hrho=1}\\
\gamma_H&=\frac{\partial H}{\partial F}(F(\rho)) \, \gamma_F =
\left( \frac1{3(1+F(\rho))^2} +\frac2{3(1-2F(\rho))^2}\right) \, \gamma_F  \label{eq:gamma_H}\\
& =\frac{ 2 \cdot (3-\sqrt3)}{\sqrt{6+4\sqrt{3}}\, (2-\sqrt3)^2} \sqrt{\rho}\notag 
\end{align}
whose numerical estimate is $\gamma_H \approx 3.9258$ (see Maple worksheet).

\section{$k$-point distances and induced subtrees}\label{Sec:TreesWithInducesSubtrees}

The goal of this section is to obtain the joint convergence in distribution of distances between marked leaves in a uniform DH-tree (see \cref{Coro:ConvergenceEnriched} below). 
This allows us to complete the proof of \cref{th:GrosTheoreme} in the case of unconstrained DH graphs
($f=d$).

\subsection{Marked leaves and induced subtrees}\label{ssec:51}
In this section, we consider DH-trees with $k$ marked leaves $(\ell_1,\dots,\ell_k)$ with the following convention.
\begin{definition}
\label{def:marked_DH_tree}
 A DH-tree of size $n$ with $k$ marked leaves is a nonplane rooted tree $T$ such that
\begin{itemize}
\item $T$ has $n$ nonmarked leaves labeled $1,\dots,n$;
\item additionally, $T$ has $k$ ordered leaves carrying marks $(\ell_1,\dots,\ell_k)$;
\item items ii) to v) p.~\labelcpageref{def:DH_tree_Concrete,def:DH_tree_Concrete2} are satisfied.
\end{itemize}
\end{definition}
Equivalently, it is a DH-tree of size $n+k$ where leaves with labels $n+1,\dots,n+k$
are seen as marked and get marks $\ell_1$, \dots, $\ell_k$, respectively.
These marked leaves are not counted in the size.
With this convention, the exponential generating series of DH-trees with $k$ marked leaves
is $D^{(k)}(z)$ (the $k$-th derivative of $D$).
\cref{prop:SplitTreeBij}
 is immediately rephrased as follows.
\begin{proposition}\label{prop:SplitTreeBij_rephrased}
Labeled DH graphs of size $n+k+1$ are in bijection with DH-tree
of size $n$ and $k$ marked leaves (when $n+k+1 \geq 3$).
\end{proposition}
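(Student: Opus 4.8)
The plan is to derive this as a direct relabeling of \cref{prop:SplitTreeBij}, after unwinding \cref{def:DH_tree} and \cref{def:marked_DH_tree}; no new combinatorial construction is needed. Throughout I read ``labeled DH graph of size $N$'' as a DH graph with vertex set $\{0,1,\dots,N-1\}$, the number of labeled DH graphs being insensitive to which ground set of cardinality $N$ one uses.

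The first ingredient is the baseline bijection already recorded just after \cref{def:DH_tree}: for every $m\geq 2$, DH-trees of size $m$ are in bijection with labeled DH graphs of size $m+1$. Concretely, given such a graph $G$ (so of size $m+1\geq 3$), \cref{prop:SplitTreeBij} yields a unique reduced clique-star tree $\tau$ with $\Gr(\tau)=G$; its leaf set is the vertex set $\{0,1,\dots,m\}$ of $G$, so, recalling that in \cref{def:DH_tree} the leaf labeled $0$ is by convention the root-leaf, $\tau$ \emph{is} a DH-tree of size $m$. Conversely $\Gr$ sends any DH-tree of size $m$ (its root-leaf status forgotten) to a DH graph on $\{0,\dots,m\}$, since $\Gr$ of a clique-star tree is always a DH graph (recall \cref{sec: split}); and these two maps are mutually inverse --- $\Gr(\tau)=G$ by the choice of $\tau$, while the uniqueness clause of \cref{prop:SplitTreeBij} forces that a reduced clique-star tree is recovered from the DH graph it produces.

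Then I would set $m=n+k$, observing that $n+k+1\geq 3$ is precisely $m\geq 2$, so that labeled DH graphs of size $n+k+1$ are in bijection with DH-trees of size $n+k$, that is, reduced clique-star trees with leaves labeled $0,1,\dots,n+k$ and root-leaf $0$. Finally I would invoke the reformulation stated right after \cref{def:marked_DH_tree}: reinterpreting the leaves labeled $n+1,\dots,n+k$ as the ordered marked leaves $\ell_1,\dots,\ell_k$ and counting only the leaves $1,\dots,n$ in the size turns such a tree into a DH-tree of size $n$ with $k$ marked leaves, and this is a bijection with an obvious inverse, as it leaves the structural conditions ii)--v) defining DH-trees untouched. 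Composing the two bijections yields the proposition.

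I do not expect a genuine obstacle: everything substantive is carried by \cref{prop:SplitTreeBij}, and what remains is bookkeeping. The one point needing care is the size arithmetic --- marked leaves and the root-leaf are not counted in the ``size'' on the tree side, which is exactly why a labeled DH graph of size $n+k+1$ corresponds to a DH-tree of size $n$ carrying $k$ marks --- and this is tracked transparently through $m=n+k$ and $m+1=n+k+1$.
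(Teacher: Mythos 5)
Your proof is correct and follows exactly the route the paper intends: the paper itself treats the proposition as an "immediate rephrasing" of \cref{prop:SplitTreeBij}, via the baseline bijection between DH-trees of size $m$ and DH graphs on $\{0,\dots,m\}$ (just after \cref{def:DH_tree}) and the reinterpretation of leaves $n+1,\dots,n+k$ as marked (just after \cref{def:marked_DH_tree}). Your write-up simply makes explicit the size bookkeeping ($m=n+k$, $m\ge 2$ iff $n+k+1\ge 3$) that the paper leaves implicit.
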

To simplify notation, we write $\bm \ell=(\ell_1,\dots,\ell_k)$. 
\bigskip

We recall the definition of induced subtree.
\begin{definition}
Let $T$ be a DH-tree with $k$ marked leaves $\bm \ell$.
We call {\em essential vertices} of $T$ (w.r.t the marked leaves $\bm \ell$)
its root-leaf, its $k$ marked leaves $\bm \ell$ and 
their first common ancestors.
Then,
the subtree of $T$ induced by $\bm \ell$ is obtained as follows:
\begin{itemize}
\item its vertices are the essential vertices of $T$;
\item its genealogy (ancestor/descendant relation) is inherited from that of $T$.
\end{itemize}
\end{definition}

\cref{fig:InducedSubtree} illustrates this definition. We remark that the subtree of $T$ induced
by $k$ marked leaves $\bm \ell$ is naturally rooted at the vertex corresponding to the root-leaf of $T$.
This vertex is always of degree 1, and will be called root-leaf of the induced subtree.
\begin{figure}[htbp]
\includegraphics[width=10cm]{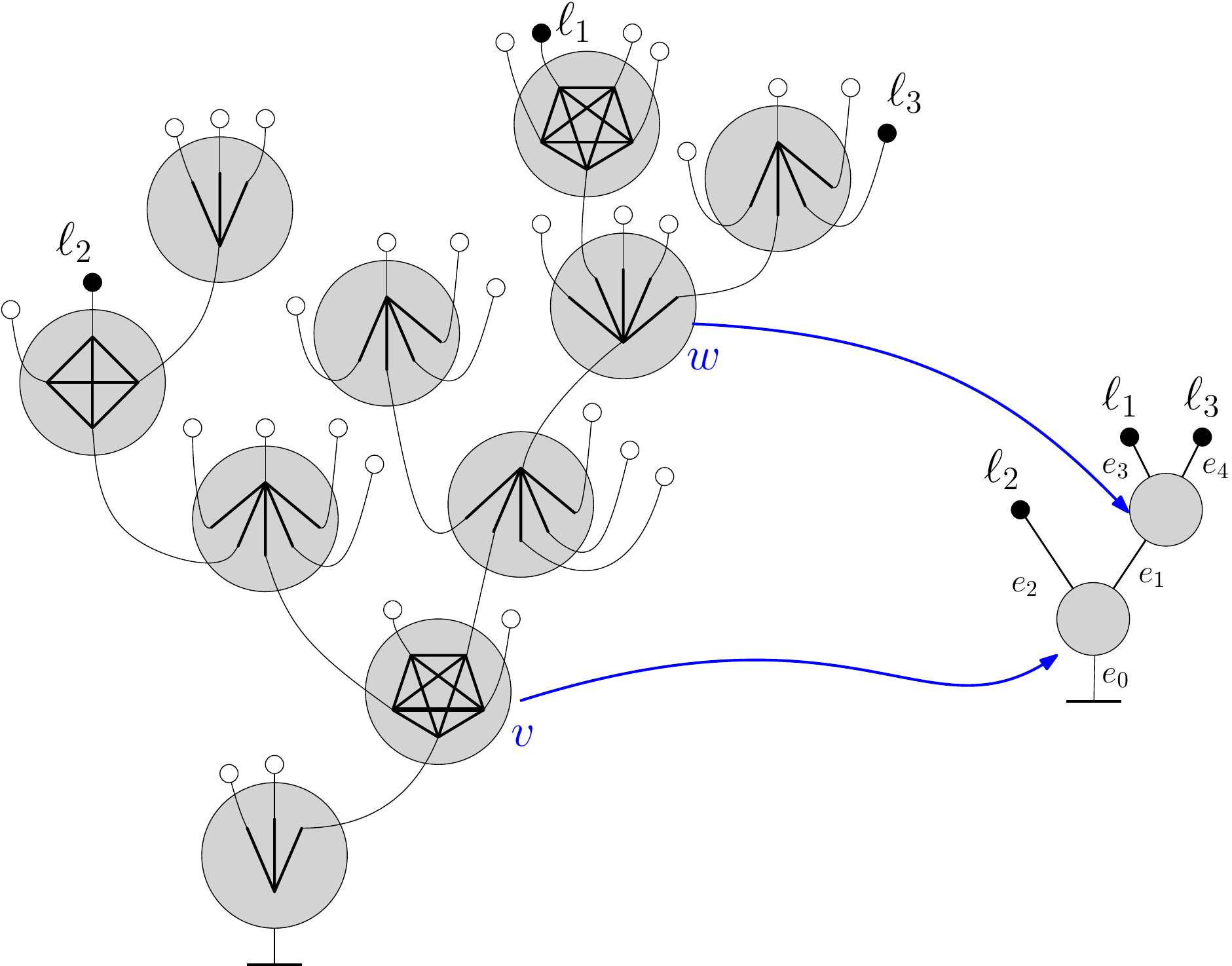}
\caption{Left: A DH-tree $T$ of size $n=28$ with $k=3$ marked leaves. The nodes $v$ and $w$ are the first common ancestors of $\ell_1,\ell_2$ and $\ell_3$.  Right: The subtree $t_0$ of $T$ induced by $(\ell_1,\ell_2,\ell_3)$. We have highlighted the correspondence between first common ancestors in $T$ and internal vertices of the induced subtree $t_0$.}
\label{fig:InducedSubtree}
\end{figure}

We now enrich the notion of induced subtrees to record the number of jumps along some paths of $T$.
Consider a DH-tree $T$ with $k$ marked leaves $\bm\ell$.
Let $t_0$ be the associated induced subtree. Each edge $e$ in $t_0$ corresponds 
to a path between two consecutive essential vertices of $T$. 
We define $\jump_e(T;\bm \ell)$ as the number of jumps of the path corresponding to $e$,
with the convention that essential vertices are not counted as jumps (but note that the root-node of $T$ can be a jump).
We call {\em enriched induced subtree} of $(T,\bm \ell)$ the induced subtree $t_0$,
with the quantities $\jump_e(T;\bm \ell)$ attached to its edges.
It will be convenient to fix for each tree $t$ with $k$ leaves
an enumeration $(e_0,e_1,\dots)$ of its edges 
such that $e_0$ is the edge adjacent to the root-leaf
(for instance a breath-first traversal of the tree with an arbitrary planar embedding). 
Then 
the enriched induced subtree of $(T,\bm \ell)$ can be written as a tuple 
$(t_0,a_0,\dots,a_m)$, where $t_0$ is the induced subtree of $(T,\bm \ell)$ 
and $a_i=\jump_{e_i}(T;\bm \ell)$. 
In the following we denote
$
r(T,\bm \ell):=(t_0,a_0,\dots,a_m).
$

\subsection{Combinatorial decomposition}
Recall that a $k$-proper tree is an (unrooted) nonplane tree with $k+1$ leaves
where each internal node has degree $3$ (with one leaf considered as the root-leaf and the other leaves denoted $\{\ell_1,\dots,\ell_k\}$). 
It is easily observed that a $k$-proper tree has $k+1$ leaves, $k-1$ internal vertices and $2k-1$ edges. 
It is also a standard fact (see, \emph{e.g.}, \cite{AldousCRTIII}) that the cardinality of the set of $k$-proper trees is exactly $(2k-3)!!$ ,
where we recall that $(2k-3)!!$ is the product of all odd positive integers less than or equal to $2k-3$.
Indeed, a $k$-proper tree can be obtained in a unique way from a $(k-1)$-proper tree by selecting
one of its $2k-3$ edges and grafting in the middle a new edge with a leaf $\ell_k$ at its extremity.

Let us fix a $k$-proper tree $t_0$.
We consider the following class of marked DH-trees.
\begin{definition}
\label{def:Dt}
We let $\mDt$ be the labeled combinatorial class of DH-trees $T$
with $k$ marked leaves $\bm \ell$ such that:
\begin{enumerate}
\item the subtree of $T$ induced by $\bm \ell$ is $t_0$;
\item no two essential vertices of $T$ are neighbors of each other.
\end{enumerate}
\end{definition}
Item ii) is a technical condition to have a nicer combinatorial decomposition
in \cref{eq:decompo_Dt0} below.

Recall that we have fixed an enumeration $(e_0,e_1,\dots,e_{2k-2})$ of the $(2k-1)$ edges of our $k$-proper tree $t_0$, 
in which $e_0$ is the edge attached to the root-leaf of $t_0$.
Consider the following multivariate generating series for $\mDt$: 
\[\Dt(z,u_0,\dots,u_{2k-2})=  \sum_{(T;\bm \ell) \in \mDt} \frac{z^{|T|}}{|T|!} u_0^{\jump_{e_0}(T,\bm \ell)}
\cdots u_{2k-2}^{\jump_{e_{2k-2}}(T,\bm \ell)}.\]

In order to compute the series $\Dt$ we introduce the following new classes of DH-trees. 
For $a,b,c \in \{\bullet, \KK,\SSX,\SSC\}$, let $\J_{a}^{b\,c}$ be the set of DH-trees $T$ with two (ordered) marked leaves such that
\begin{itemize}
\item the two marked leaves are children of the root-node;
\item if $T_1$ is a DH-tree of type $b$,
one can glue $T_1$ on the first 
marked leaf of $T$ (merging the marked leaf and the root-node of $T_1$)
without violating the adjacency restrictions defining DH-trees (conditions iii) to v) p.~\pageref{def:DH_tree_Concrete});
\item the same condition holds with gluing a DH-tree of type $c$ on the second marked leaf;
\item additionally, if $T_0$ is a DH-tree  with a marked leaf of cotype $a$,
one can glue $T$ on the marked leaf of $T_0$
without violating the adjacency restrictions defining DH-trees.
\end{itemize}
\begin{lemma}
\label{lem:J}
The generating function of $\J_{a}^{b\,c}$ is 
\begin{align}
\label{eq:J}
J_{a}^{b\,c}(z)& =  (\Ind_A+\Ind_B+\Ind_C) \exp(\DSX+\DK+z) +\Ind _{\KK \notin \{a,b,c\}}
\exp(\DSX+\DSC+z)\\ & \qquad \qquad + \Ind_{\SSC \notin \{a,b,c\}} (\DSC + \DK+z) \exp(\DK+\DSX+z) \notag\\
&=(\Ind_A+\Ind_B+\Ind_C+\Ind _{\KK \notin \{a,b,c\}}) (1+F) +\Ind_{\SSC \notin \{a,b,c\}} F, \notag
\end{align}
where
$$A=\{(a,b,c) \mid a \neq \SSX, b \neq \SSC, c\neq \SSC \},$$
$$B=\{(a,b,c) \mid b \neq \SSX, a \neq  \SSC,  c\neq \SSC \},$$
$$C=\{(a,b,c) \mid c \neq \SSX, a \neq  \SSC, b \neq \SSC \}.$$
\end{lemma}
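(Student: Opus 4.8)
The plan is to prove \cref{lem:J} by a direct combinatorial decomposition of DH-trees in $\J_a^{b\,c}$ around their root-node, exactly analogous to the proof of \cref{prop:system_markedleaf}, and then to simplify the resulting generating function using the identities already established in \cref{ssec:sol_Dab}. The key point is that a tree in $\J_a^{b\,c}$ consists of a root-node $v$ carrying a type in $\{\KK,\SSC,\SSX\}$, with the two ordered marked leaves $m_1,m_2$ attached directly to $v$ (as leaves, hence contributing no size and no generating-variable weight), together with an arbitrary family of further subtrees hanging off the remaining marker vertices of $\Gamma_v$; each such subtree is either a leaf (weight $z$) or a DH-tree of type $\KK$ or of the appropriate star type, subject to the adjacency restrictions iii)--v) p.~\pageref{def:DH_tree_Concrete}. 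The four ``gluability'' conditions in the definition of $\J_a^{b\,c}$ translate into constraints on which type $v$ is allowed to carry: gluing a type-$b$ (resp. type-$c$) tree onto $m_1$ (resp. $m_2$) is legal precisely when the marker vertex to which $m_1$ (resp. $m_2$) is attached is compatible with type $b$ (resp. $c$), and gluing $T$ onto a cotype-$a$ marked leaf of $T_0$ constrains the type of $v$ relative to $a$.

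First I would enumerate the admissible types of the root-node $v$ as a function of $(a,b,c)$. There are three cases, corresponding to the three marker vertices to which $m_1$, $m_2$ and the edge towards the (hypothetical) parent are attached playing the distinguished/center role of a star, or all three lying in a clique:
\begin{itemize}
\item $v$ is of type $\SSX$ with the edge towards the parent attached to the center's side appropriately — this forces $a\neq\SSX$ (the parent edge leaves through an extremity, so $v$ re-rooted would not be $\SSX$-blocking), $b\neq\SSC$ and $c\neq\SSC$ (the children $m_1,m_2$ are attached to extremities, so a type-$\SSC$ tree cannot be glued there); this is exactly condition $A$. The residual subtrees form an unordered sequence of elements of $\mDSX+\mDK+\mathcal Z$, contributing $\exp(\DSX+\DK+z)=1+F$.
\item Symmetrically, $v$ of type $\SSX$ with $m_1$ attached to the center yields condition $B$ and the same factor $1+F$; and with $m_2$ attached to the center yields condition $C$ and again $1+F$.
\item $v$ of type $\KK$: this is possible iff none of $a,b,c$ equals $\KK$ (a clique node cannot have a clique neighbour, and none of the three glued trees may be of type $\KK$); the residual subtrees form a sequence of elements of $\mDSX+\mDSC+\mathcal Z$, contributing $\exp(\DSX+\DSC+z)=\exp(\DSX+\DK+z)=1+F$ since $\DSC=\DK$.
\item $v$ of type $\SSC$: possible iff none of $a,b,c$ equals $\SSC$; here one residual subtree (attached to the center) lies in $\mDSC+\mDK+\mathcal Z$ and the rest form a sequence of elements of $\mDSX+\mDK+\mathcal Z$, contributing $(\DSC+\DK+z)\exp(\DK+\DSX+z)=F$ by the identity $(\DSC+\DK+z)\exp(\DSX+\DK+z)=F$ recalled in \cref{ssec:sol_Dab}.
\end{itemize}
Summing these contributions gives the first displayed expression in \eqref{eq:J} — the first line collects the $\SSX$ and $\KK$ cases, the second line is the $\SSC$ case — and substituting $\exp(\DSX+\DK+z)=\exp(\DSX+\DSC+z)=1+F$ and $(\DSC+\DK+z)\exp(\DK+\DSX+z)=F$ yields the final closed form $(\Ind_A+\Ind_B+\Ind_C+\Ind_{\KK\notin\{a,b,c\}})(1+F)+\Ind_{\SSC\notin\{a,b,c\}}F$.

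The main obstacle, and the place requiring genuine care rather than routine bookkeeping, is the precise verification that the three sets of adjacency restrictions imposed by the four bullet points in the definition of $\J_a^{b\,c}$ are exactly captured by the events $A,B,C$ and $\{\KK\notin\{a,b,c\}\}$, $\{\SSC\notin\{a,b,c\}\}$ — in particular checking the $\SSX$ case carefully, where one must track which of the two marked children (if any) is the \emph{distinguished} child of $v$, and confirm that the ``parent-side'' marker vertex interacts with the cotype $a$ exactly as the re-rooting description in \cref{ssec:combinatorial_dec_one_marked_leaf} prescribes. I would handle this by treating the type of $v$ as the primary case split, for each type listing the legal configurations of where $m_1$, $m_2$ and the parent-edge attach, reading off the forbidden values of $a$, $b$, $c$ in each configuration, and then checking that the union over configurations of a fixed type reproduces the stated indicator. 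Once this correspondence is pinned down, the generating-function computation is immediate from the decomposition and from the already-established simplifications $\DSC=\DK$, $\exp_{\ge1}(\DK+\DSX+z)=F$ and $(\DSC+\DK+z)\exp(\DSX+\DK+z)=F$. Note that condition ii) in \cref{def:Dt} (no two essential vertices adjacent) is what guarantees that when these $\J$-building-blocks are later spliced together along the edges of $t_0$ there is always room to insert the non-trivial connecting subtrees, so no separate argument is needed here; the lemma itself is purely local to a single root-node.
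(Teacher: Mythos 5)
Your proposal is substantively correct and takes the same approach as the paper: decompose $\J_a^{b\,c}$ by the type of the root-node $v$ (and, when $v$ is a star, by which marker vertex is its center), translate the four gluability conditions into constraints on $(a,b,c)$, and read off the generating function of the residual subtrees. All five sub-cases, their indicator conditions $A$, $B$, $C$, $\{\KK\notin\{a,b,c\}\}$, $\{\SSC\notin\{a,b,c\}\}$, and their generating-function factors $1+F$ or $F$ are identified correctly, and the simplifications via $\DSC=\DK$ and $(\DSC+\DK+z)\exp(\DSX+\DK+z)=F$ are the ones the paper uses.

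However, the type labels in your first and fourth bullets are swapped, and the accompanying explanations are internally inconsistent. In your first bullet, ``the edge towards the parent attached to the center'' is by definition what makes $v$ of type $\SSC$, not $\SSX$; the parenthetical ``the parent edge leaves through an extremity'' contradicts what was just said; and the constraint $a\ne\SSX$ comes from the rule that cotype $\SSX$ forbids a type-$\SSC$ child, not from any $\SSX$-rule. So condition $A$ corresponds to $v$ of type $\SSC$. Symmetrically, in your fourth bullet, a type-$\SSC$ node has its center attached to the parent edge, so it cannot have ``one residual subtree attached to the center''; the case you actually describe --- $v$ a star with the center attached to a residual subtree (hence $m_1$, $m_2$ and the parent all at extremities) --- is $v$ of type $\SSX$ with the distinguished child among the unmarked subtrees, which is what gives $\Ind_{\SSC\notin\{a,b,c\}}\,(\DSC+\DK+z)\exp(\DSX+\DK+z)$. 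The paper's proof organizes the case split exactly as you intend but with the types assigned correctly: $\KK$ root $\to\Ind_{\KK\notin\{a,b,c\}}$; $\SSC$ root $\to\Ind_A$; $\SSX$ root $\to\Ind_B+\Ind_C+\Ind_{\SSC\notin\{a,b,c\}}$ according to whether the center goes to $m_1$, $m_2$, or a residual subtree. Since the conditions and factors you wrote down are the right ones, the final computation stands; you should just fix the type labels and the contradictory parentheticals in those two bullets.
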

\begin{proof}
We consider different cases depending on the type of the root-node.
The trees of $\J_{a}^{b\,c}$ having a root-node of type $\KK$ are counted by  $\Ind_{\KK \notin \{a,b,c\}}
\exp(\DSX+\DSC+z)$.
The ones having a root-node of type $\SSC$ are counted by $\Ind_A  \exp(\DSX+\DK+z)$.
Finally the ones having a root-node of type $\SSX$ are counted by $(\Ind_B+\Ind_C) \exp(\DSX+\DK+z)+ \Ind_{\SSC \notin \{a,b,c\}} (\DSC + \DK+z) \exp(\DK+\DSX+z)$ since the center of the star may be connected to the first marked leaf, to the second marked leaf or to neither of them. 

To conclude the proof, we use that $\DK=\DSC$, and  \cref{eq:defF,eq:KS_with_F}.
\end{proof}

For $0\leq i \leq 2k-2$ let $v_i$ (resp. $w_i$) be the vertex incident to $e_i$ in $t_0$ closest to (resp. farthest from) the root-leaf of $t_0$.  In particular, some $v_i$'s are equal to each other, $v_0$ is the root-leaf and some $w_i$ are leaves (see \cref{fig:PreuveDecompo_Dt0}, right).

If $w_i$ is not a leaf, let $s_i$ (resp. $g_i$) be the  smallest (resp. greatest) index of the edges from $w_i$ to its (two) children.

\begin{proposition}
\label{prop:decompo_Dt0}
We have 
\begin{equation}
\label{eq:decompo_Dt0}
\Dt(z,u_0,\dots,u_{2k-2})=  \sum_{\mathbf{(tp,ct)}\in \E}\prod_{i=0}^{2k-2} D_{tp_i}^{ct_i}(z,u_i) \prod_{i \atop
 w_i\text{ is an internal node}}  J_{ct_i}^{tp_{s_i} tp_{g_i}}(z)
\end{equation}
where
\begin{align*}\E=\{\mathbf{(tp,ct)}=(tp_i, ct_i)_{0\leq i \leq 2k-2} \mid tp_i,ct_i  \in\{\bullet, \KK, \SSX, \SSC \} &\text{ and } tp_i=\bullet \text{ iff } i=0 \\
  & \text{ and }  ct_i=\bullet \text{ iff }  w_i \text{ is a leaf }\}.
  \end{align*}
\end{proposition}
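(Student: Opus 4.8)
The idea is to decompose a tree $(T,\bm\ell)\in\mDt$ by cutting it along the essential vertices, producing one ``piece'' for each edge $e_i$ of $t_0$ and one ``joint'' at each internal essential vertex, and then to track carefully how the types and cotypes of the root-nodes of these pieces must match at each joint. Since the induced subtree is prescribed to be $t_0$ and condition ii) in \cref{def:Dt} forbids two essential vertices from being neighbours, each edge $e_i$ of $t_0$ corresponds to a genuine nonempty path of $T$ passing through at least one non-essential internal node, and the $\jump_{e_i}$-statistic counts jumps along exactly that path. The plan is: first, to every edge $e_i$ associate a DH-tree with a marked leaf in $\mD_{tp_i}^{ct_i}$, where the marked leaf of this piece is identified with the essential vertex $v_i$ (closer to the root) and the root-node carries the ``type'' $tp_i$ which is the type the essential vertex $w_i$ would carry; second, observe that at each internal essential vertex $w_i$ of $t_0$, the three incident edges $e_i$ (coming from above), $e_{s_i}$ and $e_{g_i}$ (going to the two children) meet at a node of $T$ — this node, together with the extra subtrees hanging off it that are not part of the induced subtree, is exactly an object of the class $\J$ defined just before, with the superscript types $tp_{s_i},tp_{g_i}$ recording the constraints coming from the two subtrees below and the subscript cotype $ct_i$ recording the constraint coming from above.

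Concretely I would argue as follows. Walk through $t_0$ and split $T$ at every essential vertex. For the edge $e_0$ adjacent to the root-leaf, the piece is a DH-tree with one marked leaf (the marked leaf being identified with the essential vertex $w_0$ which is the first common ancestor closest to the root); its root-node type is unconstrained, whence $tp_0=\bullet$, and the number of jumps on its spine (the root-node possibly being one of them) is recorded by $u_0$, so this piece contributes $D_\bullet^{ct_0}(z,u_0)$. For a generic edge $e_i$ the piece is a DH-tree with one marked leaf whose cotype is $ct_i$ (the cotype imposed by whatever sits at $v_i$) and whose root-node has type $tp_i$ (the type imposed by whatever sits at $w_i$); it contributes $D_{tp_i}^{ct_i}(z,u_i)$. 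When $w_i$ is a leaf of $t_0$, this leaf is one of the $\ell_j$'s and is a genuine leaf of $T$, hence carries no root-node type to impose downward: this is why $ct_i=\bullet$ precisely when $w_i$ is a leaf, exactly as in the definition of $\E$. When $w_i$ is internal, we have not yet accounted for the actual node of $T$ at which $e_i,e_{s_i},e_{g_i}$ meet, nor for the extra non-essential subtrees dangling from it; these are packaged by a tree in $\J_{ct_i}^{tp_{s_i}\,tp_{g_i}}$: the subscript $ct_i$ is the cotype this junction-object must be glueable under, the superscripts $tp_{s_i},tp_{g_i}$ are the types under which one must be able to glue the two child-pieces, and \cref{lem:J} gives its generating function. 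Since all these pieces and joints involve disjoint label sets and disjoint sets of non-marked leaves, the product rule for labeled classes turns this decomposition into the product formula \eqref{eq:decompo_Dt0}, and summing over all admissible assignments of types and cotypes gives the sum over $\E$. One must also check that the decomposition is a bijection: given the pieces and joints with matching types/cotypes, there is a unique way to reassemble them into a tree in $\mDt$, because the gluing of a DH-tree of a given type onto a marked leaf of a given cotype either is or is not legal according to conditions iii)–v), and the classes $\mD_a^b$ and $\J_a^{bc}$ have been set up precisely so that ``legal'' corresponds to ``type matches cotype''.

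The one point requiring genuine care — and the step I expect to be the main obstacle — is the bookkeeping of types and cotypes at the junctions, and in particular verifying that the object I have called ``the junction node of $T$ together with its extra subtrees'' is faithfully and exhaustively enumerated by $\J_{ct_i}^{tp_{s_i}\,tp_{g_i}}$ with no double-counting and no omission. This involves several things: (a) checking that the two marked leaves of the $\J$-object (to which the child pieces get glued) are indeed the two children of its root-node, matching \cref{lem:J}'s setup; (b) checking that the jumps potentially occurring at this junction node are \emph{not} double-counted — i.e.\ deciding once and for all whether a jump at $w_i$ is attributed to $e_i$, to $e_{s_i}$, to $e_{g_i}$, or to the $\J$-factor, and confirming the $D_{tp_i}^{ct_i}(z,u_i)$ convention (``essential vertices are not counted as jumps'' in the definition of $\jump_e$, but the root-node of each piece \emph{can} be) is consistent with this; and (c) confirming that condition ii) of \cref{def:Dt} is exactly what is needed to make every piece $\mD_{tp_i}^{ct_i}$ and every joint $\J$ nonempty/well-defined and to make the gluing points unambiguous. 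I would handle this by a careful local case analysis at a single internal essential vertex $w$ — exactly analogous to the nine-equation case analysis in the proof of \cref{prop:system_markedleaf} — examining the type of the node of $T$ at $w$ (one of $\KK,\SSC,\SSX$), which of its three incident essential edges go to the center versus extremities when it is a star, and reading off the induced constraints on $ct_i,tp_{s_i},tp_{g_i}$; this will reproduce precisely the indicator sets $A,B,C$ and the $\KK,\SSC$-exclusions appearing in \cref{lem:J}. Once this local picture is nailed down, assembling the global product over all $2k-1$ edges and all internal essential vertices, and summing over $\E$, is routine.
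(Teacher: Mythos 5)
Your proposal follows essentially the same route as the paper's proof: cut $(T,\bm\ell)$ at the internal essential vertices, recognize one piece in $\mD_{tp_i}^{ct_i}$ per edge $e_i$ of $t_0$ and one piece in $\J_{ct_i}^{tp_{s_i}\,tp_{g_i}}$ per internal essential vertex, verify the type/cotype matching and the jump accounting, and reassemble bijectively. One local slip to flag: you write that the marked leaf of the $e_i$-piece sits at $v_i$ and its root-node type $tp_i$ is carried at $w_i$, but the orientation is the opposite — the root-leaf of the piece arises from the cut near $\bar{v_i}$ (so $tp_i$ is the type of the child of $\bar{v_i}$ lying towards $\bar{w_i}$), while the marked leaf arises from the cut near $\bar{w_i}$ (so $ct_i$ is the cotype of $\bar{w_i}$). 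Your subsequent observations ($tp_0=\bullet$ for the root-edge piece, $ct_i=\bullet$ exactly when $w_i$ is a leaf of $t_0$) only make sense with the corrected orientation, and you do in fact use them consistently there, so this is a misstatement rather than a flaw in the argument.
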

\begin{proof}
   We shall build a size-preserving bijection from $\mDt$ to the disjoint union
   $$\biguplus_{\mathbf{(tp,ct)}\in \E}\prod_{i=0}^{2k-2} \mD_{tp_i}^{ct_i} \prod  \J_{ct_i}^{tp_{s_i} tp_{g_i}}$$

   Let $(T,\bm \ell) \in \mDt$. Then $T$ is a DH-tree. 
   Let $\bar{v_i}$ (resp. $\bar{w_i}$) be the essential vertex of $T$ corresponding to $v_i$ (resp. $w_i$). 
   We set $tp_0=\bullet$, and $ct_i=\bullet$ when $w_i$ is a leaf of $t_0$.
   Otherwise, we denote by $ct_i$ the cotype of $\bar{w_i}$,
   and by $tp_i$ the type of the child of $\bar{v_i}$ which is the root of the subtree containing $\bar{w_i}$ (this type is well-defined thanks to item ii) of \cref{def:Dt}).
   We then have $(tp_i, ct_i)_{0\leq i \leq 2k-2}\in \E$.
   For example with $(T,\bm \ell)$ given in \cref{fig:PreuveDecompo_Dt0},
   \[tp_0=\bullet, \ tp_1 = tp_2 =tp_4= \SSX, \ tp_3 = \KK, \ ct_0 = \SSX, \ ct_1=\SSC,
   \ ct_2=ct_3=ct_4=\bullet.\] 
  
  \begin{figure}[htbp]
  \includegraphics[width=10cm]{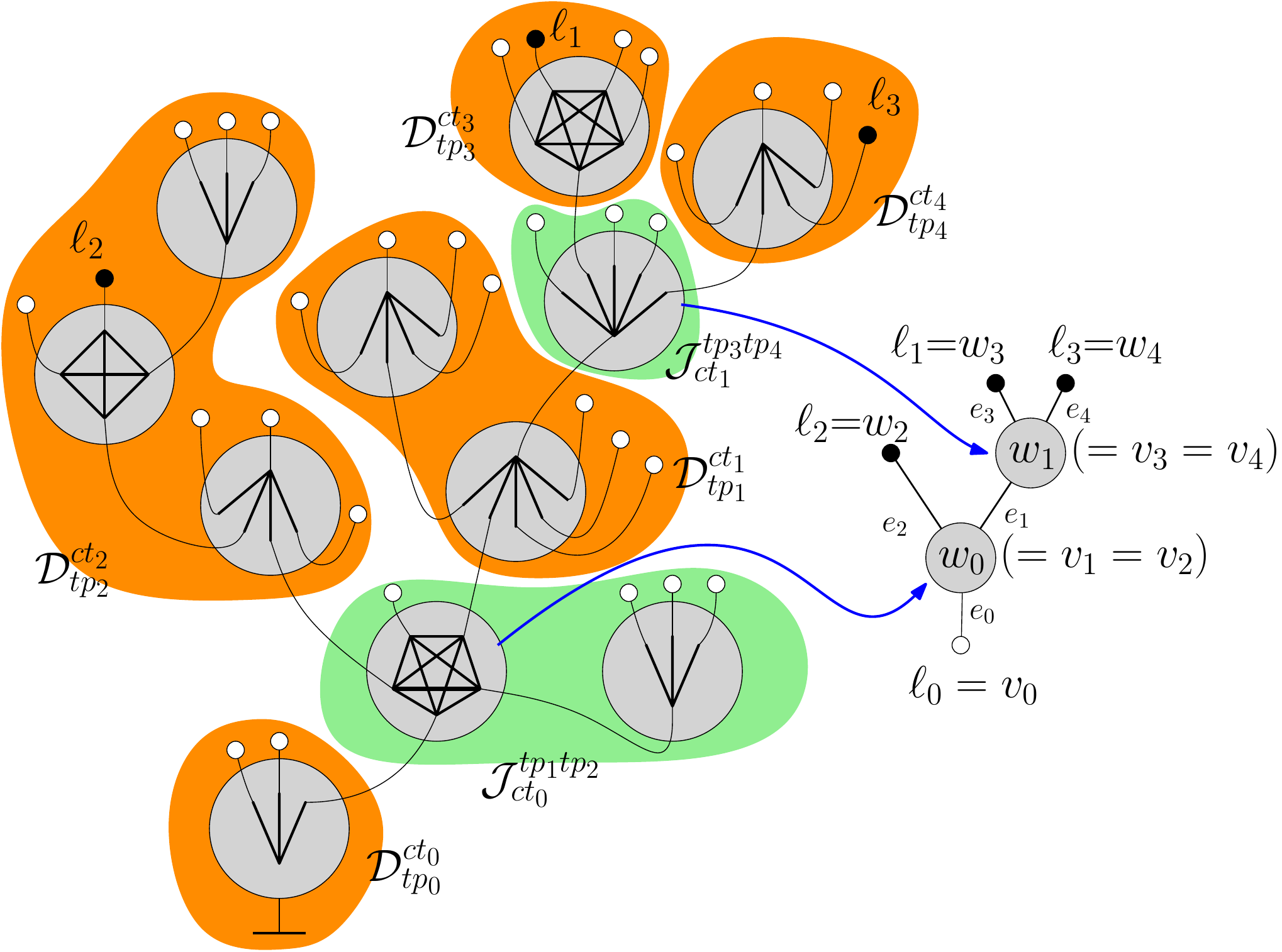}
  \caption{Decomposition of a DH-tree in $\mDt$. The groups of nodes indicated by orange and green areas correspond to the pieces defined in the proof of \cref{prop:decompo_Dt0}.}
  \label{fig:PreuveDecompo_Dt0}
  \end{figure}  

  We decompose $(T,\bm \ell)$ as follows. For each $i$ such that $w_i$ is an internal node of $t_0$, we cut the parent edge from $\bar{w_i}$,
  as well as the two edges incident to $\bar{w_i}$ which are the start of a path going to a marked leaf (since $t_0$ is a $k$-proper tree, there are always exactly two such edges). 
  This operation turns $(T,\bm \ell)$ into a disjoint union of trees, which we call pieces.
  Each edge that is cut is replaced by 
  a marked leaf (in the piece closer to the root of $T$) 
  and a root-leaf (in the piece further away from the root of $T$). 
 Then the piece containing $\bar{w_i}$ belongs to $\J_{ct_i}^{tp_{s_i} tp_{g_i}}$ (for every internal node $w_i$).
  Moreover, the pieces containing none of the $\bar{w_i}$ are in bijection with the edges of $t_0$, and the piece corresponding to $e_i$ belongs to $\mD_{tp_i}^{ct_i}$.

  By decomposing $(T,\bm \ell)$ we have indeed obtained an element of $\biguplus_{\mathbf{(tp,ct)}\in \E}\prod_{i=0}^{2k-2} \mD_{tp_i}^{ct_i} \prod  \J_{ct_i}^{tp_{s_i} tp_{g_i}}$.
  
  Conversely, let $(tp_i, ct_i)_{0\leq i \leq 2k-2}\in \E$ and take a tuple
  $$(T_{e_i})_{0\leq i\leq 2k-2}\times (T_{w_i})_{w_i\text{ internal node}} \in \prod_{i=0}^{2k-2} \mD_{tp_i}^{ct_i} \prod  \J_{ct_i}^{tp_{s_i} tp_{g_i}}.$$
  From these trees, we build a tree uniquely as follows.
  For every internal node $w_i$ of $t_0$, we glue the root-leaf of $T_{w_i}$ to the marked leaf of $T_{e_j}$, where $e_j$ is the edge from $w_i$ to its parent (when gluing, the two edges from the root-leaf and from the marked leaf become one edge, and the leaves disappear). Moreover, we glue the first marked leaf of $T_{w_i}$ to the root-leaf of $T_{e_{s_i}}$ and we glue the second marked leaf of $T_{w_i}$ to the root-leaf of $T_{e_{g_i}}$ (recall that $s_i$ (resp. $g_i$) is the smallest (resp. greatest) index of the edges from $w_i$ to its children).
  
  Since $t_0$ is a $k$-proper tree, once these gluings are done, we obtain only one tree $T$, 
  with one root-leaf (the one of $T_{e_0}$) and 
  $k$ marked leaves (those of $T_{e_i}$ where $e_i$ is incident to a leaf of $t_0$) which are in one-to-one correspondence with the leaves of $t_0$. 
  By construction (recalling also the definition of $\J_{a}^{b\,c}$), $T$ is a DH-tree, whose $k$ marked leaves induce $t_0$, and which satisfies item ii) of \cref{def:Dt} (since elements of $\mDab$ are DH-trees thus have one or more internal node(s)).
  All together, we have $T \in \mDt$.
  
  Finally, we have a size-preserving bijection, since the size of $T$ is the sum of the sizes of the $T_{e_i}$ and of the $T_{w_i}$. Indeed, for $\mDt$, $\mDab$ and $\J_{a}^{b\,c}$, the root-leaf and marked leaves are not counted in the size, and the leaves which have disappeared when gluing are all marked leaves or root-leaves.  
\end{proof}

\subsection{Asymptotic analysis}
Recall the notation $r(T,\bm \ell)$ from the end of \cref{ssec:51}, denoting the enriched induced subtree of $(T,\bm \ell)$.
\begin{proposition}
\label{prop:theoreme_local_limite_rTn}
Let $(\bm T_n, \bm \ell)$ be a uniform random DH-tree of size $n$ with $k$ marked leaves (not counted in the size).
Fix a $k$-proper tree $t_0$ and real numbers $x_0$, \dots, $x_{2k-2}>0$.
We set $a_i=\lfloor x_i \sqrt n \rfloor$. Then
\begin{equation} \label{eq:ConvLocale}
\mathbb P\big[ r(\bm T_n, \bm \ell) =(t_0,a_0,\dots,a_{2k-2}) \big] 
\sim \frac{\gamma_H^{2k}}{2^{k} \sqrt{n}^{2k-1}} s \exp\left(\frac{-\gamma^2_H s^2}{4}\right),
 \end{equation}
where $s=\sum_i x_i$.
Moreover, this estimate is uniform for $(x_0, \dots, x_{2k-2})$ in any compact subset
of $(0;+\infty)^{2k-1}$.
\end{proposition}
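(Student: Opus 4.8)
The plan is to extract the asymptotics of the probability in \eqref{eq:ConvLocale} from the combinatorial decomposition \eqref{eq:decompo_Dt0} of $\Dt$ via a multivariate singularity analysis. First I would write
\[
\mathbb P\big[ r(\bm T_n, \bm \ell) =(t_0,a_0,\dots,a_{2k-2}) \big]
= \frac{|T|! \, [z^n u_0^{a_0}\cdots u_{2k-2}^{a_{2k-2}}] \, \Dt(z,u_0,\dots,u_{2k-2})}{n! \, [z^n]\, D^{(k)}(z)}\cdot\big(1+o(1)\big),
\]
where a small argument is needed to justify that the technical condition ii) in \cref{def:Dt} (no two essential vertices are neighbors) costs nothing asymptotically: trees violating it are a vanishing fraction, since a uniform large DH-tree has long paths between its branching points. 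The denominator $[z^n] D^{(k)}(z)$ is controlled by singular differentiation of \eqref{eq:DevD}: since $D$ has a square-root singularity at $\rho$, $D^{(k)}$ has a singularity of type $(1-z/\rho)^{1/2-k}$ with explicit constant, and the transfer theorem gives $[z^n] D^{(k)}(z) \sim C_k \rho^{-n} n^{k-3/2}$ for an explicit $C_k$ depending on $\gamma_D$ and $\rho$.

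For the numerator, the key observation is the structure \eqref{eq:formeDab}: each factor $D_{tp_i}^{ct_i}(z,u_i) = Q(z) + M(z)/(1-u_i H(z))$ with the \emph{same} $H$ for all $(a,b)$, and $M_a^b = \Lambda_a\Lambda_b$. Extracting the coefficient of $u_i^{a_i}$ with $a_i = \lfloor x_i\sqrt n\rfloor \to \infty$ kills the $Q$-term and turns the $i$-th factor into $\Lambda_{tp_i}(z)\Lambda_{ct_i}(z)\, H(z)^{a_i}$. Since the $J$-factors in \eqref{eq:decompo_Dt0} do not depend on the $u_i$, after coefficient extraction in all $2k-1$ variables the summand becomes
\[
\Big(\prod_{i=0}^{2k-2}\Lambda_{tp_i}(z)\Lambda_{ct_i}(z)\Big)\Big(\prod_{w_i \text{ internal}} J_{ct_i}^{tp_{s_i}tp_{g_i}}(z)\Big)\, H(z)^{a_0+\cdots+a_{2k-2}}.
\]
Writing $H(z)^{\sum a_i}$ and using $\sum a_i \sim s\sqrt n$ together with the expansion $H(z) = 1 - \gamma_H\sqrt{1-z/\rho} + O(1-z/\rho)$ from \eqref{Hrho=1}--\eqref{eq:gamma_H}, I am exactly in the regime of the Semi-large powers Theorem (\cref{Sec:AppendixSemiLarge}): a function $A(z)H(z)^{N}$ with $N = \lfloor s\sqrt n\rfloor$, where $A(z)$ is $\Delta$-analytic with a (generically nonzero) finite limit $A(\rho)$ at $\rho$ and $H$ has $H(\rho)=1$ with a square-root singularity. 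That theorem yields $[z^n]A(z)H(z)^N \sim \rho^{-n} A(\rho)\,\frac{\gamma_H N}{2\sqrt\pi\, n^{3/2}}\exp(-\gamma_H^2 N^2/(4n))$, so each summand contributes $\rho^{-n}$ times an explicit Gaussian-type factor in $s$, times the value at $\rho$ of the product of $\Lambda$'s and $J$'s.

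The remaining work is bookkeeping: sum over $\mathbf{(tp,ct)}\in\E$. Using $\Lambda_\bullet = \Lambda_\KK+\Lambda_\SSC+\Lambda_\SSX$ and the fact that $J_{ct_i}^{tp_{s_i}tp_{g_i}}(\rho)$ equals $2(1+F(\rho))$ or $1+F(\rho)$ or involves $F(\rho)$ according to the indicator pattern, the sum over local type choices at each internal node $w_i$ factorizes; one checks (this is the computation I would relegate to the Maple worksheet) that the total combinatorial weight is independent of the shape of $t_0$ beyond its number of edges and internal nodes, and collapses to the clean constant $\gamma_H^{2k}/2^k$ after dividing by the denominator constant $C_k$. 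Assembling, the ratio of numerator to denominator is $\dfrac{\gamma_H^{2k}}{2^k\sqrt n^{\,2k-1}}\, s\exp(-\gamma_H^2 s^2/4)(1+o(1))$, as claimed; the factor $\sqrt n^{\,2k-1} = n^{k-1/2}$ arises as $n^{k-3/2}$ from each numerator summand against $n^{k-3/2}$ in the denominator, shifted by the extra $N/n^{3/2} \sim s/n$ factor from the semi-large powers estimate applied $2k-1$ times — more precisely the product $\prod_i (\gamma_H a_i / (2\sqrt\pi n^{3/2}))$ combined with $\sum a_i \sim s\sqrt n$ and the single surviving Gaussian exponent. Uniformity over compact subsets of $(0,\infty)^{2k-1}$ follows from the uniform version of the Semi-large powers Theorem proved in \cref{Sec:AppendixSemiLarge}.

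The main obstacle is handling the $2k-1$ \emph{simultaneous} semi-large powers correctly: naively $\Dt$ is not a single power $A(z)H(z)^N$ but a product $\prod_i H(z)^{a_i}$ with a common $z$ — so I would first combine these into $H(z)^{\sum a_i}$, note that $\sum a_i = \lfloor x_0\sqrt n\rfloor + \cdots + \lfloor x_{2k-2}\sqrt n\rfloor = s\sqrt n + O(1)$, and then apply the one-variable theorem once, with $A(z) = $ the (singular but $\Delta$-analytic, nonzero-at-$\rho$) product of the $\Lambda$'s and $J$'s summed over $\E$. Care is needed because $A(z)$ itself has a square-root singularity (inherited from $F$ through the $\Lambda$'s), so it is the generalized version of the Semi-large powers Theorem — allowing $A$ to be singular, not merely analytic, at $\rho$ — that is actually invoked; this is precisely why the paper proves such a generalization in \cref{Sec:AppendixSemiLarge}. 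Verifying that the leading constant of $A$ at $\rho$ is genuinely $A(\rho) \neq 0$ (and not, say, forced to vanish by cancellations among the $\Lambda$'s, which would change the exponent of $n$) is the delicate point, and is what the explicit computation $\gamma_H^{2k}/2^k$ certifies.
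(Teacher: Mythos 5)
Your proposal matches the paper's proof essentially step-by-step: the combinatorial decomposition of $\mDt$, the observation that the common $H$ lets you combine the $u_i$-extractions into a single power $H(z)^{\sum a_i}$ with $\sum a_i \sim s\sqrt n$, the application of the generalized Semi-large powers Theorem (with $M$ merely convergent at $\rho$, allowed a square-root singularity there), and the Maple-assisted bookkeeping of the sum over $\E$. The one spot you overcomplicate is condition ii) of \cref{def:Dt}: since every $a_i = \lfloor x_i\sqrt n\rfloor$ is strictly positive once $n$ is large, the event $r(\bm T_n, \bm \ell) = (t_0, a_0, \dots, a_{2k-2})$ already \emph{implies} membership in $\mDt$ (a positive jump count on each path between essential vertices forces them to be non-adjacent), so the paper's identity for $p(n)$ is an exact equality rather than one up to a $(1+o(1))$ factor --- no vanishing-fraction argument is needed, and the $|T|!/n!$ factor in your display is spurious since $|T|=n$.
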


\begin{proof}
We first note that, for $n$ large enough,
\[ r(\bm T_n, \bm \ell) =(t_0,a_0,\dots,a_{2k-2}) \]
implies that $(\bm T_n, \bm \ell)$ is in $\Dt$.
 Indeed, item i) of \cref{def:Dt} comes from the definition of $t_0$; item ii) follows from the fact that  for every $i>0$,
we have $a_i>0$ (for $n$ large enough): so, there must be some jumps between each pair of essential vertices, and thus
they cannot be neighbors.

Therefore, writing $p(n):=\mathbb P\big[ r(\bm T_n, \bm \ell) =(t_0,a_0,\dots,a_{2k-2}) \big]$,
we have, for $n$ large enough, 
\begin{equation}
\label{eq:p}
p(n)= \frac{[z^n u_0^{a_0} \dots u_{2k-2}^{a_{2k-2}}] \Dt(z,u_0,\dots,u_{2k-2})} { [z^n] D^{(k)}(z) }.\end{equation}
We first analyze the denominator.
From \cref{eq:DevD} and singular differentiation,
we have
\[D^{(k)}(z)= \frac{1\cdot 3 \cdots (2k-3)}{2^k}  \gamma_D \rho^{-k} (1-z/\rho)^{1/2-k} + \mathcal O\big((1-z/\rho)^{-k} \big).\]
Applying the transfer theorem then yields
\begin{align}
 [z^n] D^{(k)}(z)
&\sim  \frac{1\cdot 3 \cdots (2 k-3)}{2^k}  \gamma_D \rho^{-k-n} \frac{n^{k-3/2}}{\Gamma(k-1/2)}\nonumber\\
&\sim  \frac{\gamma_D}{2 \sqrt \pi} \rho^{-k-n} n^{k-3/2}. \label{eq:coef_Dk}
\end{align}
Consider now the numerator of \cref{eq:p}.
We start from \cref{eq:decompo_Dt0} and use that from \cref{eq:formeDab} all $D_a^b$ are of the form 
\[\Dab=Q_a^b(z)+\frac{M_a^b(z)}{1-u\, H(z)}.\]
When expanding the product $\prod_{i=0}^{2k-2} D_{tp_i}^{ct_i}(z,u_i)$ in \cref{eq:p}, we can forget the terms
$Q_{tp_i}^{ct_i}(z)$ without changing the coefficient of 
$z^n u_0^{a_0} \dots u_{2k-2}^{a_{2k-2}}$
(indeed, since $x_i>0$, we have $a_i>0$ and $D_{tp_i}^{ct_i}(z,u_i)$ is the only factor containing $u_i$).
Also, clearly, $[u_i^{a_i}]\frac{1}{1-u_i\, H(z)}=H(z)^{a_i}$.
We therefore get
\begin{equation}
\label{eq:coeff_Dt}
[z^n u_0^{a_0} \dots u_{2k-2}^{a_{2k-2}}] \Dt(z,u_0,\dots,u_{2k-2})
= 
 \sum_{\mathbf{(tp,ct)}\in \E}
 [z^n] \, \widetilde M_\mathbf{(tp,ct)}(z) H(z)^{a_0+\dots+a_{2k-2}},
 \end{equation}
 where 
 \begin{equation}
 \label{eq:Mtilde}
 \widetilde M_\mathbf{(tp,ct)}(z)=\prod_{i=0}^{2k-2} M_{tp_i}^{ct_i}(z) \prod_{i \atop
 w_i\text{ is an internal node}}  J_{ct_i}^{tp_{s_i} tp_{g_i}}(z).
 \end{equation}
We apply\footnote{Of course, when $(x_0, \dots, x_{2k-2})$ spans a compact subset of $(0,+\infty)^{2k-1}$, then $s$ spans a compact subset of $(0,+\infty)$.} the Semi-large powers Theorem (see \cref{Th:SemiLarge} p.\pageref{Th:SemiLarge}) with $s=\sum_{i=0}^{2k-2} x_i$.
Using that $H(\rho)=1$ (see \cref{Hrho=1}),
we have 
\[[z^n] \, \widetilde M_\mathbf{(tp,ct)}(z) H(z)^{a_0+\dots+a_{2k-2}}\sim
\frac{s\gamma_H}2 \exp\left(\frac{-s^2\, \gamma_H^2}4 \right) \frac{1}{n\rho^{n}\sqrt{\pi}}
 \widetilde M_\mathbf{(tp,ct)}(\rho) ,
 \]
 where we recall that $\gamma_H$ is given by \cref{eq:gamma_H}.
Therefore we have
{\small  \begin{equation}
 \label{eq:coef_Dt_final}
 [z^n u_0^{a_0} \dots u_{2k-2}^{a_{2k-2}}] \Dt(z,u_0,\dots,u_{2k-2})
\sim \frac{s\gamma_H}2 \exp\left(\frac{-s^2\, \gamma_H^2}4 \right) \frac{1}{n\rho^{n}\sqrt{\pi}}
   \sum_{\mathbf{(tp,ct)}\in \E} \widetilde M_\mathbf{(tp,ct)}(\rho).
 \end{equation}}
To make notation lighter, we set $\kappa:=\sum_{\mathbf{(tp,ct)}\in \E} \widetilde M_\mathbf{(tp,ct)}(\rho)$,
which we will evaluate later. From \cref{eq:p,eq:coef_Dk,eq:coef_Dt_final},
we have
\[p(n) \sim
\frac{\frac{s\gamma_H}2 \exp\left(\frac{-s^2\, \gamma_H^2}4 \right) \frac{1}{n\rho^{n}\sqrt{\pi}}\kappa}
{ \frac{\gamma_D}{2 \sqrt \pi} \rho^{-k-n} n^{k-3/2}}=n^{-k+1/2} \rho^{k} \frac{ \gamma_H \kappa}{\gamma_D} \, s\, \exp\left(\frac{-s^2\, \gamma_H^2}4 \right).  \]
To conclude the proof of \cref{prop:theoreme_local_limite_rTn}, it remains to check that
\begin{equation}
\label{eq:relation_constant}
 \rho^{k} \frac{ \gamma_H \kappa}{\gamma_D} =\frac{\gamma_H^{2k}}{2^{k} }.
\end{equation}
To this end we simplify the quantity $\kappa=\sum_{\mathbf{(tp,ct)}\in \E} \widetilde M_\mathbf{(tp,ct)}(\rho)$. Since $M_a^b=\Lambda_a \Lambda_b$, we have
\[\widetilde M_\mathbf{(tp,ct)}(\rho)
= \prod_{i=0}^{2k-2} \Lambda_{tp_i}(\rho) \, \Lambda_{ct_i}(\rho) \ \prod_{i \atop
 w_i\text{ is an internal node}}  J_{ct_i}^{tp_{s_i}\, tp_{g_i}}(\rho).
\]
The first product runs over edges of $t_0$. We can rearrange its terms according to vertices.
Namely, we get a term $\Lambda_\bullet(\rho)$ for the root-leaf of $t_0$
and one for each leaf of $t_0$ (the type of the root-leaf and the cotypes
of the leaves are $\bullet$; see the definition of $\E$ in \cref{prop:decompo_Dt0}).
Additionally, for each $i$ such that $w_i$ is an internal vertex, we get a factor 
$\Lambda_{ct_i}(\rho)$ from the parent edge $e_i$ of $w_i$,
and two factors $\Lambda_{tp_{s_i}}(\rho)$ and $\Lambda_{tp_{g_i}}(\rho)$ from the children edges $e_{s_i}$ and $e_{g_i}$ of $w_i$.
The above display therefore rewrites as
\[\widetilde M_\mathbf{(tp,ct)}(\rho) 
=\Lambda_\bullet(\rho)^{k+1} \prod_{i \atop
 w_i\text{ is an internal node}}  \Lambda_{ct_i}(\rho) \Lambda_{tp_{s_i}}(\rho)
 \Lambda_{tp_{g_i}}(\rho)
 J_{ct_i}^{tp_{s_i}\, tp_{g_i}}(\rho).\]
 We now want to sum this quantity over $\mathbf{(tp,ct)}$ in $\E$.
 Note that choosing an element of $\E$ consists in choosing $ct_i$, $tp_{s_i}$
 and $tp_{g_i}$ for each internal vertex $w_i$.
The sum $\kappa=\sum_{\mathbf{(tp,ct)}\in \E} \widetilde M_\mathbf{(tp,ct)}(\rho)$
therefore factorizes over internal vertices of $t_0$ (there are $k-1$ of them)
 and we get
\[\kappa= \Lambda_\bullet(\rho)^{k+1} \left( 
\sum_{ct,tp_s,tp_g \in \{\KK,\SSX,\SSC\}^3}\Lambda_{ct}(\rho) 
\Lambda_{tp_{s}}(\rho) \Lambda_{tp_{g}}(\rho)
 J_{ct}^{tp_{s}\, tp_{g}}(\rho)\right)^{k-1}.   \]
We can write $\kappa=\mu \nu^k$, with 
\begin{align}
\mu&= \Lambda_\bullet(\rho) \left( 
\sum_{ct,tp_s,tp_g \in \{\KK,\SSX,\SSC\}^3}\Lambda_{ct}(\rho) 
\Lambda_{tp_{s}}(\rho) \Lambda_{tp_{g}}(\rho)
 J_{ct}^{tp_{s}\, tp_{g}}(\rho)\right)^{-1}; \label{eq:mu}\\
\nu&= \Lambda_\bullet(\rho) \left( 
\sum_{ct,tp_s,tp_g \in \{\KK,\SSX,\SSC\}^3}\Lambda_{ct}(\rho) 
\Lambda_{tp_{s}}(\rho) \Lambda_{tp_{g}}(\rho)
 J_{ct}^{tp_{s}\, tp_{g}}(\rho)\right).\label{eq:nu}
\end{align}
Then \cref{eq:relation_constant} holds for any $k \ge 1$ if 
\[\gamma_H^2=2 \rho\, \nu \quad \text{ and } \quad \gamma_H\mu=\gamma_D,\]
which we verify using Maple, from the definitions of the $\Lambda_\alpha$ and \cref{lem:J} for the $J_a^{b,c}$ (observing that $J_a^{b,c}=J_a^{c,b}=J_b^{a,c}$ for all $a,b,c$).
\end{proof}

\cref{prop:theoreme_local_limite_rTn} is a kind of local limit theorem for $r(\bm T_n, \bm \ell)$.
It is rather standard that such statements imply convergence in distribution statements.
We now state the convergence in  distribution of $r(\bm T_n, \bm \ell)$ (after normalization),
which we prove for completeness.

\begin{corollary}\label{Coro:ConvergenceEnriched}
Recall that $(\bm T_n, \bm \ell)$ denotes a uniform random DH-tree of size $n$ with $k$ marked leaves (not counted in the size). We set
$$
(\bm t_0^n,A_0,\dots,A_{2k-2})=r(\bm T_n, \bm \ell).
$$
Then
$$
\left(\bm t_0^n,\frac{A_0}{\sqrt{n}},\dots,\frac{A_{2k-2}}{\sqrt{n}}\right) \stackrel{(d)}{\to}
\left( \bm t_0,X_0,\dots X_{2k-2}\right)
$$
where 
\begin{itemize}
\item  $\bm{t}_0$ is a uniform $k$-proper tree;
\item $(X_0,\dots,X_{2k-2})$ have joint density 
\begin{equation}\label{eq:density}
(2k-3)!!\, \frac{\gamma_H^{2k}}{2^k} s\exp(-\gamma_H^2 s^2/4)\, dx_0\dots dx_{2k-2},\qquad \text{with }s:= x_0 +\dots +x_{2k-2}
\end{equation}
and are independent from $\bm{t}_0$.
\end{itemize}
\end{corollary}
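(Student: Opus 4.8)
The plan is to deduce the corollary from the local limit estimate \cref{prop:theoreme_local_limite_rTn} in the standard way one passes from a local limit theorem to a convergence in distribution. Since the first coordinate $\bm t_0^n$ takes values in the finite set of $k$-proper trees, it suffices to show that for every fixed $k$-proper tree $t_0$ and every box $B=\prod_{i=0}^{2k-2}[\alpha_i,\beta_i]$ with $0<\alpha_i<\beta_i<\infty$, we have
\[
\PP\Big[\bm t_0^n=t_0 \ \text{ and }\ A_i/\sqrt n\in[\alpha_i,\beta_i]\ \text{ for all }i\Big]\ \xrightarrow[n\to\infty]{}\ \int_{B}\frac{\gamma_H^{2k}}{2^k}\,s\,e^{-\gamma_H^2 s^2/4}\,dx_0\cdots dx_{2k-2},
\]
with $s=x_0+\cdots+x_{2k-2}$. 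Indeed, by the independence of $\bm t_0$ and $(X_0,\dots,X_{2k-2})$ together with \eqref{eq:density}, the right-hand side equals $\PP[\bm t_0=t_0,\ (X_i)_i\in B]$, and the sets $\{t_0\}\times B$ of this form will be convergence-determining once we know (see below) that the target is a probability law.

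For the displayed limit, the key point is a Riemann-sum argument. First I would write its left-hand side as the finite sum $\sum \PP[r(\bm T_n,\bm\ell)=(t_0,a_0,\dots,a_{2k-2})]$ over integer tuples $(a_i)$ with $\alpha_i\sqrt n\le a_i\le\beta_i\sqrt n$; this is immediate from the definition of $r$. Writing $a_i=\lfloor x_i\sqrt n\rfloor$, \cref{prop:theoreme_local_limite_rTn} gives, \emph{uniformly} for $(x_0,\dots,x_{2k-2})$ in the compact box $B$,
\[
\PP\big[r(\bm T_n,\bm\ell)=(t_0,a_0,\dots,a_{2k-2})\big]=\big(1+o(1)\big)\,\frac{\gamma_H^{2k}}{2^k}\,\Big(\frac1{\sqrt n}\Big)^{2k-1}\,g\Big(\frac{a_0}{\sqrt n},\dots,\frac{a_{2k-2}}{\sqrt n}\Big),
\]
where $g(x_0,\dots,x_{2k-2})=s\,e^{-\gamma_H^2 s^2/4}$. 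As there are $\sim n^{(2k-1)/2}\prod_i(\beta_i-\alpha_i)$ tuples in the sum and the error is uniform, this sum is, up to a factor $1+o(1)$, the Riemann sum of mesh $1/\sqrt n$ of the continuous bounded function $\gamma_H^{2k}2^{-k}g$ on $B$, hence converges to $\int_B\gamma_H^{2k}2^{-k}g$, which is the announced limit.

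It remains to argue that these box-limits determine the limiting distribution, i.e.\ that no mass escapes. For this I would verify that the density \eqref{eq:density} integrates to $1$ over $(0,\infty)^{2k-1}$: using the simplex volume $\int_{\{x_i>0,\ \sum_i x_i=s\}}dx=s^{2k-2}/(2k-2)!$ and the Gaussian moment $\int_0^\infty s^{2k-1}e^{-\gamma_H^2 s^2/4}\,ds=\tfrac12(k-1)!\,(4/\gamma_H^2)^k$, one finds $\int_{(0,\infty)^{2k-1}}(2k-3)!!\,\gamma_H^{2k}2^{-k}\,s\,e^{-\gamma_H^2 s^2/4}\,dx=(2k-3)!!\cdot\frac{2^{k-1}(k-1)!}{(2k-2)!}=1$, the last equality being $(2k-2)!=2^{k-1}(k-1)!\,(2k-3)!!$. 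Hence the family $\big(\bm t_0^n,(A_i/\sqrt n)_i\big)_n$ is tight (the boxes $B$ exhaust $(0,\infty)^{2k-1}$, which carries all the limiting mass), and since every subsequential limit coincides with $\big(\bm t_0,(X_i)_i\big)$ on all sets $\{t_0\}\times B$, the convergence in distribution follows. The only genuinely delicate points are the uniformity invoked in the Riemann-sum step — which is precisely what the last sentence of \cref{prop:theoreme_local_limite_rTn} provides — and this normalisation check ensuring no loss of mass; everything substantial has already been done in \cref{prop:theoreme_local_limite_rTn}.
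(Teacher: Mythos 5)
Your proof is correct and rests on the same two pillars as the paper's: the Riemann-sum passage from the local estimate of \cref{prop:theoreme_local_limite_rTn} to the box probabilities, and the normalisation integral $\int_{(0,\infty)^{2k-1}}(2k-3)!!\,\frac{\gamma_H^{2k}}{2^k}\,s\,e^{-\gamma_H^2s^2/4}\,dx=1$ (via the change of variables $s_j=x_0+\dots+x_j$ and the Gaussian moment). The only difference is in how the argument is closed: you invoke the standard ``convergence on boxes bounded away from $0$, plus the limit being a probability, implies tightness and hence weak convergence,'' whereas the paper avoids this bit of weak-convergence bookkeeping by a pigeonhole trick — it first obtains $\liminf_n\mathbb P\big((\bm T_n,\bm\ell)\text{ induces }t_0\big)\ge 1/(2k-3)!!$ for \emph{each} of the $(2k-3)!!$ proper trees $t_0$ and notes these liminfs must all be limits since the probabilities sum to $1$, then deduces the conditional convergence given $t_0$. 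Both endgames are sound and of comparable length; yours is the more routine LLT-to-CLT route, the paper's is slightly slicker and avoids arguing about convergence-determining classes.
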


\begin{proof}
Fix a tree $t_0$ and positive real numbers 
$b_0<c_0$, $b_1<c_1$, \dots, $b_{2k-2}<c_{2k-2}$. We consider the probability
\begin{multline}\label{eq:ConvergenceLoi}
\mathbb{P}\left(\left(\bm t_0^n,A_0,\dots,A_{2k-2} \right)\in \{t_0\}\times[b_0\sqrt{n},c_0\sqrt{n}]\times \dots \times[b_{2k-2}\sqrt{n},c_{2k-2}\sqrt{n}] \right)\\
= \sum_{a_0=\lceil b_0\sqrt{n}\rceil}^{\lfloor c_0\sqrt{n} \rfloor} \cdots
\sum_{a_{2k-2}=\lceil b_{2k-2}\sqrt{n}\rceil}^{\lfloor c_{2k-2}\sqrt{n} \rfloor}
\mathbb P\big[ r(\bm T_n, \bm \ell) =(t_0,a_0,\dots,a_{2k-2}) \big] 
\end{multline}
Summands on the right-hand side are asymptotically given by \cref{eq:ConvLocale}.
Since this formula is uniform in every compact subset of $(0,+\infty)^{2k-2}$, 
we can substitute each summand by its equivalent and get
\begin{multline}
\mathbb{P}\left(\left(\bm t_0^n,A_0,\dots,A_{2k-2} \right)\in \{t_0\}\times[b_0\sqrt{n},c_0\sqrt{n}]\times \dots \times[b_{2k-2}\sqrt{n},c_{2k-2}\sqrt{n}] \right) \\
\sim  \sum_{a_0=\lceil b_0\sqrt{n}\rceil}^{\lfloor c_0\sqrt{n} \rfloor} \cdots
\sum_{a_{2k-2}=\lceil b_{2k-2}\sqrt{n}\rceil}^{\lfloor c_{2k-2}\sqrt{n} \rfloor}
\frac{1}{\sqrt{n}^{2k-1}} \frac{\gamma_H^{2k}}{2^{k}} s \exp\left(\frac{-\gamma^2_H s^2}4 \right),
\label{eq:P_reducedtree_in_interval}
 \end{multline}
 with $s=\sum_{i=0}^{2k-2} x_i$.
This sum is well approximated by the corresponding Riemann sum and  converges to
\begin{equation}
\label{eq:RiemannSum}
\frac{\gamma_H^{2k}}{2^{k}} \int_{[b_0,c_0]\times \dots \times[b_{2k-2},c_{2k-2}]} s\exp\left(\frac{-\gamma^2_H s^2}4\right)dx_0 \dots dx_{2k-2}.
\end{equation}

Therefore we have, for any $k$-proper tree $t_0$,
\[\liminf_{n \to +\infty} \mathbb{P}\left( (\bm T_n, \bm \ell) \text{ induces }t_0 \right)
\ge \frac{\gamma_H^{2k}}{2^{k}} \int_{[b_0,c_0]\times \dots \times[b_{2k-2},c_{2k-2}]} 
s \exp\left(\frac{-\gamma^2_H s^2}4 \right) dx_0 \dots dx_{2k-2}.\]
Since this holds for any 
$b_0<c_0$, $b_1<c_1$, \dots, $b_{2k-2}<c_{2k-2}$,
we have
\[\liminf_{n \to +\infty} \mathbb{P}\left( (\bm T_n, \bm \ell) \text{ induces }t_0 \right)
\ge \frac{\gamma_H^{2k}}{2^{k}} \int_{\mathbb R_+^{2k-1}} s \exp\left(\frac{-\gamma^2_H s^2}4 \right)dx_0 \dots dx_{2k-2}.\]
Call $I$ the right-hand-side.
Performing the change of variables $s_0:=x_0$, $s_1:=x_0+x_1$, \dots, $s_{2k-2}:=x_0+\dots+x_{2k-2}=s$
we get (note that the Jacobian matrix of this change of variable has determinant 1):
\[ I= \frac{\gamma_H^{2k}}{2^{k}} \int_{\mathbb R_+} s \exp\left(\frac{-\gamma^2_H s^2}4 \right) \left( \int_{s_0 \le \dots \le s_{2k-1} \le s} ds_0 \dots ds_{2k-1} \right) ds.\]
The inner integral is equal to $\frac{s^{2k-2}}{(2k-2)!}$.
Thus we get
\begin{multline*}
I=  \frac{\gamma_H^{2k}}{2^{k} (2k-2)!} \int_{\mathbb R_+} s^{2k-1} \exp\left(\frac{-\gamma^2_H s^2}4 \right) ds 
\\ = 
 \frac{1}{(2k-2)!} \int_{\mathbb R_+}  y^{2k-1} \exp(-y^2/2) dy = \frac{2^{k-1}(k-1)!}{(2k-2)!}=\frac{1}{(2k-3)!!},
 \end{multline*}
where the second inequality is obtained by setting $y=\gamma_H s/\sqrt{2}$ and the third
by repeating integration by part.
Summing up, for any $k$-proper tree $t_0$, we have
\[\liminf_{n \to +\infty} \mathbb{P}\left( (\bm T_n, \bm \ell) \text{ induces }t_0 \right)
\ge \frac{1}{(2k-3)!!}.\]
Since there are $(2k-3)!!$ $k$-proper trees $t_0$,
 the infimum limit needs to be an actual limit and the inequality is an equality.
Therefore, we have proved that $\bm t_0^n$ converges in distribution to a uniform $k$-proper tree. 

Now, \cref{eq:P_reducedtree_in_interval,eq:RiemannSum} 
imply that
\begin{multline*}
 \mathbb{P}\left[\left(A_0,\dots,A_{2k-2} \right)\in [b_0\sqrt{n},c_0\sqrt{n}]\times \dots \times[b_{2k-2}\sqrt{n},c_{2k-2}\sqrt{n}] \ \big| \ (\bm T_n, \bm \ell) \text{ induces }t_0  \right]\\
=\frac{\mathbb{P}\left[\left(\bm t_0^n,A_0,\dots,A_{2k-2} \right)\in \{t_0\}\times[b_0\sqrt{n},c_0\sqrt{n}]\times \dots \times[b_{2k-2}\sqrt{n},c_{2k-2}\sqrt{n}] \right]}{\mathbb{P}\left( (\bm T_n, \bm \ell) \text{ induces }t_0 \right)}\\
\sim (2k-3)!! \frac{\gamma_H^{2k}}{2^{k}} \int_{[b_0,c_0]\times \dots \times[b_{2k-2},c_{2k-2}]} s\exp\left(\frac{-\gamma^2_H s^2}4\right)dx_0 \dots dx_{2k-2}.
\end{multline*}
Consequently, for any $t_0$, conditioning on \enquote{$(\bm T_n, \bm \ell) \text{ induces }t_0$}
 the vector $$\left(\frac{A_0}{\sqrt{n}},\dots,\frac{A_{2k-2}}{\sqrt{n}}\right)$$ converges in distribution
to a vector $(X_0,\dots, X_{2k-2})$ with density given by \eqref{eq:density}, whose expression does not depend on $t_0$.
This ends the proof of the corollary.
\end{proof}

\subsection{Gromov--Prohorov convergence of DH graphs}

Let $\Gn$ be the uniform DH graph of size $n$.
We want to deduce from \cref{Coro:ConvergenceEnriched} the convergence in distribution of the marginals of the distance matrix of $\Gn$.
To do this recall that \cref{lem:dG} allows us to estimate distances in $\Gn$ in terms of jumps in the associated DH-tree.

We first reformulate \cref{lem:dG} with the vocabulary of induced subtrees.
For $k\geq 2$ let $G$ be a  DH graph  of size $n+k+1$ (whose vertex set is therefore  $\{1, \ldots, n+k+1\}$).  Let $v_0$ be the vertex of $G$ with label $n+k+1$ and, for $1\leq i\leq k$, let  $v_i$ be the vertex of $G$ with label $n+i$.
Denote by  $(T, \bm \ell)$ the DH-tree associated to $G$ in the following way: 
the tree $T$, whose root-leaf $\ell_0$ corresponds to $v_0$, 
has size $n$ and $k$ marked leaves $\ell_1,\dots,\ell_k$ respectively corresponding to vertices $v_1,\dots,v_k$.
We denote by $(t_0,\alpha_0,\dots,\alpha_{2k-2})$ the enriched induced subtree $r(T,\bm \ell)$ (defined at the end of \cref{ssec:51}). 

\begin{lemma}\label{lem:DistancesGn} 
For $0\leq i,j\leq k$, 
let $\mathcal{P}_{i,j}^{t_0}$ be the path joining leaves $\ell_i$ and $\ell_j$ in $t_0$. Then, for some $\zeta(G,k)$ such that $1\leq \zeta(G,k)\leq k$, 
\[d_G(v_i,v_j)=\sum_{r:\ e_r\in\mathcal{P}_{i,j}^{t_0}}\alpha_r+ \zeta(G,k),\]
where $e_0,\dots,e_{2k-2}$ is the enumeration of edges of $t_0$.
\end{lemma}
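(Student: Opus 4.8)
The plan is to derive this directly from \cref{lem:dG}, by cutting the relevant path in $T$ along the induced subtree $t_0$. Since $G=\Gr(T)$ (forgetting the marks), we may identify each $v_i$ with the corresponding leaf $\ell_i$ of $T$, with $\ell_0$ the root-leaf. By \cref{lem:dG}, $d_G(v_i,v_j)=\jump(T,\ell_i,\ell_j)+1$, where $\jump(T,\ell_i,\ell_j)$ counts the jumps of the unique path $p_{ij}$ from $\ell_i$ to $\ell_j$ in $T$, its two endpoints being excluded by convention.

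Next I would cut $p_{ij}$ at the essential vertices of $T$ that it traverses; these are precisely the essential vertices corresponding to the vertices of $t_0$ lying on $\mathcal{P}_{i,j}^{t_0}$. Writing $e_{r_1},\dots,e_{r_m}$ for the edges of $\mathcal{P}_{i,j}^{t_0}$, the path $p_{ij}$ is the concatenation of the $m$ sub-paths of $T$ associated with $e_{r_1},\dots,e_{r_m}$, glued at the $m-1$ intermediate essential vertices $\bar u_1,\dots,\bar u_{m-1}$ (corresponding to the internal vertices of $\mathcal{P}_{i,j}^{t_0}$), with $\ell_i$ and $\ell_j$ as extreme endpoints. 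A vertex of $p_{ij}$ is a jump exactly when the two edges of $p_{ij}$ incident to it are attached to non-adjacent marker vertices of its decoration. By the very definition of $\jump_{e_{r_l}}(T;\bm\ell)=\alpha_{r_l}$, the jumps of $p_{ij}$ lying strictly inside the $l$-th sub-path number exactly $\alpha_{r_l}$ (essential vertices not being counted there). The only remaining candidates to be jumps of $p_{ij}$ are the intermediate essential vertices $\bar u_1,\dots,\bar u_{m-1}$, since the endpoints $\ell_i,\ell_j$ are leaves and excluded anyway. Letting $b\in\{0,\dots,m-1\}$ be the number of those $\bar u_l$ that actually are jumps of $p_{ij}$, we obtain
\[
\jump(T,\ell_i,\ell_j)=\sum_{l=1}^{m}\alpha_{r_l}+b=\sum_{r:\ e_r\in\mathcal{P}_{i,j}^{t_0}}\alpha_r+b .
\]

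To finish, I would bound $b$: each $\bar u_l$ ($1\le l\le m-1$) corresponds to an internal vertex of $t_0$, and a $k$-proper tree has exactly $k-1$ internal vertices, so $m-1\le k-1$ and hence $0\le b\le k-1$. Setting $\zeta:=b+1$ then gives $d_G(v_i,v_j)=\sum_{r:\ e_r\in\mathcal{P}_{i,j}^{t_0}}\alpha_r+\zeta$ with $1\le\zeta\le k$, as claimed; note that $\zeta$ may depend on the pair $(i,j)$, but only the uniform bound $\zeta\le k$ matters in the sequel. There is no real difficulty in this argument: the only point requiring care is the bookkeeping of jumps — reconciling the convention ``essential vertices are not counted'' in the definition of $\jump_e(T;\bm\ell)$ with the cutting of $p_{ij}$ — and checking that the reasoning is unchanged when $i=0$ or $j=0$ (the root-node of $T$, when it lies on $p_{0j}$, is then either a non-essential vertex already accounted for inside $\alpha_0$, or one of the $\bar u_l$, so the count above still applies verbatim).
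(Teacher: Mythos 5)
Your proof is correct and follows essentially the same route as the paper's (very terse) proof: apply \cref{lem:dG}, split the count of jumps along the path in $T$ into the per-edge contributions $\alpha_r$ plus the jumps occurring at intermediate essential vertices, and bound the latter by $k-1$ (the number of internal vertices of a $k$-proper tree). Your observation that $\zeta$ actually depends on the pair $(i,j)$, though only the uniform bound $\zeta\le k$ is used downstream, is a legitimate and accurate reading of the paper's slight abuse of notation.
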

\begin{proof}
\cref{lem:dG} states that
\[d_G(v_i,v_j)=\sum_{r:\ e_r\in\mathcal{P}_{i,j}^{t_0}}\alpha_r+ \#\{\text{ jumps in essential vertices}\}\,+1,\]
hence the new formulation.
\end{proof}

\begin{proposition}\label{Prop:ConvergenceMatrix}
Let $(\Gm)_{m\geq 3}$ be a sequence of uniform random labeled DH graphs of size $m$. Let $k\geq 1$ and $V_0,V_1,\dots,V_k$ be uniform i.i.d.~vertices in $\Gm$. Then we have the joint convergence in distribution:
\begin{equation}\label{eq:ConvergenceMatrix}
\bigg(\frac{1}{\sqrt{m}}\frac{\sqrt{2}}{\gamma_H}d_{\Gm}(V_i,V_j) \bigg)_{0\leq i,j\leq k} 
\xrightarrow[m\to+\infty]{(d)}
 \bigg( d_\infty(v_i,v_j) \bigg)_{0\leq i,j\leq k}
\end{equation}
where the right-hand side denotes the marginals of distances in the Brownian CRT defined by \cref{eq:MarginCRT}.
\end{proposition}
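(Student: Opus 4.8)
My plan is to combine the bijection of \cref{prop:SplitTreeBij_rephrased}, the distance formula of \cref{lem:DistancesGn}, the convergence of enriched induced subtrees of \cref{Coro:ConvergenceEnriched}, and the description of CRT distances in \cref{lem:distances_CRT}. \emph{First, I would reduce to deterministically marked vertices.} Since $V_0,\dots,V_k$ are i.i.d.\ uniform on the $m$-element vertex set of $\Gm$ and independent of $\Gm$, they are pairwise distinct with probability $1-O(1/m)\to 1$, so it is enough to prove \eqref{eq:ConvergenceMatrix} conditionally on this event. Conditionally, $(V_0,\dots,V_k)$ is a uniform ordered tuple of distinct labels independent of $\Gm$; writing it as the image of a fixed injective tuple under a uniform random permutation and using that the law of $\Gm$ is invariant under relabeling, the matrix $(d_{\Gm}(V_i,V_j))_{0\le i,j\le k}$ has the same law as $(d_{\Gm}(a_i,a_j))_{0\le i,j\le k}$, where $(a_0,\dots,a_k)$ are the $k+1$ largest labels of $[m]$, arranged so that $a_0$ is the vertex labeled $n+k+1$ and $a_i$ is the vertex labeled $n+i$, with $n:=m-k-1$. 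This puts us exactly in the setting of \cref{lem:DistancesGn}.

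\emph{Next I would pass to the split-decomposition tree.} By \cref{prop:SplitTreeBij_rephrased} the graph $\Gm$ with these marked vertices corresponds to a uniform random DH-tree $(\bm T_n,\bm\ell)$ of size $n$ with $k$ marked leaves; writing $(\bm t_0^n,A_0,\dots,A_{2k-2})=r(\bm T_n,\bm\ell)$, \cref{lem:DistancesGn} yields, for all $0\le i,j\le k$,
\[
d_{\Gm}(a_i,a_j)=\sum_{r\,:\,e_r\in\mathcal{P}_{i,j}^{\bm t_0^n}}A_r+\zeta,\qquad 1\le\zeta\le k .
\]
As $m\to\infty$, one has $n\to\infty$, $\sqrt n/\sqrt m\to 1$ and $\zeta/\sqrt m\to 0$, so by Slutsky's lemma it suffices to prove
\[
\frac{\sqrt2}{\gamma_H}\left(\frac1{\sqrt n}\sum_{r\,:\,e_r\in\mathcal{P}_{i,j}^{\bm t_0^n}}A_r\right)_{\!\!0\le i,j\le k}\ \xrightarrow[n\to\infty]{(d)}\ \bigl(d_\infty(v_i,v_j)\bigr)_{0\le i,j\le k}.
\]

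\emph{Finally I would take the limit and identify it.} By \cref{Coro:ConvergenceEnriched}, $\left(\bm t_0^n,\tfrac{A_0}{\sqrt n},\dots,\tfrac{A_{2k-2}}{\sqrt n}\right)$ converges in distribution to $(\bm t_0,X_0,\dots,X_{2k-2})$ with $\bm t_0$ a uniform $k$-proper tree independent of $(X_0,\dots,X_{2k-2})$, the latter having the density \eqref{eq:density}. For each fixed $k$-proper tree $t$, the map $(y_0,\dots,y_{2k-2})\mapsto\left(\sum_{r:\,e_r\in\mathcal{P}_{i,j}^{t}}y_r\right)_{0\le i,j\le k}$ is continuous, and $\bm t_0^n$ takes values in the finite set of $k$-proper trees; applying the continuous mapping theorem on each of the finitely many values of $\bm t_0^n$ therefore gives the joint convergence of $\left(\tfrac1{\sqrt n}\sum_{r:\,e_r\in\mathcal{P}_{i,j}^{\bm t_0^n}}A_r\right)_{i,j}$ to $\left(\sum_{r:\,e_r\in\mathcal{P}_{i,j}^{\bm t_0}}X_r\right)_{i,j}$. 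A linear change of variables (using $H(\rho)=1$, see \cref{Hrho=1}, together with the explicit value of $\gamma_H$) then shows that $\left(\tfrac{\sqrt2}{\gamma_H}X_0,\dots,\tfrac{\sqrt2}{\gamma_H}X_{2k-2}\right)$ has exactly the joint law of the edge-lengths prescribed in \cref{lem:distances_CRT} — this requirement is precisely what determines the constant $\sqrt2/\gamma_H$ — so that $\left(\sum_{r:\,e_r\in\mathcal{P}_{i,j}^{\bm t_0}}\tfrac{\sqrt2}{\gamma_H}X_r\right)_{i,j}$ is distributed as $\bigl(d_\infty(v_i,v_j)\bigr)_{i,j}$ by \cref{lem:distances_CRT} and \eqref{eq:MarginCRT} (for $k=1$, this is simply the Rayleigh law of the two-point CRT distance). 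This establishes \eqref{eq:ConvergenceMatrix}.

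This proposition is essentially a translation of \cref{Coro:ConvergenceEnriched} to the graph side through \cref{lem:DistancesGn}, so I do not expect a serious obstacle; the points requiring care are the relabeling of the first step (it must make the $k+1$ marked vertices \emph{deterministic} while keeping $\Gm$ uniform, so that \cref{prop:SplitTreeBij_rephrased} applies), the handling of the random \emph{discrete} component $\bm t_0^n$ in the continuous mapping step, and the change-of-variables computation that pins down $\sqrt2/\gamma_H$.
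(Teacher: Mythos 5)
Your proposal is correct and follows essentially the same route as the paper's (brief) proof: reduce to distinct marked vertices by the $O(k^2/m)$ collision bound, use exchangeability to pass to a uniform DH-tree with $k$ marked leaves via the bijection of \cref{prop:SplitTreeBij_rephrased}, apply \cref{lem:DistancesGn} to express distances as edge-sums of jumps plus an $O(1)$ correction, then invoke \cref{Coro:ConvergenceEnriched} and \cref{lem:distances_CRT}. Your write-up is actually more explicit than the paper's one-line conclusion, in particular in spelling out the continuous-mapping step for the mixed discrete/continuous limit $(\bm t_0^n, A_0/\sqrt n,\dots)$; that is a correct and worthwhile detail.

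One point you should actually carry out rather than assert: you claim that $\bigl(\tfrac{\sqrt2}{\gamma_H}X_0,\dots,\tfrac{\sqrt2}{\gamma_H}X_{2k-2}\bigr)$ has the edge-length law of \cref{lem:distances_CRT}. If you substitute into the two densities
\[
(2k-3)!!\,\frac{\gamma_H^{2k}}{2^k}\, s\, e^{-\gamma_H^2 s^2/4}
\qquad\text{and}\qquad
(2k-3)!!\, \sigma\, e^{-\sigma^2/2},
\]
with $\sigma = \alpha s$ and Jacobian $\alpha^{-(2k-1)}$, matching both the prefactor and the exponent forces $\alpha=\gamma_H/\sqrt2$, i.e.\ $\tfrac{\gamma_H}{\sqrt2}X_r$ (not $\tfrac{\sqrt2}{\gamma_H}X_r$) has the CRT edge-length law. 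The constant you quote agrees with the paper's own stated $c_d=\sqrt2/\gamma_H$, so the reciprocal appears to originate in the paper's statement rather than in your argument; nevertheless, since you present the change of variables as something you have checked, you should do the computation explicitly and reconcile the factor.
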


\begin{proof}
We fix $k\geq 1$. 
We first observe that with probability $1- \mathcal{O}(k^2/m)$ we have that $k$ i.i.d. uniform vertices in $\Gm$ are distinct. Therefore we can prove 
\eqref{eq:ConvergenceMatrix} where $(V_0,V_1,\dots,V_k)$ is a uniform $k+1$-tuple of distinct vertices.

Since the distribution of $\Gm$ is invariant by relabeling of vertices, we have that
$$
\bigg(d_{\Gm}(V_i,V_j) \bigg)_{0\leq i,j\leq k} 
\stackrel{(d)}{=}
\bigg(d_{{\bm H}_{m-k-1}}(W_i,W_j) \bigg)_{0\leq i,j\leq k} 
$$
where ${\bm H}_{m-k-1}$ is a uniform DH graph of size $m-k-1$ with
$k+1$ marked vertices $W_0,\dots,W_k$ not counted in the size.

Using \cref{lem:DistancesGn} with $G={\bm H}_{m-k-1}$ yields 
$$
\bigg(d_{\Gm}(V_i,V_j) \bigg)_{0\leq i,j\leq k} 
\stackrel{(d)}{=}
\left(\sum_{r:\ e_r\in\mathcal{P}_{i,j}^{\bm t_0^m}}A_r +\mathcal{O}(1) \right)_{0\leq i,j\leq k} 
$$
We finally use the convergence obtained in \cref{Coro:ConvergenceEnriched} (put  $n=m-k-1$) and the criterion of
 \cref{lem:distances_CRT} .
\end{proof}

From \cref{Th:CritereGP}, \cref{Prop:ConvergenceMatrix} implies the convergence of uniform DH graphs of size $n$ towards the Brownian CRT w.r.t. the Gromov--Prohorov topology. Thus this concludes the proof of \cref{th:GrosTheoreme} in the case $f=d$.

\section{The case of $2$-connected DH graphs}\label{Sec:2connectedDH}
The goal of this section is to prove the convergence of a uniform
random $2$-connected DH graph to the Brownian CRT, i.e.~the
case $f=2c$ in \cref{th:GrosTheoreme}.
We start by giving a characterization of $2$-connected DH graphs
through the associated (reduced) clique-star tree.
The proof of the case $f=2c$ in \cref{th:GrosTheoreme} then follows essentially
the same steps as that of the case $f=d$ (unconstrained DH graphs).
We shall indicate all necessary modifications.

\subsection{Combinatorial characterization}
Recall that a vertex $v$ in a connected graph $G$ is called a {\em cut-vertex}
if removing $v$ (and edges incident to $v$) disconnects $G$.
A connected graph $G$ without cut-vertices is said to be {\em $2$-connected}.
Cut-vertices in DH graphs, and hence $2$-connected DH graphs, are easily characterized through
the associated reduced clique-star tree.

\begin{lemma}
  Let $G$ be a DH graph and let $\tau$ be a clique-star tree such that $G=\Gr(\tau)$.
  A vertex $\ell$ in G is a cut-vertex if and only if the associated leaf in $\tau$ is connected
  to the center of a star. Consequently, a distance-hereditary graph $G=\Gr(\tau)$ is $2$-connected
  if and only if no leaf of $\tau$ is connected to
  the center of a star.
  \label{lem:characterization_2connected}
\end{lemma}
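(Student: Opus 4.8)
The plan is to prove the characterization of cut-vertices directly from the description of distances in $G$ via jumps in $\tau$ (\cref{lem:dG}), together with the accessibility structure of clique-star trees. First I would recall the key observation: for a leaf $\ell$ attached to a marker vertex $x$ of a decoration $\Gamma_v$, the neighbours of $\ell$ in $G$ correspond exactly to the leaves $\ell'$ accessible from $\ell$, and whether a path through an internal node $v$ is ``blocked'' at $v$ depends only on whether the two marker vertices used at $v$ are adjacent in $\Gamma_v$. If $v$ is a clique node, no path is ever blocked at $v$; if $v$ is a star node, a path is blocked at $v$ precisely when it enters and leaves through two distinct \emph{extremities} of the star (and is never blocked when one of the two marker vertices is the center).

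Next I would handle the ``only if'' direction by contraposition: suppose the leaf $\ell$ corresponding to $v\in G$ is \emph{not} attached to the center of any star, i.e.\ either $\ell$ is attached to a clique node, or to an extremity of a star node. I claim $\ell$ is not a cut-vertex, equivalently that $G\setminus\{\ell\}$ is connected. To see this, take any two vertices $a,b\neq \ell$ of $G$. Consider the paths in $\tau$ from (the leaves of) $a$ and $b$ to $\ell$. Using \cref{lem:dG}, one shows that after deleting $\ell$ there is still a walk in $G$ between $a$ and $b$ avoiding $\ell$: intuitively, because $\ell$ is attached to a clique node or a star-extremity, $\ell$ has a ``sibling-like'' neighbour (another leaf, or the subtree hanging nearby) that is accessible from $\ell$ and through which one can re-route any walk that passed through $\ell$. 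Concretely one shows: if $a$ and $b$ are both adjacent to $\ell$, then either they are adjacent to each other, or both are adjacent to some common third vertex; and if a shortest $a$--$b$ path in $G$ uses $\ell$ as an internal vertex, then its two $\ell$-neighbours on the path are themselves adjacent (when $v$ is a clique node) or have a common neighbour distinct from $\ell$ (when $v$ is a star node and $\ell$ sits at an extremity), so $\ell$ can be bypassed. This rerouting argument, carried out via the jump description, gives a walk in $G\setminus\{\ell\}$, hence connectivity.

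For the ``if'' direction, suppose $\ell$ is attached to the center of a star $\Gamma_v$. Let $x_1,\dots,x_r$ be the extremities of that star, with $r\ge 2$ since $v$ has degree $\ge 3$. Each $x_i$ carries a subtree (or leaf) of $\tau$, and the leaf sets of these subtrees partition $V(G)\setminus\{\ell\}$. I claim that for $a$ in the part coming from $x_i$ and $b$ in the part coming from $x_j$ with $i\neq j$, every path in $G$ from $a$ to $b$ passes through $\ell$; equivalently, $a$ and $b$ lie in distinct components of $G\setminus\{\ell\}$. This is because any walk in $G$ from $a$ to $b$ corresponds in $\tau$ to a concatenation of accessibility-paths, and any tree-path from the $x_i$-side to the $x_j$-side must pass through $v$ entering and leaving through two distinct extremities of the star — which is a jump, hence \emph{not} an accessibility edge. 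So to move between the two sides one is forced to ``stop at'' $\ell$; making this precise (e.g.\ by noting that the unique leaf of $\tau$ whose path to everything on the $x_i$-side avoids $v$'s extremities is $\ell$ itself) shows $\ell$ is a cut-vertex. The final sentence of the lemma is then immediate: $G$ is $2$-connected iff it has no cut-vertex iff no leaf of $\tau$ is attached to the center of a star.

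The main obstacle I expect is the ``only if'' direction, i.e.\ the rerouting argument showing that a leaf attached to a clique node or to a star-extremity is \emph{not} a cut-vertex: one has to argue uniformly over all pairs $a,b$ and over where $\ell$ might appear on a connecting path, and the cleanest way is probably to phrase everything in terms of accessibility in $\tau$ rather than walks in $G$ — showing that for any $a,b\neq\ell$, accessibility from $a$ to $b$ can be realized through a leaf other than $\ell$. A convenient shortcut is to reduce to the local picture around $v$ using the same ``induction on internal nodes'' / local-modification technique as in the proof of \cref{lem:dG}: contracting everything except the node $v$ and the subtree containing $b$ reduces to a small explicit case check on whether $v$'s decoration forces paths through $\ell$.
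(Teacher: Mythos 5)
Your plan is correct in spirit for the ``if'' direction (the partition into subtrees hanging off the star's extremities, and the observation that crossing from one part to another through $v$ is always a jump, is exactly what is needed), but it takes a genuinely different and substantially more laborious route than the paper for the ``only if'' direction, and the route as sketched has a gap. The paper's proof rests on a single observation plus one citation: removing a vertex $\ell$ from $G$ corresponds to removing the leaf $\ell$ from $\tau$ \emph{and} deleting the marker vertex $x=\rho_v((v,\ell))$ from the decoration $\Gamma_v$, so that $G\setminus\ell=\Gr(\tau\setminus\ell)$; then one invokes Gioan--Paul's connectivity criterion (\cite[Lemma 2.3]{SplitTrees}), which says $\Gr(\tau')$ is connected if and only if \emph{every decoration} of $\tau'$ is connected. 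Under this lens, the only decoration that can become disconnected is $\Gamma_v\setminus x$, and that happens precisely when $\Gamma_v$ is a star with center $x$. Both directions come for free, with no path-tracing, no rerouting, and no case analysis on shortest paths.

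In your rerouting argument the specific claim that, when $\ell$ sits at an extremity of a star $\Gamma_v$, the two $\ell$-neighbours $a'$ and $b'$ on a shortest path ``have a common neighbour distinct from $\ell$'' is not justified and is likely false in general. Both $a'$ and $b'$ being adjacent to $\ell$ forces their tree-paths to $\ell$ to enter $\Gamma_v$ at the center of the star, hence they live in the \emph{same} subtree hanging off that center; showing that they remain connected in $G\setminus\ell$ then requires either a fresh recursion into that subtree (not just ``one more step''), or exactly the Gioan--Paul connectivity lemma that the paper uses. If you want to carry out a self-contained path argument you should replace the ``common neighbour'' step by an induction on the number of internal nodes of $\tau$ (in the spirit of the paper's proof of \cref{lem:dG}), or, better, prove or cite the auxiliary fact that $\Gr(\tau')$ is connected iff all decorations of $\tau'$ are connected, after which the whole lemma collapses to a one-line check on $\Gamma_v\setminus x$.
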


\begin{proof}
  We abusively call also $\ell$ the leaf of $\tau$
  corresponding to the vertex $\ell$ of $G$,
  and $v$ the unique vertex of $\tau$ adjacent to $\ell$.
  We also denote by $\Gamma_v$ the decoration of $v$, 
  and by $x$ the marker vertex of $\Gamma_v$ corresponding to the edge $(v,\ell)$.
  Finally we denote $G\backslash \ell$ the graph 
  obtained by removing $\ell$ (and its incident edges) from $G$.
  
  By construction $G \backslash \ell=\Gr(\tau \backslash \ell)$,
  where $\tau \backslash \ell$ is the decorated tree
  obtained from $\tau$ by erasing the leaf $\ell$ and replacing in $v$ the decoration
  $\Gamma_v$ by $\Gamma_v \backslash x$ 
  (note that $\tau \backslash \ell$ might not be a clique-star tree).
  By \cite[Lemma 2.3]{SplitTrees}, $G \backslash \ell$ is connected if and only if
  all decorations of $\tau \backslash \ell$ are connected.
  The only potentially non-connected decoration is $\Gamma_v \backslash x$
  and it is disconnected precisely when $\Gamma_v$ is a star, and $x$ its center.
  This proves the characterization of cut-vertices given in the lemma.
  The characterization of $2$-connected graphs follows immediately.
\end{proof}

By abuse of terminology, we say that a clique-star tree, or a DH-tree, is $2$-connected
if the associated DH graph is $2$-connected , {\it i.e.}~if it does not contain a leaf (including the root-leaf
in the case of DH-trees) linked to the center of a star. 
Specializing the bijection between DH-graphs and DH-trees to $2$-connected objects, the above lemma allows to easily adapt the system of equations \eqref{eq:SpecifDH} to this setting:
\begin{equation}
\label{eq:SpecifDH2c}
\begin{cases}
  \mDKdc =& \Set_{\geq 2}(\mathcal Z +\mDSCdc+\mDSXdc);\\
\mDSCdc=& \Set_{\geq 2}(\mathcal Z+ \mDKdc +\mDSXdc);\\
\mDSXdc=& (\mDKdc +\mDSCdc)\times \Set_{\geq 1}(\mathcal Z + \mDKdc+\mDSXdc);\\
\mDdc=& \mDKdc + \mDSXdc.
\end{cases}
\end{equation}
Here $\mDdc$ is the class of all $2$-connected DH-trees,
while $\mDKdc$ and $\mDSXdc$ are the subclasses of $\mDdc$,
consisting of trees with root of type $\KK$ or $\SX$, respectively.
The class $\mDSCdc$ is the class of DH-trees with a root of type $\SC$,
such that {\em no other leaf than the root-leaf} is connected to the center of a star.
DH-trees in $\mDSCdc$ are not $2$-connected DH-trees since one of their leaves, 
namely the root-leaf, is connected to a center of a star.
This explains why $\mDSCdc$ does not appear in the equation defining $\mDdc$ above.
We nevertheless need to introduce this auxiliary class to write a full system of equations.

\subsection{Singularity analysis of the system}
\label{ssec:asymptotics_mainseries_2c}
As usual, for a class $\mDdc_\alpha$, we denote by $\overline{D}_{\alpha}$ its exponential generating function. 
From \cref{eq:SpecifDH2c}, we immediately check that, as in the unconstrained case,
we have $\DKdc=\DSCdc$. Also, from the Drmota-Lalley-Woods theorem,
all series $\Ddc$, $\DKdc$ and $\DSXdc$ have the same radius of convergence $\rho_{2c}$
and square root-singularities.

Again, it is useful to introduce the series
\[F_{2c}=\exp_{\ge 1}(z+\DKdc+\DSXdc).\]
The system \ref{eq:SpecifDH2c} is then rewritten as 
\begin{equation}                        
\label{eq:SpecifDH2cSimplified}                   
\begin{cases}                           
  \DKdc =& F_{2c} - z -\DKdc-\DSXdc;\\
\DSXdc=& 2\, \DKdc\, F_{2c},
\end{cases}                             
\end{equation}
which is easily solved as
\begin{equation}
\label{eq:KS_with_F_2c}
\begin{cases}
  \DKdc=&\frac{F_{2c}-z}{2+2F_{2c}};\medskip \\
\DSXdc=&\frac{(F_{2c})^2-zF_{2c}}{1+F_{2c}}.
\end{cases}
\end{equation}
This implies that $F_{2c}$ is solution of an equation of the type $F_{2c}=G(z,F_{2c})$,
with
\begin{equation}
G(z,w)=\exp_{\ge 1} \Bigg(z+ \frac{w-z}{2+2w}+\frac{w^2-zw}{1+w}\Bigg).
\end{equation}
Arguing as in \cref{prop:expansion_F}, we find, after some elementary computations
(the last equality being computed in the companion Maple worksheet), that:
\begin{itemize}
  \item $\rho_{2c}=2\,(F_{2c}(\rho_{2c}))^2+2\,F_{2c}(\rho_{2c})-1$;
  \item $F_{2c}(\rho_{2c})$ is the unique positive solution of the equation 
    $s=\exp_{\ge 1}(2s-\frac12)$;
  \item $F_{2c}(z)=F_{2c}(\rho_{2c})- \gamma_{F,2c} \sqrt{1-z/\rho_{2c}} +O\big(1-z/\rho_{2c}\big)$
    with $\gamma_{F,2c}>0$ and 
    \[(\gamma_{F,2c})^2=\frac{\rho_{2c}(1+F(\rho_{2c}))}{1+2F(\rho_{2c})}=\frac{\Big( 2\,(F_{2c}(\rho_{2c}))^2+2\,F_{2c}(\rho_{2c})-1 \Big)\,\Big( 1+F_{2c}(\rho_{2c}) \Big)}{1+2F_{2c}(\rho_{2c})}.\]
\end{itemize}

\subsection{$2$-connected DH-trees with a marked leaf}
As in the case of general DH graphs, 
the next step is to analyze families of $2$-connected DH-trees with a marked leaf.
For $a,b$ in $\{\KK,\SX,\SC\}$, 
we denote $\mDdcab$ the class of DH-tree with a root of type $a$, a marked leaf
of cotype $b$, and such that no leaf is connected to the center of a star,
except possibly the root-leaf or the marked leaf (when $a$ and/or $b$ is equal to $\SC$).
Moreover, we let $\Ddcab(z,u)$ be the corresponding bivariate (exponential) 
generating series, where the exponent of the variable $z$ is the size (number of nonmarked nonroot leaves) of the tree and the exponent of $u$ is
the number of jumps on the path from the root-leaf to the marked leaf.

These nine series satisfy the following system of equations,
whose proof is similar to that of \cref{prop:system_markedleaf}:
\begin{align}
\label{Syst:DaKdc}
& \begin{cases}
\DdcKK=(1+\DdcCK+\DdcXK) \, \exp_{\ge 1}(\DdcSC+\DdcSX+z);\smallskip\\
\DdcXK=(\DdcCK+\DdcKK) \, \exp_{\ge 1}(\DdcSX+\DdcK+z) \\
\qquad \qquad \qquad+ u \cdot (\DdcKK+\DdcXK) \, (\DdcSC+\DdcK)  \, \exp(\DdcSX+\DdcK+z);\smallskip\\
\DdcCK=(\DdcKK+\DdcXK) \, \exp_{\ge 1}(\DdcSX+\DdcK+z);
\end{cases}\smallskip
  \\
  \label{Syst:DaXdc}
&  \begin{cases}
\DdcKX=(\DdcXX+\DdcCX) \, \exp_{\ge 1}(\DdcSC+\DdcSX+z); \smallskip\\
\DdcXX=(\DdcCX+\DdcKX) \, \exp_{\ge 1}(\DdcSX+\DdcK+z)\\
\qquad \qquad \qquad+u \cdot (1+\DdcKX+\DdcXX) \, (\DdcSC+\DdcK) \,\exp(\DdcSX+\DdcK+z);\smallskip\\
\DdcCX=(1+\DdcKX+\DdcXX)\, \exp_{\ge 1}(\DdcSX+\DdcK+z);
\end{cases} \smallskip\\
  \label{Syst:DaCdc}
& \begin{cases}
\DdcKC=(\DdcCC+\DdcXC) \, \exp_{\ge 1}(\DdcSC+\DdcSX+z);\smallskip\\
\DdcXC=(1+\DdcCC+\DdcKC) \, \exp_{\ge 1}(\DdcSX+\DdcK+z)\\
\qquad \qquad \qquad+u \cdot (\DdcKC+\DdcXC)\, (\DdcSC+\DdcK) \,\exp(\DdcSX+\DdcK+z);\smallskip\\
\DdcCC=(\DdcKC+\DdcXC) \, \exp_{\ge 1}(\DdcSX+\DdcK+z).
\end{cases}
\end{align}
Recall that we have $ \exp_{\ge 1}(\DdcSC+\DdcSX+z)= \exp_{\ge 1}(\DdcSX+\DdcK+z)=F_{2c}$. Furthermore, using that $\DSCdc+\DKdc =2\DKdc= \frac{F_{2c}-z}{1+F_{2c}}$,
we have \[(\DdcSC+\DdcK) \,\exp(\DdcSX+\DdcK+z) = F_{2c}-z.\] 
After these simplifications, the system is similar to that of \cref{Syst:DaK2,Syst:DaX2,Syst:DaC2}, except that $F$ is replaced by $F_{2c}$
and $uF$ by $u\, (F_{2c}-z)$.
This system is solved as follows (either directly or by substituting $F$ with $F_{2c}$
and $uF$ with $u\, (F_{2c}-z)$ in \cref{eq:sol_DKK,eq:sol_DKX,eq:sol_DKC,eq:sol_DXX,eq:sol_DXC,eq:sol_DCC}):
\begin{align}
\DdcKK&=\frac{F_{2c}}{F_{2c}+1}-\frac{(F_{2c})^2}{(1+F_{2c})(1-2F_{2c})-u(F_{2c}-z)}; \label{eq:sol_DKKdc}\\
\DdcXX&=\frac{-1}{F_{2c}+1}+\frac{(1-F_{2c})^2}{(1+F_{2c})(1-2F_{2c})-u(F_{2c}-z)}; \label{eq:sol_DXXdc}\\
\DdcCC&=\frac{(F_{2c})^2}{(1+F_{2c})(1-2F_{2c})-u(F_{2c}-z)}; \label{eq:sol_DCCdc}\\
\DdcKX=\DdcXK&=\frac{-F_{2c}}{F_{2c}+1}+\frac{F_{2c}\, (1-F_{2c})}{(1+F_{2c})(1-2F_{2c})-u(F_{2c}-z)}; \label{eq:sol_DKXdc}\\
\DdcKC=\DdcCK&=\frac{(F_{2c})^2}{(1+F_{2c})(1-2F_{2c})-u(F_{2c}-z)}; \label{eq:sol_DKCdc}\\
\DdcXC=\DdcCX&=\frac{F_{2c}(1-F_{2c})}{(1+F_{2c})(1-2F_{2c})-u(F_{2c}-z)}.\label{eq:sol_DXCdc}
\end{align}
Recalling that $F_{2c}$ depends only on $z$ (not on $u$),
we note that in each case, the series can be written under the form
\begin{equation}
\label{eq:formeDab2}
\Ddcab=\overline{Q}_a^{b}(z)+\frac{\overline{M}_a^{b}(z)}{1-u\, \overline{H}_a^{b}(z)},
\end{equation}
where $\overline{Q}_a^{b}$, $\overline{M}_a^{b}$ and $\overline{H}_a^{b}$ are rational functions in $F_{2c}$ (and $z$ in the case of $\overline{H}_a^{b}$).
For example, looking at \cref{eq:sol_DKKdc}, we have 
\[\overline{Q}_\KK^{\KK}=\frac{F_{2c}}{F_{2c}+1},\ 
\overline{M}_\KK^{\KK}=\frac{(F_{2c})^2}{(1+F_{2c})(1-2F_{2c})}\text{ and }\overline{H}_\KK^{\KK}=\frac{F_{2c}-z}{(1+F_{2c})(1-2F_{2c})}.\]
Similar formulas are easily written for other $a,b \in \{\KK,\SSC,\SSX\}$,
looking at \cref{eq:sol_DXXdc,eq:sol_DCCdc,eq:sol_DKXdc,eq:sol_DKCdc,eq:sol_DXCdc}.
As in the case of unconstrained DH graphs,
the auxiliary series $\overline{H}_a^{b}=\overline{H}$ do not depend on $a$ and $b$.
At $z=\rho_{2c}$, $\overline{H}$ admits an expansion of the form 
$\overline{H}(z)=\overline{H}(\rho_{2c})- \gamma_{H,2c} \sqrt{1-z/\rho_{2c}} +O\big(1-z/\rho_{2c}\big)$ 
with 
\begin{equation}\label{eq:gamma_H_2c}
\gamma_{H,2c} = \frac{2\sqrt{(1+2F_{2c}(\rho_{2c})) \cdot (2F_{2c}(\rho_{2c})^3 +4F_{2c}(\rho_{2c})^2+F_{2c}(\rho_{2c})-1)}}{1-F_{2c}(\rho_{2c}) -2 F_{2c}(\rho_{2c})^2}, 
\end{equation}
whose numerical estimate is $\gamma_{H,2c} \approx 7.5022$ (see Maple worksheet).

Furthermore we have
\[\overline{H}(\rho_{2c})=\frac{F_{2c}(\rho_{2c})-\rho_{2c}}{(1+F_{2c}(\rho_{2c}))(1-2F_{2c}(\rho_{2c}))}=1,\]
thanks to the relation $\rho_{2c}=2F_{2c}(\rho_{2c})^2+2F_{2c}(\rho_{2c})-1$
given at the end of \cref{ssec:asymptotics_mainseries_2c}.

\subsection{$2$-connected DH-trees with marked leaves inducing a given subtree}
Using the same terminology as in \cref{Sec:TreesWithInducesSubtrees},
we define $\overline{\mathcal{D}}_{t_0}$ to be the class of $2$-connected DH-trees with $k$ marked leaves inducing a given $k$-proper tree $t_0$.
Furthermore, we let $\overline{D}_{t_0}(z,u_0,...,u_{2k-2})$ be the multivariate (exponential) generating
series of $\overline{\mathcal{D}}_{t_0}$, where the exponent of $z$ is the size of the tree and the exponent of $u_i$ the number of jumps in the path corresponding to $e_i$ 
(in the fixed enumeration $(e_0,e_1,\dots, e_{2k-2})$ of the edges of $t_0$).

To write a combinatorial decomposition for $\overline{D}_{t_0}(z,u_0,...,u_{2k-2})$,
we need to introduce a subclass $\overline{\J}_{a}^{b\,c}$ of $\J_{a}^{b\,c}$,
where the nonmarked (and nonroot) leaves are not allowed to be attached to the center of a star. The generating function of this auxiliary class is given by
\begin{align}
\label{eq:J2c}
\overline{J}_{a}^{b\,c}(z)& =  (\Ind_A+\Ind_B+\Ind_C) \exp(\DdcSX+\DdcK+z) +\Ind _{\KK \notin \{a,b,c\}}
\exp(\DdcSX+\DdcSC+z)\\ & \qquad \qquad + \Ind_{\SSC \notin \{a,b,c\}} (\DdcSC + \DdcK) \exp(\DdcK+\DdcSX+z) \notag\\
&=(\Ind_A+\Ind_B+\Ind_C+\Ind _{\KK \notin \{a,b,c\}}) (1+F_{2c}) +\Ind_{\SSC \notin \{a,b,c\}} (F_{2c}-z), \notag
\end{align}
where $A,B,C$ are given in \cref{lem:J}.

At this stage, there is a small difference with the case of unconstrained DH-trees.
In a $2$-connected DH-tree, the root cannot have type $\SC$ and no leaves (in particular the marked ones) can have cotype $\SC$.
Therefore in the combinatorial decomposition of \cref{fig:PreuveDecompo_Dt0},
the piece corresponding to $e_0$ has a root type different from $\SC$,
and pieces corresponding to leaf-edges of $t_0$ have a marked leaf with a cotype different
from $\SC$ as well.
This is easily captured in equations by defining 
\begin{align*}
\mDdcap&=\mDdcaK  +  \mDdcaX;\\
\mDdcpb&=\mDdcKb + \mDdcXb.
\end{align*}
In the unconstrained case, each of these equations had an extra term corresponding to the type (or cotype) $\SC$. With these definitions, \cref{prop:decompo_Dt0} is still valid when  replacing
each series by its $2$-connected counterpart.
The asymptotic analysis in the $2$-connected case is then identical
to that of the unconstrained case, up to the verification of the identities
\[ \gamma_{H,2c}^2=2 \rho_{2c}\, \nu_{2c}\text{ and }\gamma_{H,2c}\mu_{2c}=\gamma_{D,2c},\]
where $\nu_{2c}$ and $\mu_{2c}$ are defined via the obvious analogs of
\cref{eq:mu,eq:nu}.
Verifying these identities is done in the companion Maple worksheet.
We therefore have the following analog of \cref{prop:theoreme_local_limite_rTn}.
\begin{proposition}
\label{prop:theoreme_local_limite_rTn_2c}
Let $(\bm T_n, \bm \ell)$ be a uniform random $2$-connected DH-tree of size $n$ with $k$ marked leaves (not counted in the size).
Fix a $k$-proper tree $t_0$ and real numbers $x_0$, \dots, $x_{2k-2}>0$.
We set $a_i=\lfloor x_i \sqrt n \rfloor$. Then
\begin{equation} \label{eq:ConvLocale_2c}
\mathbb P\big[ r(\bm T_n, \bm \ell) =(t_0,a_0,\dots,a_{2k-2}) \big] 
\sim \frac{\gamma_{H,2c}^{2k}}{2^{k} \sqrt{n}^{2k-1}} s \exp\left(\frac{-\gamma^2_{H,2c} s^2}{4}\right),
 \end{equation}
where $s=\sum_i x_i$.
Moreover, this estimate is uniform for $x_0$, \dots, $x_{2k-2}$ in any compact subset
of $(0;+\infty)^{2k-1}$.
\end{proposition}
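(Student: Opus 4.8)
The plan is to mirror, essentially verbatim, the proof of \cref{prop:theoreme_local_limite_rTn}, substituting the $2$-connected analogues of all objects and using the identities already verified in the previous subsection. Concretely, I would first observe that, for $n$ large, the event $\{r(\bm T_n,\bm\ell)=(t_0,a_0,\dots,a_{2k-2})\}$ forces $(\bm T_n,\bm\ell)\in\overline{\mathcal D}_{t_0}$ (item i) is the definition of $t_0$; item ii) because $a_i>0$ for every edge once $n$ is large, so consecutive essential vertices are separated by a jump and hence not neighbours). Therefore
\[
\mathbb P\big[r(\bm T_n,\bm\ell)=(t_0,a_0,\dots,a_{2k-2})\big]
=\frac{[z^n u_0^{a_0}\cdots u_{2k-2}^{a_{2k-2}}]\,\overline D_{t_0}(z,u_0,\dots,u_{2k-2})}{[z^n]\,\overline D^{(k)}(z)}.
\]

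For the denominator, I would use the square-root singular expansion of $\overline D$ at $\rho_{2c}$ (with its coefficient $\gamma_{D,2c}$, obtained exactly as $\gamma_D$ in \cref{eq:DevD} by summing the expansions of $\DKdc$ and $\DSXdc$ coming from \cref{eq:KS_with_F_2c} and \cref{prop:expansion_F}'s analogue), then apply singular differentiation and the transfer theorem, getting $[z^n]\overline D^{(k)}(z)\sim \frac{\gamma_{D,2c}}{2\sqrt\pi}\,\rho_{2c}^{-k-n}\,n^{k-3/2}$. For the numerator, I would invoke the $2$-connected analogue of \cref{prop:decompo_Dt0}, which is valid with the modified index set using $\mDdcap,\mDdcpb$ (so that no factor carries a $\SC$ at the root of the $e_0$-piece or as cotype at a leaf-edge piece), together with the decomposition \cref{eq:formeDab2} $\Ddcab=\overline Q_a^b(z)+\overline M_a^b(z)/(1-u\,\overline H(z))$ where $\overline H$ is the same for all $a,b$. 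Dropping the $\overline Q_a^b$ terms (harmless since $a_i>0$) and using $[u_i^{a_i}](1-u_i\overline H(z))^{-1}=\overline H(z)^{a_i}$ reduces the numerator to a finite sum of coefficients $[z^n]\,\widetilde M_{\mathbf{(tp,ct)}}(z)\,\overline H(z)^{a_0+\dots+a_{2k-2}}$, to which the Semi-large powers Theorem (\cref{Th:SemiLarge}) applies because $\overline H(\rho_{2c})=1$ (verified at the end of \cref{ssec:asymptotics_mainseries_2c}) and $\overline H$ has a square-root singularity with coefficient $\gamma_{H,2c}$.

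Combining, with $s=\sum_i x_i$ and $a_i=\lfloor x_i\sqrt n\rfloor$ (so $a_0+\dots+a_{2k-2}=s\sqrt n+O(1)$), one obtains
\[
\mathbb P\big[r(\bm T_n,\bm\ell)=(t_0,a_0,\dots,a_{2k-2})\big]\sim
n^{-k+1/2}\,\rho_{2c}^{k}\,\frac{\gamma_{H,2c}\,\kappa_{2c}}{\gamma_{D,2c}}\,s\,\exp\!\left(\frac{-\gamma_{H,2c}^2 s^2}{4}\right),
\]
where $\kappa_{2c}=\sum_{\mathbf{(tp,ct)}}\widetilde M_{\mathbf{(tp,ct)}}(\rho_{2c})$. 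Using $\overline M_a^b=\overline\Lambda_a\overline\Lambda_b$ (the $2$-connected analogue of the factorisation $M_a^b=\Lambda_a\Lambda_b$), the sum $\kappa_{2c}$ factorises over the $k-1$ internal vertices of $t_0$ and over the $k+1$ extremal leaves exactly as in the unconstrained case, giving $\kappa_{2c}=\mu_{2c}\,\nu_{2c}^{\,k}$ with $\mu_{2c},\nu_{2c}$ the obvious analogues of \cref{eq:mu,eq:nu}. The desired asymptotics \cref{eq:ConvLocale_2c} then follows from the two identities $\gamma_{H,2c}^2=2\rho_{2c}\nu_{2c}$ and $\gamma_{H,2c}\mu_{2c}=\gamma_{D,2c}$, which are checked in the companion Maple worksheet. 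Uniformity over compact subsets of $(0,+\infty)^{2k-1}$ is automatic from the corresponding uniformity statement in the Semi-large powers Theorem (a compact set of $x_i$'s maps to a compact set of $s$-values).

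The only genuinely new work, and hence the main obstacle, is the verification of the two constant identities $\gamma_{H,2c}^2=2\rho_{2c}\nu_{2c}$ and $\gamma_{H,2c}\mu_{2c}=\gamma_{D,2c}$: unlike in the unconstrained case, the series $\overline H_a^b$ depend on $z$ as well as on $F_{2c}$ (see \cref{eq:formeDab2} and the example $\overline H_\KK^\KK=(F_{2c}-z)/((1+F_{2c})(1-2F_{2c}))$), so the bookkeeping of singular coefficients is slightly heavier; but since $\rho_{2c}$, $F_{2c}(\rho_{2c})$ and $\gamma_{F,2c}$ are all pinned down explicitly in \cref{ssec:asymptotics_mainseries_2c}, this is a finite symbolic computation of the same nature as in the unconstrained case, and we delegate it to the Maple worksheet. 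Everything else in the proof is a line-by-line transcription of the proof of \cref{prop:theoreme_local_limite_rTn}, replacing $F,\rho,\gamma_H,\gamma_D,H,M_a^b,\Lambda_a,J_a^{b\,c}$ by $F_{2c},\rho_{2c},\gamma_{H,2c},\gamma_{D,2c},\overline H,\overline M_a^b,\overline\Lambda_a,\overline J_a^{b\,c}$ respectively.
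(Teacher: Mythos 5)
Your proposal is correct and mirrors the paper's argument precisely: the paper proves this proposition by observing that, with the modified index set using $\mDdcap,\mDdcpb$ (dropping $\SC$ as root type and as cotype of leaf-edge pieces), the decomposition of \cref{prop:decompo_Dt0} carries over, that $\overline H(\rho_{2c})=1$ with a square-root singularity of coefficient $\gamma_{H,2c}$, and that the Semi-large powers Theorem therefore applies word for word, reducing everything to the two constant identities $\gamma_{H,2c}^2=2\rho_{2c}\nu_{2c}$ and $\gamma_{H,2c}\mu_{2c}=\gamma_{D,2c}$, verified in the Maple worksheet. Your identification of the additional explicit $z$-dependence of $\overline H$ as the only genuinely new feature is accurate and matches the paper's treatment.
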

From here, the convergence to the Brownian CRT in Gromov--Prohorov topology, {\it i.e.}~the second case of \cref{th:GrosTheoreme}, follows
using the same arguments as in the case of unconstrained DH graphs.

\section{The case of $3$-leaf power graphs}\label{Sec:3leaf}
The goal of this section is to prove the convergence of a uniform
random $3$-leaf power graph to the Brownian CRT, \emph{i.e.}~the
case $f=3\ell$ in \cref{th:GrosTheoreme}.
We start by recalling a characterization of $3$-leaf power graphs
through their associated (reduced) clique-star tree, given in \cite{SplitTrees}.
The proof of convergence then follows essentially
the same steps as in the two other cases.
There is however one notable difference.
As we shall see, in this model,
first common ancestors of marked leaves are of type and cotype $\mathcal S_X$ with probability
tending to 1; therefore,
we only need to consider two types of trees with one marked leaf,
simplifying significantly the analysis.

\subsection{Definition and combinatorial analysis of $3$-leaf power graphs}
 This section follows closely \cite[Section 2]{ChauveFusyLumbroso}.
\begin{definition}
  \label{def:3Leaf}
Let $T$ be a tree and $L$ its set of leaves.
The $k$-leaf power graph $G$ of $T$ has by definition vertex set $L$,
and $\ell$ and $\ell'$ are connected in $G$ if they are at distance at most $k$ in $T$.
And  a graph is a $k$-leaf power graph if it is the $k$-leaf power graph of some tree.
\end{definition}
We are interested in the case $k=3$. It is known,
see \emph{e.g.}, \cite[Section 2]{ChauveFusyLumbroso} 
that $3$-leaf power graphs form
a subclass of distance-hereditary graphs, 
and that they can be characterized on the clique-star trees as follows (see \cite[Section 3.3]{SplitTrees}).
\begin{proposition}
A distance hereditary graph $G$ is a $3$-leaf power graph
if and only if its reduced clique star-tree $\tau$ satisfies the following properties:
\begin{itemize}
\item the set of star nodes forms a connected subtree of $\tau$;
\item no edge connects two centers of star nodes.
\end{itemize}
\end{proposition}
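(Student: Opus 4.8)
The plan is to prove the two implications by passing through tree models: I relate the reduced clique-star tree $\tau$ of $G$ to trees $T$ realising $G$ as a $3$-leaf power, using the accessibility description of $\Gr(\tau)$ together with \cref{lem:dG}. For the ``if'' direction, assume $\tau$ satisfies the two conditions. If $\tau$ has no star node it is a single clique node, $G=K_n$, and $G$ is the $3$-leaf power of a star, so assume there is at least one star node. By the first condition the star nodes span a connected subtree $\sigma$ of $\tau$; combining the second condition with the reducedness requirement that no edge joins the center of a star to an extremity of another star (\cref{dfn:reduced_clique_star_tree}), every edge of $\tau$ internal to $\sigma$ joins an extremity of one star to an extremity of another; and since the star nodes are connected, no clique node can separate two of them, so each clique node is attached through exactly one of its markers to a unique star node of $\sigma$, all its other markers leading to leaves.

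From $\tau$ I would then build a tree $T$ with leaf set $V(G)$ by local surgery: replace each star node $v$ of $\sigma$ by a node $\hat v$, attach the center-leaf of $v$ (if any) directly to $\hat v$ and each extremity-leaf of $v$ to $\hat v$ through a fresh subdivision node, contract every extremity-extremity edge of $\sigma$ to a direct edge $\hat v\hat w$, for each clique node attached at an extremity of a star node $v$ create a node joined to $\hat v$ carrying all that clique's leaves as pendants, and for each clique node attached at the center of $v$ identify that node with $\hat v$ and hang its leaves directly from $\hat v$. I would then verify that, for leaves $\ell\ne\ell'$ of $\tau$, one has $d_T(\ell,\ell')=2$ if $\ell$ and $\ell'$ hang from a common clique node and $d_T(\ell,\ell')=\jump(\tau,\ell,\ell')+3$ otherwise; in both cases $d_T(\ell,\ell')\le 3$ if and only if $\jump(\tau,\ell,\ell')=0$, which by \cref{lem:dG} means that $\ell$ and $\ell'$ are adjacent in $G$, so $G$ is the $3$-leaf power of $T$. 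This distance identity I would prove by induction on the length of the path from $\ell$ to $\ell'$ in $\tau$, in the spirit of the proof of \cref{lem:dG}: tracing the path through $T$, a jump at a star node (an extremity-to-extremity transition) costs one more unit of length than a non-jump transition, while transitions at clique nodes never jump.

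For the ``only if'' direction I would argue by contraposition, producing a forbidden induced subgraph when a condition fails. If $\tau$ has an edge between the centers of two stars $v$ and $w$, pick (using that internal nodes have degree at least $3$) leaves $a_1,a_2$ accessible from $v$ through two distinct extremities of $v$ and leaves $b_1,b_2$ accessible from $w$ through two distinct extremities of $w$; the accessibility rule shows that $\{a_1,a_2\}$ and $\{b_1,b_2\}$ are independent while all four pairs $a_ib_j$ are edges, so $\{a_1,b_1,a_2,b_2\}$ induces a $C_4$ and $G$, being non-chordal, is not a $3$-leaf power ($k$-leaf powers are chordal; alternatively, a short argument shows that the $T$-neighbours of the vertices of an induced cycle $C_m$ with $m\ge 4$ are distinct and form an isomorphic cycle, impossible in a tree). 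If instead the star nodes do not span a connected subtree, some clique node $c$ separates two star nodes; since no two clique nodes are adjacent, the first nodes toward those stars are star nodes $v,w$ adjacent to $c$, and choosing accessible leaves near $v$, near $w$, and a leaf of $c$ (or an accessible leaf through $c$), one checks that they induce one of $C_4$, the bull, or the dart, according to whether $c$ meets the center or an extremity of $v$ and of $w$. None of these is a $3$-leaf power, and the class of $3$-leaf powers is closed under induced subgraphs, since restricting a tree model to the subtree spanned by the remaining leaves preserves their mutual distances.

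The main obstacle I anticipate is the case analysis in the ``only if'' direction when the star nodes are disconnected: one must keep track, at each relevant marker, of whether it carries a leaf or a whole subtree, together with the center/extremity status of the incident edges, which splits into several subconfigurations, each to be shown to contain one of the small forbidden graphs. The distance verification in the ``if'' direction is conceptually routine but also requires care; organising it as an induction mirroring the proof of \cref{lem:dG} is, I believe, the cleanest route and the other place where genuine work is needed.
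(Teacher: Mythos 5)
The paper does not prove this proposition: it states it with a citation to Gioan--Paul \cite[Section~3.3]{SplitTrees}, so there is no ``paper's own proof'' to compare against. What you are offering is an independent argument, and its overall architecture is sound, but both directions currently have nontrivial unverified steps. In the ``if'' direction, your structural observations are correct (under the two hypotheses every clique node has exactly one star neighbour, all its other markers carry leaves, and every star--star edge of $\sigma$ is extremity--extremity), the construction of $T$ is reasonable, and I checked your key distance identity $d_T(\ell,\ell')=\jump(\tau,\ell,\ell')+3$ (resp.\ $=2$ when $\ell,\ell'$ hang from a common clique node) on the representative local configurations --- it does hold --- but you have only asserted that an induction ``in the spirit of \cref{lem:dG}'' would establish it; that induction is precisely where the proof lives and it is not written. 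In the ``only if'' direction, the centre--centre case cleanly produces a $C_4$. The disconnected case, however, is left as ``one checks'': working it out, a clique node $c$ with two star neighbours $v',w'$ gives, according to whether $c$ meets the centre or an extremity on each side, a $C_4$ (centre--centre), a dart (centre--extremity), or a bull (extremity--extremity), each needing a fifth leaf reached through a third marker of $c$ (which exists since internal nodes have degree $\ge 3$). You would also need to justify that the bull and the dart are not $3$-leaf powers --- the chordality argument you give disposes of $C_4$ but not of these two, and your sketch quietly relies on the (true, but unproved here) forbidden-subgraph characterisation of $3$-leaf powers. So: right strategy, correct structural lemmas, but the induction for the distance formula and the full forbidden-subgraph case analysis are the two places where real work remains, and you have correctly identified them as such.
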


In the sequel, we call {\em $3$-leaf power trees} 
the DH-trees corresponding to (rooted) {\em $3$-leaf power graphs}.
Let  $\mathcal E$ be the combinatorial class
of $3$-leaf power trees.
To get a combinatorial decomposition of this class,
it is convenient to introduce the following subclasses:
\begin{itemize}
\item $\mESX$, $\mESC$ and $\mEK$ are the subclasses of $\mathcal E$,
where the root-node is required to have type $\SSX$, $\SSC$ and $\KK$ respectively;
\item $\mL$ is the class containing the tree restricted to a single leaf,
and trees consisting of a single internal node, of type $\KK$, with at least two pending leaves.
\end{itemize}
Recall also that $\mZ$ is a class with only one element, which is of size 1,
representing a leaf.
The following set of equations,
characterizing all these classes, is obtained easily:
\begin{equation}
\label{eq:Specif3Leaf}
\begin{cases}
\mL = \mZ + \Set_{\geq 2}(\mZ);\\
\mESX= \mL \times \Set_{\geq 1}(\mL+\mESX);\\
\mESC= \Set_{\geq 2}(\mL+\mESX);\\
\mEK =\mL + (\mESX + \mESC) \, \Set_{\ge 1}(\mZ); \\
\mathcal E= \mESX + \mESC + \mEK.
\end{cases}
\end{equation}
In terms of generating series (with the usual convention that $Y(z)$ is the exponential generating function of a class $\mathcal{Y}$), the first equation implies $L = e^z-1$.
The second equation yields:
\begin{equation}
\label{eq:Specif3Leaf_reduced}
  \ESX=  L \cdot \big( \exp(L+\ESX)-1 \big) = (e^z-1) \cdot \big(\exp(\ESX+e^z -1)-1 \big).
\end{equation}
We note that other equations of the system \cref{eq:Specif3Leaf} are nonrecursive
and simply express $\ESC$, $\EK$ and $E$ in terms of $\ESX$.
It is thus not surprising that most of the asymptotic analysis reduces to that of $\ESX$.
We first prove the following result.
\begin{proposition}
The series $\ESX$ is $\Delta$-analytic at $\rhotl=\log(1+e^{-1})$ and admits the following singular expansion around $\rhotl$:
\begin{align}
\ESX&=(1-e^{-1}) - \gamma_{E,3\ell}\sqrt{1-z/\rhotl}+ \mathcal{O}(1-z/\rhotl),\label{eq:ESX}
\end{align}
where
\begin{equation}
\gamma_{E,3\ell}=\sqrt{2(1+e)\log(1+e^{-1})}, \label{eq:gamma_E3l} 
\end{equation}
whose numerical estimate is $\gamma_{E,3\ell} \approx 1.5263$ (see Maple worksheet).
\end{proposition}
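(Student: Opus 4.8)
The plan is to analyze the single recursive equation \eqref{eq:Specif3Leaf_reduced}, namely $\ESX = (e^z-1)\big(\exp(\ESX+e^z-1)-1\big)$, via the smooth implicit-function (Newton-Puiseux) schema of \cite[Section VII.4]{Violet}. Write $\ESX = G(z,\ESX)$ with $G(z,w) = (e^z-1)\big(\exp(w+e^z-1)-1\big)$. Unlike the auxiliary series $F$ in \cref{prop:expansion_F}, here $G$ has a Taylor expansion with \emph{nonnegative} coefficients in $z$ and $w$ (since $e^z-1$ and $\exp(\cdot)-1$ both have nonnegative Taylor coefficients, and $w+e^z-1$ has nonnegative coefficients and no constant term), so the smooth implicit-function schema applies directly. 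First I would check the analytic conditions: $G$ is analytic at $(0,0)$, $G(0,0)=0$, $[w]G(z,w)$ is not identically $0$, and the characteristic system
\[
\tau = G(\rhotl,\tau), \qquad G_w(\rhotl,\tau) = 1
\]
has a positive solution inside the domain of analyticity; this guarantees a square-root singularity, $\Delta$-analyticity, and the singular expansion \eqref{eq:ESX} with $\gamma_{E,3\ell} = \sqrt{2\rhotl\, G_z(\rhotl,\tau)/G_{ww}(\rhotl,\tau)}$ by the Singular Implicit Functions Lemma \cite[Lemma VII.3]{Violet}.

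Next I would solve the characteristic system explicitly. Set $y = e^z - 1$ (so $G(z,w) = y(e^{w+y}-1)$) and $\tau = \ESX(\rhotl)$. Compute $G_w(z,w) = y\,e^{w+y} = G(z,w) + y$, so the condition $G_w(\rhotl,\tau)=1$ reads $\tau + y = 1$ at $z = \rhotl$, i.e. $e^{\rhotl} - 1 + \tau = 1$, hence $e^{\rhotl} = 2 - \tau$. Combining with $\tau = y(e^{\tau+y}-1) = (1-\tau)(e^{1}-1)$ (using $\tau + y = 1$) gives $\tau = (1-\tau)(e-1)$, whence $\tau = \frac{e-1}{e} = 1 - e^{-1}$ and then $e^{\rhotl} = 2 - (1-e^{-1}) = 1 + e^{-1}$, i.e. $\rhotl = \log(1+e^{-1})$, matching the statement. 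For the constant $\gamma_{E,3\ell}$: from $G_z(z,w) = e^z(e^{w+y}-1) + y e^z e^{w+y} = e^z\big(G(z,w)/y + G_w(z,w)\big)$, at the singular point this is $e^{\rhotl}\big(\tau/y + 1\big) = (1+e^{-1})\big(\tau/(e^{-1}) + 1\big) = (1+e^{-1})(e\tau + 1) = (1+e^{-1})\cdot e$ (using $e\tau = e - 1$, so $e\tau+1 = e$), hence $G_z(\rhotl,\tau) = e + 1$. Similarly $G_{ww}(z,w) = y e^{w+y} = G_w(z,w)$, so $G_{ww}(\rhotl,\tau) = 1$. Therefore
\[
\gamma_{E,3\ell} = \sqrt{\frac{2\rhotl\,G_z(\rhotl,\tau)}{G_{ww}(\rhotl,\tau)}} = \sqrt{2\log(1+e^{-1})\,(e+1)},
\]
which is exactly \eqref{eq:gamma_E3l}.

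The two points needing a little care — and the closest thing to an obstacle — are verifying the \emph{positivity/location} hypothesis of the schema (that the characteristic root $\tau = 1-e^{-1}$ lies strictly inside the disk of convergence in $w$ of $w \mapsto G(\rhotl,w)$, which is all of $\mathbb{C}$ here since $G$ is entire in $w$ once $z$ is fixed, so this is automatic) and the \emph{aperiodicity} condition ensuring the singularity at $\rhotl$ is the unique dominant one on $|z| = \rhotl$; the latter follows because $\ESX$ has nonnegative coefficients and $G(z,w)$ depends on $z$ only through $e^z-1$, which has positive coefficients for all $z^m$, $m\ge 1$, so the support of nonzero coefficients of $\ESX$ is not contained in any proper arithmetic progression. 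Beyond that, everything is a routine application of the theorem together with the elementary algebra above; since the paper says these computations are checked in the companion Maple worksheet, I would relegate the final simplifications there and state \eqref{eq:ESX}–\eqref{eq:gamma_E3l} as the outcome. The proof of $\Delta$-analyticity and the precise error term $\mathcal{O}(1-z/\rhotl)$ come for free from the smooth implicit-function schema.
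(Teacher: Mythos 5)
Your proposal is correct and takes essentially the same route as the paper: the paper also applies the smooth implicit-function schema \cite[Theorem VII.3]{Violet} to $\ESX = G(z,\ESX)$ with the same $G(z,w)=(e^z-1)(\exp(w+e^z-1)-1)$, noting that this $G$ has nonnegative coefficients (so the direct application works, unlike for $F$ in \cref{prop:expansion_F}). The only difference is that the paper states the characteristic system is ``easily solved'' and delegates the evaluation of $\gamma_{E,3\ell}$ to the companion Maple worksheet, while you carry out the algebra explicitly; your computations of $\tau=1-e^{-1}$, $\rhotl=\log(1+e^{-1})$, $G_z(\rhotl,\tau)=e+1$, and $G_{ww}(\rhotl,\tau)=1$ are all correct.
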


\begin{proof}
As in the proof of \cref{prop:expansion_F} we use the smooth implicit-function schema. We write
$\ESX=G(z,\ESX)$ where
$$
G(z,w)=(e^z-1)\left(\exp(w+e^z -1)-1 \right) 
$$
which is analytic in $z,w$ on the whole complex plane
 and which has nonnegative coefficients. 
 The characteristic system $\{G(r,s)=s; G_w(r,s)=1\}$
 is easily solved and the unique solution is
$$
r= \log(1+e^{-1}),\qquad s=1-e^{-1}.
$$
We apply  \cite[Theorem VII.3]{Violet} and obtain \cref{eq:ESX}. 
\end{proof}
With system \ref{eq:Specif3Leaf} and the singular expansion of $\ESX$ given above,
we find that of the other series. In particular,
\begin{equation}
\label{eq:DevE}
E=(e-e^{-1}-e^{-2}) - (e+1) \gamma_{E,3\ell} \sqrt{1-z/\rhotl} + \mathcal{O}(1-z/\rhotl),
\end{equation}
which will be useful later. 

\subsection{$3$-leaf power trees with a marked leaf}
We now consider families of $3$-leaf power trees with a marked leaf.
It turns out that the only classes relevant for the asymptotic analysis
are the classes $\mEXX$ and $\mEXp$ defined as follows:
 we let $\mEXX$ (resp.~$\mEpX$ or $\mEXp$) be the subclass of 
$3$-leaf power trees with a marked leaf such that the root has type $\SX$
and the marked leaf has cotype $\SX$
(resp.~with no constraints on the type of the root or on the cotype of the marked leaf).
As above, we consider the associated exponential bivariate generating series
 $\EXX$, $\EpX$ and $\EXp$, where the exponent of $z$ is the size of the tree
  (number of nonmarked nonroot leaves) and that of $u$ is the number of jumps on the 
  path from the root-leaf to the marked leaf.
  By symmetry we have $\EpX=\EXp$.

We also let $\mLp$ be the class of objects in $\mL$ with a marked leaf.
Its generating series is $\Lp=e^z$ (there is no jumps in such objects).
An easy combinatorial analysis yields the following equations:
\begin{align*}
\EXX&= u \cdot (1+\EXX) \cdot L \cdot \Set(L+\ESX);\\
\EXp&=\Lp \cdot \Set_{\geq 1}(L+\ESX)+ u \cdot (\Lp+\EXp)\cdot L \cdot \Set(L+\ESX).
\end{align*}
This is a 2x2 linear system of equations in the unknown series $\EXX$ and $\EXp$.
The solutions can be put under a form similar to \cref{eq:formeDab}:
\begin{align*}
\EXX(u) &= -1 +\frac1{1-u\, P}\\
\EXp(u) &= -e^z + \frac{e^z\, \exp(e^z-1+\ESX)}{1-u\, P},
\end{align*}
where 
$$P=P(z)=(e^z-1)\, \exp(e^z-1+\ESX).$$
Using \cref{eq:ESX}, we immediately see that $P$ has radius of convergence $\rho_{3\ell}$,
is $\Delta$-analytic and admits the following singular expansion for $z$ near $\rho_{3\ell}$:
\begin{equation}
\label{eq:DevP}
 P=1 - \gamma_{E,3\ell}\sqrt{1-z/\rhotl}+ \mathcal{O}(1-z/\rhotl).
 \end{equation}

\subsection{3-leaf power trees with $k$ marked leaves}
Let us fix a $k$-proper tree $t_0$.
We consider the following class of marked 3-leaf power trees.
\begin{definition}
\label{def:Et}
We let $\mEt$ be the labeled combinatorial class of 3-leaf power trees $T$
with $k$ marked leaves $\bm \ell$ such that:
\begin{enumerate}
\item the subtree of $T$ induced by $\bm \ell$ is $t_0$;
\item no two essential vertices of $T$ are neighbors of each other;
\item every internal essential vertex of $T$ has type and cotype $\SX$ and its children which are roots of subtrees containing marked leaves are also of type $\SX$.   
\end{enumerate}
\end{definition}
As above, we fix an enumeration $(e_0,\dots,e_{2k-2})$ of the edges of $t_0$ such that the edge adjacent to the root-leaf is labeled with $e_0$; here, we additionally require that the edges
incidents to leaves of $t_0$ get labels $e_1$, \dots, $e_k$.
Recall that we defined $\jump_e(T;\bm \ell)$ as the number of jumps on the path corresponding to $e$,
with the convention that essential vertices are not counted as jumps (but the root-node of $T$ can be a jump).
We consider the following multivariate generating series for $\mEt$: 
\[\Et(u_0,\dots,u_{2k-2})=  \sum_{(T;\bm \ell) \in \mEt} \frac{z^{|T|}}{|T|!} u_0^{\jump_{e_0}(T,\bm \ell)}
\cdots u_{2k-2}^{\jump_{e_{2k-2}}(T,\bm \ell)}.\]

Moreover, let $\I_{a}^{b\,c}$ be the set of 3-leaf power trees $T$ with two marked leaves such that
\begin{itemize}
\item the two marked leaves are children of the root-node;
\item if $T_1$ is a 3-leaf power of type $b$,
one can glue $T_1$ on the first 
marked leaf of $T$ (merging the marked leaf and the root-node of $T_1$) such that the tree obtained is a 3-leaf power tree; 
\item the same condition holds with gluing a 3-leaf power tree of type $c$ on the second marked leaf;
\item additionally, if $T_0$ is a 3-leaf power tree  with a marked leaf of cotype $a$,
one can glue $T$ on the marked leaf of $T_0$
obtaining a 3-leaf power tree.
\end{itemize}

\begin{lemma}
The generating function of $\I_{\SX}^{\SX\,\SX}$ is 
\begin{equation}
\label{eq:I}
I_{\SX}^{\SX\,\SX}(z) = L \exp(\ESX+L). 
\end{equation}
\end{lemma}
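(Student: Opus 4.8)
The plan is to pin down the shape of a tree $T\in\I_{\SX}^{\SX\,\SX}$, following the lines of the proof of \cref{lem:J} but now respecting the extra constraints that characterize $3$-leaf power trees, and then to read the generating function off the resulting combinatorial decomposition.

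First I would show that the root-node $v$ of such a $T$ must have type $\SX$. It cannot be of type $\SC$: a marked leaf of cotype $\SX$ may have a parent of type $\SC$, so gluing $T$ below such a leaf would turn $v$ into a child of an $\SC$-node, which is forbidden --- this is precisely the restriction $a\neq\SX$ that already shows up for $\SC$-rooted trees in \cref{lem:J}. It cannot be of type $\KK$ either: since one must be able to glue $3$-leaf power trees of type $\SX$ onto \emph{both} marked leaves and still obtain a $3$-leaf power tree, $v$ would become a clique-node with (at least) two star-node children, so the star nodes of the resulting tree would fail to form a connected subtree, contradicting the characterization of $3$-leaf power trees recalled above. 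Hence $v$ has type $\SX$.

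Next, with $v$ of type $\SX$, I would invoke the structure encoded in $\mESX=\mL\times\Set_{\ge 1}(\mL+\mESX)$. The distinguished child of $v$ (the one attached to the center of $\Gamma_v$) is a tree of $\mL$, and it carries no marked leaf: a marked leaf of $T$ must accept the gluing of an $\SX$-tree, hence has cotype $\neq\SC$, i.e.\ is not the distinguished child of $v$. Therefore the two marked leaves lie among the children of $v$ attached to extremities of $\Gamma_v$, while the remaining such children form an arbitrary (possibly empty) set of trees of $\mL+\mESX$ --- in a $3$-leaf power tree a non-distinguished child of an $\SX$-node lies in $\mL+\mESX$, and the $\Set_{\ge 1}$ constraint is automatically met thanks to the two marked leaves. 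Conversely, one checks, exactly as in the $\SX$ case of \cref{lem:J}, that all the gluing conditions defining $\I_{\SX}^{\SX\,\SX}$ hold for any such configuration, so this description is also sufficient.

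Finally I would translate into generating functions: since the root-leaf and the two marked leaves are not counted in the size, the distinguished child contributes a factor $L$ and the set of the other children of $v$ contributes a factor $\exp(L+\ESX)$, whence $I_{\SX}^{\SX\,\SX}(z)=L\exp(\ESX+L)$ (one may note in passing that this equals $\ESX+L$ by \cref{eq:Specif3Leaf_reduced}). The main obstacle is the case analysis on the type of $v$, and in particular ruling out the clique case, which is the one place where the $3$-leaf-power-specific connectivity condition on star nodes is genuinely needed; everything else is routine type/cotype bookkeeping.
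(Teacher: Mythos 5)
Your proof is correct and follows essentially the same approach as the paper's, which merely asserts (appealing to ``allowed adjacencies'') that the root-node of a tree in $\I_{\SX}^{\SX\,\SX}$ must have type $\SX$ and then reads off the generating function; you supply the explicit case analysis ruling out types $\SC$ and $\KK$, and the bookkeeping that the marked leaves lie among the non-distinguished extremities and the distinguished child belongs to $\mL$. One small point worth being aware of: to rule out $\KK$ you glue $\SX$-trees onto \emph{both} marked leaves simultaneously (so that the clique root would acquire two star children, disconnecting the star subtree), whereas the definition of $\I_a^{b\,c}$ lists the gluing conditions one at a time; for DH-trees these are equivalent since the constraints are purely local, but for $3$-leaf power trees the connectivity constraint on star nodes is not local, so the simultaneous reading -- which is the one implicit in the use of $\I_{\SX}^{\SX\,\SX}$ inside the decomposition of \cref{prop:decompo_Et0} -- is indeed the correct one and is what your argument uses.
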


\begin{proof}
Because of the allowed adjacencies between nodes of various types in 3-leaf power trees, a 3-leaf power tree in $\I_{\SX}^{\SX\,\SX}$ necessarily has a root-node $r$ of type $\SX$. The factor $L$ then accounts for the tree pending under the center of the star labeling $r$, and $\exp(\ESX+L)$ accounts for the trees pending under its extremities which do not correspond to the marked leaves.
\end{proof}

\begin{proposition}
\label{prop:decompo_Et0}
We have 
\begin{equation}
\label{eq:decompo_Et0}
\Et(u_0,\dots,u_{2k-2})=  \left( \prod_{i=0}^k \EXp(u_i) \right)  
\cdot \left( \prod_{i=k+1}^{2k-2} \EXX(u_i) \right)  
\cdot \left(\I_{\SX}^{\SX\,\SX} \right)^{k-1}. 
\end{equation}
\end{proposition}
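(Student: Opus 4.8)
The plan is to mimic the proof of \cref{prop:decompo_Dt0}, but to take advantage of the simplification in \cref{def:Et}, item iii), which forces all internal essential vertices (and the children of the branching vertices leading to marked leaves) to be of type/cotype $\SX$. First I would set up a size-preserving bijection between $\mEt$ and the product class
\[
\left(\prod_{i=0}^k \mEXp\right) \times \left(\prod_{i=k+1}^{2k-2}\mEXX\right) \times \left(\I_{\SX}^{\SX\,\SX}\right)^{k-1},
\]
where the first $k+1$ factors correspond to the edges $e_0,\dots,e_k$ of $t_0$ (recall that $e_0$ is adjacent to the root-leaf and $e_1,\dots,e_k$ are the leaf-edges), the next $k-2$ factors correspond to the internal edges $e_{k+1},\dots,e_{2k-2}$, and the last $k-1$ factors correspond to the $k-1$ internal vertices of $t_0$. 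The key point is that, unlike in the general DH case, there is no sum over types $\mathbf{(tp,ct)}$: conditions ii) and iii) of \cref{def:Et} pin down every relevant type and cotype to $\SX$, so each edge-piece lives in a single class ($\mEXp$ for the root-edge and the leaf-edges, $\mEXX$ for internal edges, since both endpoints of an internal edge are internal essential vertices of cotype/type $\SX$), and each branching-vertex-piece lives in $\I_{\SX}^{\SX\,\SX}$.

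The bijection itself is constructed exactly as in \cref{prop:decompo_Dt0}. Given $(T,\bm\ell)\in\mEt$, for each internal vertex $w_i$ of $t_0$ I cut the parent edge of the corresponding essential vertex $\bar w_i$ together with the two edges of $\bar w_i$ that start the paths towards marked leaves; each cut edge is replaced by a marked leaf on the root side and a root-leaf on the leaf side. This decomposes $T$ into $2k-1$ edge-pieces and $k-1$ branching-pieces. By item iii), the branching-piece around $\bar w_i$ has a root-node of type $\SX$, its marked leaves have cotype $\SX$, and the marked leaf of $T_0$ it would be glued to has cotype $\SX$, so it belongs to $\I_{\SX}^{\SX\,\SX}$; an edge-piece corresponding to an internal edge $e_i$ has both its root-leaf and its marked leaf adjacent to (cotype-$\SX$ children of, resp.\ cotype-$\SX$) essential vertices, hence lies in $\mEXX$; the pieces for $e_0$ and the leaf-edges $e_1,\dots,e_k$ have no constraint on one side, hence lie in $\mEXp$. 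Conversely, gluing root-leaves to marked leaves along the $k$-proper tree $t_0$ reassembles a unique tree; one checks (using the definition of $\I_{\SX}^{\SX\,\SX}$ and that the $3$-leaf power property is local to adjacent nodes) that the result is in $\mEt$, with the induced subtree equal to $t_0$ and item ii) automatic since each $\mEXX$, $\mEXp$ object has at least one internal node. Size-preservation holds because root-leaves and marked leaves are never counted in the size, and exactly these leaves disappear in the gluings.

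Finally, the statistic tracking: a jump occurring in the path corresponding to edge $e_i$ of $t_0$ is either a jump strictly inside the edge-piece for $e_i$ (recorded by its $u$-variable) or a jump at an essential vertex, which by the convention in the definition of $\jump_e$ is \emph{not} counted, except — the root-node of $T$ may be a jump, and that is recorded inside the $\mEXp$-factor for $e_0$; the branching-pieces in $\I_{\SX}^{\SX\,\SX}$ contribute no jumps along the relevant paths because by construction their internal paths between the marked leaves and the root-leaf go through $\SX$-adjacencies. Hence $\jump_{e_i}(T,\bm\ell)$ equals the jump-count carried by the corresponding $\mEXp$ or $\mEXX$ factor, and multiplying generating series over the pieces yields exactly \cref{eq:decompo_Et0}. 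The main obstacle I anticipate is the bookkeeping in the converse direction — verifying that an arbitrary tuple of pieces glues back to an object genuinely in $\mEt$ (in particular that the global tree is a $3$-leaf power tree and that no extra adjacency constraint is violated at the gluing sites), and that the jumps at the gluing vertices are correctly excluded; but this is routine given \cref{def:Et} and the locality of the $3$-leaf power characterization.
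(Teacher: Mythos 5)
Your proof is correct and follows essentially the same route as the paper's sketch: you apply the decomposition of \cref{prop:decompo_Dt0}, observe that item iii) of \cref{def:Et} collapses the sum over $(tp_i,ct_i)$ to a single term, and track the jump statistics edge by edge. One small imprecision: the piece corresponding to $e_0$ belongs to $\mEpX$ (unconstrained root type, cotype $\SX$), not to $\mEXp$ (root type $\SX$, unconstrained cotype) as you claim — the paper writes $\EpX(u_0)\prod_{i=1}^k\EXp(u_i)\cdots$ and then explicitly invokes the symmetry $\EpX=\EXp$ to reach \cref{eq:decompo_Et0}; your argument should invoke that symmetry (or the re-rooting bijection behind it) at the same point.
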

\begin{proof}[Sketch of proof]
We use the same decomposition as in the proof of \cref{prop:decompo_Dt0}.
The main difference is that, because of item iii) in \cref{def:Et} above,
all types $tp_i$ and cotypes $ct_i$ which do not correspond to the marked leaves 
or the root-leaf must be equal to $\SX$.
Consequently the tuple $(tp_i, ct_i)_{0\leq i \leq 2k-2}$ can only take one possible value
and we get
\[ \Et(u_0,\dots,u_{2k-2})=\EpX(u_0) \left( \prod_{i=1}^k \EXp(u_i) \right)  
\cdot \left( \prod_{i=k+1}^{2k-2} \EXX(u_i) \right)  
\cdot \left(\I_{\SX}^{\SX\,\SX} \right)^{k-1}.\]
We conclude using the symmetry $\EXp=\EpX$.
\end{proof}

\begin{proposition}
\label{prop:theoreme_local_limite_rTn_3L}
Let $(\bm T_n, \bm \ell)$ be a uniform random 3-leaf power tree of size $n$ with $k$ marked leaves (not counted in the size).
Fix a $k$-proper tree $t_0$ and real numbers $x_0, \dots, x_{2k-2}>0$.
We set $a_i=\lfloor x_i \sqrt n \rfloor$. Then

\begin{equation} \label{eq:ConvLocale_3L}
\mathbb P\Big[ r(\bm T_n, \bm \ell) =(t_0,a_0,\dots,a_{2k-2}) \wedge (\bm T_n, \bm \ell) \in \mathcal E_{t_0} \Big] 
\sim \frac{\gamma_{E,3\ell}^{2k}}{2^{k} \sqrt{n}^{2k-1}} s \exp\left(\frac{-\gamma_{E,3\ell}^2 s^2}{4}\right),
 \end{equation}
where $s=\sum_i x_i$.
Moreover, this estimate is uniform for $x_0$, \dots, $x_{2k-2}$ in any compact subset
of $(0;+\infty)^{2k-1}$.
\end{proposition}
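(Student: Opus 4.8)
The plan is to mirror the proof of \cref{prop:theoreme_local_limite_rTn}, using \cref{prop:decompo_Et0} in place of \cref{prop:decompo_Dt0} and the $3$-leaf power series instead of the DH series. First I would observe that, for $n$ large enough, the event $\{r(\bm T_n,\bm\ell)=(t_0,a_0,\dots,a_{2k-2})\}\cap\{(\bm T_n,\bm\ell)\in\mEt\}$ coincides with the event that $(\bm T_n,\bm\ell)$ lies in $\mEt$ and has jump vector $(a_0,\dots,a_{2k-2})$: the first coordinate of $r$ being $t_0$ is item i) of \cref{def:Et}; item ii) is automatic once $a_i\ge1$ for every $i$, which holds for large $n$ since each $x_i>0$; and item iii) is exactly the extra constraint carried by the event $\{(\bm T_n,\bm\ell)\in\mEt\}$. (This extra event is the only structural novelty compared with the DH case, where item iii) has no analogue, which is why it appears in the statement.) Hence, writing $p(n)$ for the probability in \cref{eq:ConvLocale_3L} and recalling that the exponential generating series of $3$-leaf power trees with $k$ marked leaves is $E^{(k)}(z)$,
\[p(n)=\frac{[z^n u_0^{a_0}\cdots u_{2k-2}^{a_{2k-2}}]\,\Et(u_0,\dots,u_{2k-2})}{[z^n]\,E^{(k)}(z)}.\]

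For the denominator, \cref{eq:DevE} gives the square-root singular expansion of $E$ at $\rhotl$, whose square-root coefficient is $(e+1)\gamma_{E,3\ell}$; singular differentiation followed by the transfer theorem then yields $[z^n]E^{(k)}(z)\sim\frac{(e+1)\gamma_{E,3\ell}}{2\sqrt\pi}\rhotl^{-k-n}n^{k-3/2}$, exactly as for \cref{eq:coef_Dk}. For the numerator I would expand \cref{eq:decompo_Et0} using the explicit forms $\EXp(u)=-e^z+\frac{e^z\exp(e^z-1+\ESX)}{1-u\,P}$ and $\EXX(u)=-1+\frac{1}{1-u\,P}$ and extract each $u_i^{a_i}$: since $a_i\ge1$, only the geometric part of each factor contributes, producing $P(z)^{a_i}$ together with a $u$-free prefactor. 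This gives
\[[z^n u_0^{a_0}\cdots u_{2k-2}^{a_{2k-2}}]\Et=[z^n]\,\widetilde M(z)\,P(z)^{a_0+\cdots+a_{2k-2}},\qquad \widetilde M(z)=\big(e^z\exp(e^z-1+\ESX)\big)^{k+1}\big(I_{\SX}^{\SX\,\SX}(z)\big)^{k-1},\]
using \cref{eq:I}; the $k+1$ factors with a nontrivial prefactor are those indexed by the root-leaf edge and the $k$ leaf-edges of $t_0$, the $k-2$ internal-edge factors $\EXX$ each contributing a prefactor $1$, and the $k-1$ junction pieces $\I_{\SX}^{\SX\,\SX}$ being $u$-free.

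Since $P(\rhotl)=1$ with a negative square-root coefficient by \cref{eq:DevP}, the Semi-large powers Theorem (\cref{Th:SemiLarge}), applied with $s=\sum_i x_i$, gives $[z^n]\widetilde M(z)P(z)^{a_0+\cdots+a_{2k-2}}\sim\frac{s\gamma_{E,3\ell}}{2}\exp(-\gamma_{E,3\ell}^2 s^2/4)\,\widetilde M(\rhotl)\,(n\rhotl^{n}\sqrt\pi)^{-1}$, uniformly for $s$ in compact subsets of $(0,+\infty)$. Dividing by the denominator estimate, the powers of $n$ collapse to $\sqrt n^{-(2k-1)}$ and the factor $s\exp(-\gamma_{E,3\ell}^2 s^2/4)$ survives, so it only remains to check the constant identity $(e+1)^k\rhotl^{k}=\gamma_{E,3\ell}^{2k}/2^{k}$ together with $\widetilde M(\rhotl)=(e+1)^{k+1}$. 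Both are immediate: $e^{\rhotl}=1+e^{-1}$ and $\ESX(\rhotl)=1-e^{-1}$ (by \cref{eq:ESX}) give $e^{\rhotl}-1+\ESX(\rhotl)=1$, hence $e^{\rhotl}\exp(e^{\rhotl}-1+\ESX(\rhotl))=(1+e^{-1})e=e+1$ and $I_{\SX}^{\SX\,\SX}(\rhotl)=e^{-1}\cdot e=1$, so $\widetilde M(\rhotl)=(e+1)^{k+1}$; and $(e+1)\rhotl=(e+1)\log(1+e^{-1})=\gamma_{E,3\ell}^2/2$ by \cref{eq:gamma_E3l}. Uniformity of the estimate \eqref{eq:ConvLocale_3L} follows from that built into \cref{Th:SemiLarge}. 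The step requiring the most care is not any single computation but the bookkeeping of the numerator — keeping straight which of the $2k-1$ edges of $t_0$ carries which type of factor and correctly evaluating $\widetilde M$ at $\rhotl$ — together with the conceptual point that, unlike in the DH case, the event $\{(\bm T_n,\bm\ell)\in\mEt\}$ genuinely cannot be dropped: for fixed $n$, first common ancestors of type or cotype different from $\SX$ do occur with positive probability, and it is only after summing over jump vectors, in the corollary that follows, that these configurations turn out to be asymptotically negligible; the rest is a routine transcription of the argument of \cref{Sec:TreesWithInducesSubtrees}.
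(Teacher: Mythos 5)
Your proposal is correct and follows the paper's proof essentially step for step: the ratio-of-coefficients setup $p(n)=[z^n u_0^{a_0}\cdots u_{2k-2}^{a_{2k-2}}]\Et / [z^n]E^{(k)}(z)$, the denominator estimate from \cref{eq:DevE} via singular differentiation and transfer, the reduction of the numerator to $[z^n]\widetilde M(z)P(z)^{a_0+\cdots+a_{2k-2}}$ where your $\widetilde M$ coincides with the paper's $N$ (since $I_{\SX}^{\SX\,\SX}=(e^z-1)\exp(e^z-1+\ESX)$), the Semi-large powers Theorem, and the closing constant verifications $\widetilde M(\rhotl)=(e+1)^{k+1}$ and $\gamma_{E,3\ell}^2=2(1+e)\rhotl$. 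Your remark explaining why the event $\{(\bm T_n,\bm\ell)\in\mEt\}$ cannot be dropped in the $3$-leaf power case, unlike in the DH case, is a correct clarification that the paper leaves implicit.
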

\begin{proof}
Writing $p(n):=\mathbb P\big[ r(\bm T_n, \bm \ell) =(t_0,a_0,\dots,a_{2k-2}) \wedge (\bm T_n, \bm \ell) \in \mathcal E_{t_0} \big]$,
we have
\begin{equation}
\label{eq:pE}
p(n)= \frac{[z^n u_0^{a_0} \dots u_{2k-2}^{a_{2k-2}}] \Et(z,u_0,\dots,u_{2k-2})} { [z^n] E^{(k)}(z) }.\end{equation}
We first analyze the denominator.
From \cref{eq:DevE}, routine computations yield:
\begin{align}
 [z^n] E^{(k)}(z)=  \frac{(1+e) \, \gamma_{E,3\ell}}{2 \sqrt \pi} \rhotl^{-k-n} n^{k-3/2}. \label{eq:coef_Ek}
\end{align}

With the same reasoning as in \cref{prop:theoreme_local_limite_rTn},
we have
\begin{equation}
\label{eq:coeff_Et}
[z^n u_0^{a_0} \dots u_{2k-2}^{a^{2k-2}}] \Et(z,u_0,\dots,u_{2k-2})
= 
 [z^n] \, N(z) P(z)^{a_0+\dots+a_{2k-2}},
 \end{equation}
 where 
 \begin{equation}
N(z)=\Big(e^z \, \exp\big(e^z-1+\ESX\big)\Big)^{k+1} \cdot 
\Big( (e^z-1) \cdot \exp\big(e^z-1+\ESX\big)\Big)^{k-1}.
 \end{equation}
Applying the Semi-large powers Theorem (\cref{Th:SemiLarge})
and using \cref{eq:DevP},
we have 
\[[z^n] \, N(z) P(z)^{a_0+\dots+a_{2k-2}}\sim
\frac{s\gamma_{E,3\ell}}2 \exp\left(\frac{-s^2\, \gamma_{E,3\ell}^2}4 \right) \frac{1}{n\rhotl^{n}\sqrt{\pi}}
 N(\rhotl).\]
And since $N(\rhotl)=(e+1)^{k+1}$, it follows that
\[p(n) \sim \frac
{\frac{s\gamma_{E,3\ell}}2 \exp\left(\frac{-s^2\, \gamma_{E,3\ell}^2}4 \right) \frac{1}{n\rhotl^{n}\sqrt{\pi}}
\, (e+1)^{k+1}}
{\frac{(1+e) \gamma_{E,3\ell}}{2 \sqrt \pi} \rhotl^{-k-n} n^{k-3/2}}
= (e+1)^k \rhotl^k \, n^{-k+1/2} \, s \exp\left(\frac{-s^2\, \gamma_{E,3\ell}^2}4 \right).
  \]
  This concludes the proof since $\gamma_{E,3\ell}^2=2(1+e)\rhotl$.
  \end{proof}

From here, the convergence to the Brownian CRT in Gromov--Prohorov topology, {\it i.e.}~the third case of \cref{th:GrosTheoreme}, follows
using the same arguments as in the case of unconstrained DH graphs.

\appendix
\section{The Semi-large powers Theorem}\label{Sec:AppendixSemiLarge}

In order to prove \cref{prop:theoreme_local_limite_rTn,prop:theoreme_local_limite_rTn_2c,prop:theoreme_local_limite_rTn_3L}, we need to estimate quantities of the form 
$
[z^n] M(z) H(z)^{p}
$
where $p$ is of order $\sqrt{n}$.
The following statement is essentially the Semi-large powers Theorem (\cite[Theorem IX.16]{Violet} for $\lambda=1/2$, see also \cite{SemiLargePowers} for the original reference)
 which deals with the case $M(z)= 1$. As we will see, there are no particular difficulties in generalizing the proof.

\begin{theorem}
\label{Th:SemiLarge}
Let $\rho >0$ and let $M,H$ be $\Delta$-analytic functions at $\rho$.
Assume that
\begin{enumerate}
\item $H$ has a square-root singularity at $\rho$: for some $\sigma,h>0$
$$
H(z) = \sigma - h\sqrt{1 - z/\rho} + \mathcal{O}(1-z/\rho).
$$
\item $M$ converges at $\rho$.
\end{enumerate}
Let us fix a compact subset $K$ of $(0,+\infty)$ and a constant $C>0$.
Take $(a_n)$ a sequence of real numbers such that 
 \begin{equation}\label{eq:a_n}
|a_n- x\sqrt{n}|\leq C,
\end{equation}
for some $x$ in $K$.
Then we have
$$
[z^n] M(z)H(z)^{a_n}
\stackrel{n\to +\infty}{\sim}
{\sf Ray}\left(\frac{xh}{\sigma} \right) \frac{1}{n\rho^{n}\sqrt{\pi}}
 M(\rho) \sigma^{a_n},
$$
where ${\sf Ray}(x)=\tfrac{x}{2}e^{-x^2/4}$ is the Rayleigh density.

The error term in the above convergence is uniform for all $x\in K$,
and all sequences $(a_n)$ satisfying \eqref{eq:a_n}, but depends on $M$, $H$, $K$ and $C$.
\end{theorem}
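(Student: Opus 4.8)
The strategy is to follow the proof of the classical Semi-large powers Theorem \cite[Theorem IX.16]{Violet} via a saddle-point / Cauchy-coefficient analysis, inserting the extra factor $M(z)$ and checking that it contributes only its value $M(\rho)$ at the singularity. First I would write
\[
[z^n] M(z) H(z)^{a_n} = \frac{1}{2i\pi} \oint M(z) H(z)^{a_n} \frac{dz}{z^{n+1}},
\]
and deform the contour to a standard Hankel-type contour $\gamma$ around $\rho$, legitimate because $M$ and $H$ are both $\Delta$-analytic at $\rho$. The key parametrization is $z = \rho(1 + t/n)$ with $t$ ranging over the usual Hankel contour; since $a_n \sim x\sqrt{n}$, on this contour
\[
H(z)^{a_n} = \bigl(\sigma - h\sqrt{-t/n} + O(1/n)\bigr)^{a_n}
= \sigma^{a_n}\exp\!\Bigl(a_n \log\bigl(1 - \tfrac{h}{\sigma}\sqrt{-t/n} + O(1/n)\bigr)\Bigr),
\]
and the exponent behaves like $-\tfrac{xh}{\sigma}\sqrt{-t} + O(n^{-1/2})$, producing in the limit the integrand $\exp(-\tfrac{xh}{\sigma}\sqrt{-t})$. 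Meanwhile $z^{-n-1} \sim \rho^{-n-1} e^{-t}$ and $M(z) \to M(\rho)$ uniformly on the relevant part of the contour.

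Next I would assemble these pieces: after the change of variables, $dz = (\rho/n)\,dt$, so the contribution is
\[
[z^n] M(z) H(z)^{a_n} \sim \frac{M(\rho)\,\sigma^{a_n}}{\rho^n\, n}\cdot \frac{1}{2i\pi}\int_{\mathcal H} e^{-t}\, e^{-\frac{xh}{\sigma}\sqrt{-t}}\, dt,
\]
where $\mathcal H$ is the Hankel contour. The remaining Hankel integral is a classical one: by the standard identity (used already in the $M=1$ case in \cite{Violet}) it evaluates to $\tfrac{1}{\sqrt{\pi}}\,\mathsf{Ray}(xh/\sigma)$, i.e. $\tfrac{1}{\sqrt\pi}\cdot \tfrac{xh}{2\sigma} e^{-(xh/\sigma)^2/4}$. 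This yields exactly the claimed asymptotic equivalent. Since $M(\rho)$ factors out cleanly, no new analytic phenomenon appears compared with the known case.

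\textbf{Uniformity and the main obstacle.} The real work — and the only part that needs genuine care — is the uniformity in $x \in K$ and in the sequence $(a_n)$ satisfying $|a_n - x\sqrt n|\le C$. For this I would split the Hankel contour into a central arc near $\rho$ (of size $O(\log^2 n / n)$ along the contour, say) and two escaping rays; on the rays one bounds $|H(z)^{a_n}|$ using that $|H(z)| < \sigma$ there together with $a_n \gtrsim \sqrt n$, getting a bound that decays faster than any power of $n$ uniformly for $x$ in the compact $K$ (here the lower bound $x \ge \min K > 0$ is essential, which is why $K$ must avoid $0$). On the central arc, the error terms in $H(z) = \sigma - h\sqrt{1-z/\rho} + O(1-z/\rho)$ and in $M(z) = M(\rho) + O(1-z/\rho)$, multiplied by $a_n$ inside the exponential, are $O(n^{-1/2}\log^2 n)$ uniformly, and dominated convergence (with an integrable majorant independent of $n$ and $x$) lets one pass to the limit uniformly. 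The constant $C$ in the hypothesis $|a_n - x\sqrt n| \le C$ only shifts $\sigma^{a_n}$ by a bounded factor, so it is harmless; I would simply track that $\sigma^{a_n - x\sqrt n}$ stays in a compact subset of $(0,\infty)$. The whole argument is a routine but careful adaptation of \cite[proof of Theorem IX.16]{Violet}, so I would present it by indicating the deformation, the change of variables, the tail estimates, and the evaluation of the limiting Hankel integral, emphasizing at each step where the factor $M(z)$ enters (always benignly, via $M(z) = M(\rho)(1 + o(1))$ on the central arc and via boundedness on the tails).
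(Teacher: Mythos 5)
Your proposal takes essentially the same approach as the paper: both mimic the proof of \cite[Theorem IX.16]{Violet} by writing the Cauchy coefficient integral, deforming to a Hankel contour around $\rho$ (legitimate by $\Delta$-analyticity), substituting $t \propto n(1-z/\rho)$, splitting into a central piece where $H(z)^{a_n}\to\sigma^{a_n}e^{-xh\sqrt{\mp t}/\sigma}$ and a tail piece that is exponentially negligible, and finally evaluating the limiting Hankel integral to the Rayleigh factor (the paper does this by expanding the exponential and using Hankel's formula for $\Gamma$ plus the reflection formula). The only cosmetic difference is your sign convention $z=\rho(1+t/n)$ versus the paper's $z=\rho(1-t/n)$, and your tail estimate invokes $|H(z)|<\sigma$ on the rays rather than the paper's more robust argument that the factor $|1-t/n|^{-n-1}$ alone already makes the region $\mathrm{Re}(t)<-n^{3/5}$ negligible; the latter avoids having to control $|H|$ far from $\rho$ and is what you would need to make the escape-to-the-big-circle part rigorous.
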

\begin{proof}[Proof of \cref{Th:SemiLarge}]

We mimic the proof of \cite[Theorem IX.16]{Violet}.
By assumption there exists $R_1 > \rho$ and $\theta >0$ such that $M,H$ are analytic on 
\[\{ z : |z| <R_1 \text{ et }|\Arg(z-\rho)| > \theta \}.\]
 Fix $R$ in $(\rho,R_1)$ and write
\[
[z^n]  M(z) H(z)^{a_n}= \frac{1}{2i\pi} \int_{\gamma}  M(z) H(z)^{a_n} \frac{dz}{z^{n+1}},
\]
where $\gamma=\gamma_0\cup \overline{\gamma_0} \cup \gamma_1\cup\gamma_2$ is a closed counter-clockwise contour surrounding $0$ consisting of the following pieces (see \cref{Fig:HankelContour}):
\begin{itemize}
\item $\gamma_0$ is a line segment starting at $\rho+i/n$, with a slope $\theta$
and stopping when it reaches the circle $\{z : |z| = R\}$;
\item $\overline{\gamma_0}$ is its complex conjugate (in reverse direction);
\item $\gamma_1$ is a semi-circle centered at $\rho$ of radius $1/n$ from $\rho - i\rho/n$ to $\rho +i\rho/n$;
\item $\gamma_2$ is an arc of circle of radius $R$ closing $\gamma$.
\end{itemize}
The modulus of the integral on $\gamma_2$ is easily bounded by $\O(B^{\sqrt n} R^{-n})$ for some constant $B$.
On the remainder of the contour, we set $z=\rho(1-t/n)$, \emph{i.e.}~$t=n(1-z/\rho)$ and  get
$$
[z^n]  M(z) H(z)^{a_n}=\frac{1}{2i\pi} \int_{g}  \frac{-\rho dt}{n \rho^{n+1} (1-t/n)^{n+1}} M(\rho(1-\tfrac{t}{n})) H(\rho (1-\tfrac{t}{n}) )^{a_n},
$$
where $g$ is the image of $\gamma_0 \cup \overline{\gamma_0}\cup\gamma_1$ by the change of variable.
\begin{figure}[htbp]
\begin{center}
\includegraphics[width=12cm]{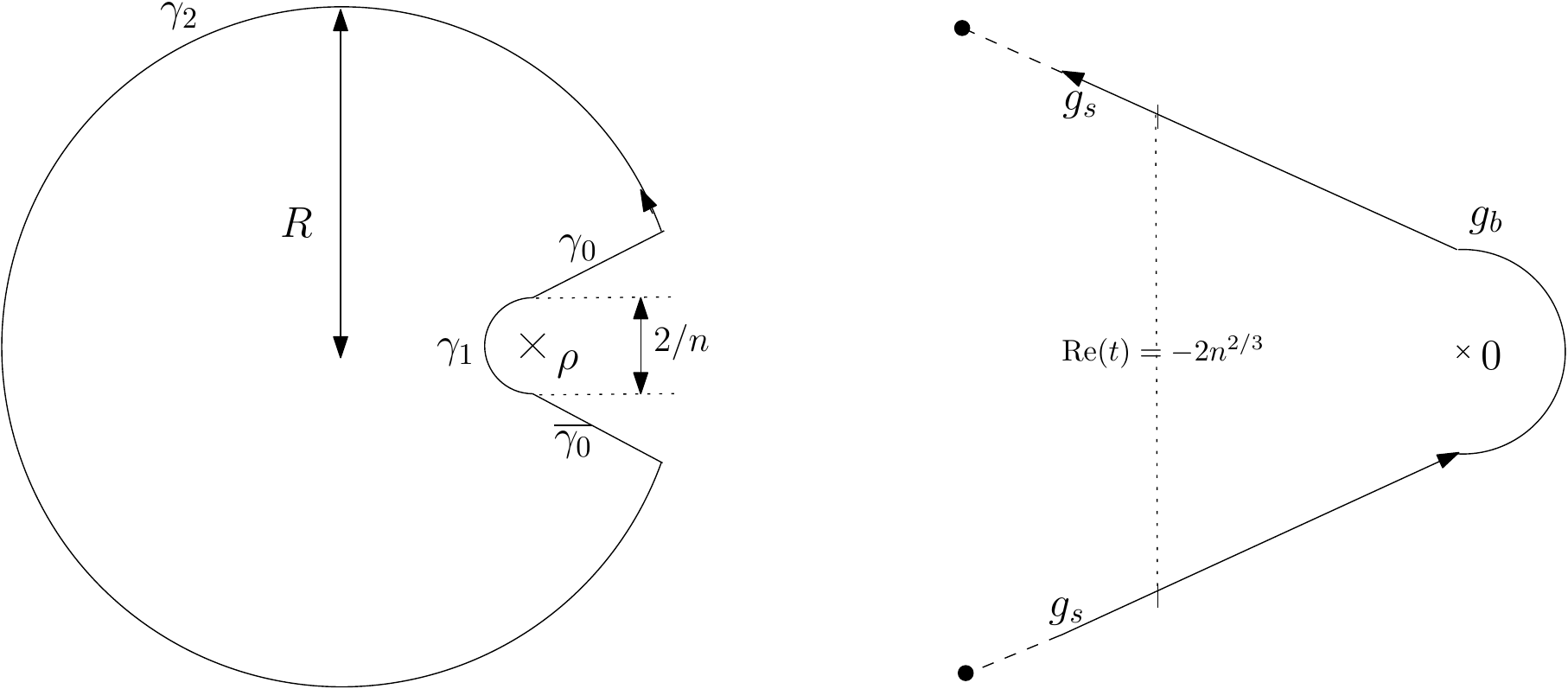}
\end{center}
\caption{Left: The contour $\gamma$. Right: The contour $g$, which is the image of $\gamma_0 \cup \overline{\gamma_0} \cup \gamma_1$ in the variable $t$,
where $t=n(1-z/\rho)$.}
\label{Fig:HankelContour}
\end{figure}

We write $g=g_s\cup g_b$ (for small and big), where $g_s=\{t\in g ;\ \mathrm{Re}(t)<-n^{3/5}\}$ (this set is not connected) and 
$g_b=\{t\in g ;\ \mathrm{Re}(t)\geq-n^{3/5}\}$; see again \cref{Fig:HankelContour}.

On $g_s$, we have 
\[ |1-\tfrac{t}n|^{n+1} \ge (1+n^{-2/5})^{n+1} = \Theta(e^{n^{3/5}}). \]

Since $H$ and $M$ are bounded on the integration path, for $a_n$ as in \eqref{eq:a_n} , we have
\[\int_{g_s} M(\rho(1-\tfrac{t}{n})) H(\rho (1-\tfrac{t}{n}) )^{a_n} \frac{-\rho dt}{n \rho^{n+1} (1-t/n)^{n+1}}
= \mathcal O\big(\rho^{-n} e^{B'\sqrt n-n^{3/5}}\big), \]
for some constant $B'$.
For  $t$ in $g_b$, a simple computation gives
$$M(\rho(1-\tfrac{t}{n})) H(\rho (1-\tfrac{t}{n}) )^{a_n}
 \sim M(\rho)\sigma^{a_n} \exp\left( -\tfrac{xh\sqrt{t}}\sigma \right).
$$
Besides $(1-\tfrac{t}{n})^{-n-1}=\exp(t+\mathcal{O}(\tfrac{t^2}{n}))=\exp(t +o(t))$. We obtain
$$
\frac{1}{2i\pi} \int_{g_b} M(\rho(1-\tfrac{t}{n})) H(\rho (1-\tfrac{t}{n}) )^{a_n} \frac{-\rho dt}{n \rho^{n+1} (1-t/n)^{n+1}}
=
-M(\rho)\sigma^{a_n}\frac{\rho^{-n}}{2i\pi n} \int_{g_b} e^{A(n,t) } dt,
$$
where
$$
e^{A(n,t)}=e^t e^{\tfrac{-h x \sqrt{t}}\sigma} e^{\mathcal{O}(\tfrac{t^2}{n})}.
$$
The big-$\mathcal{O}$ term above is uniform for $x$ in any compact subinterval of $(0,+\infty)$.
Expanding  $e^{-hx\sqrt{t}/\sigma} $,  putting $u=-t$, and using Hankel's formula for the Gamma function \cite[Eq.(13) p. 745]{Violet} yields (writing $MH^{a}$ for $M(\rho(1-\tfrac{t}{n})) H(\rho (1-\tfrac{t}{n}) )^{a_n}$)
\begin{align*}
\frac{1}{2i\pi} \int_{g_b} MH^{a}\frac{dz}{z^{n+1}}
&\sim
-M(\rho)\sigma^{a_n}\frac{\rho^{-n}}{2i\pi n} \sum_{\ell\geq 0} \frac{(-xh/\sigma)^\ell }{ \ell!}   \int_{-g_b}  e^{-u}(-u)^{\ell/2} (-du)\\
&\sim
-M(\rho)\sigma^{a_n}\frac{\rho^{-n}}{n} 
\sum_{\ell\geq 0} \frac{(-xh/\sigma)^\ell }{ \ell!}  \times \left(\frac{1}{\Gamma(-\ell/2)}\right).
\end{align*}
Now we use the complement formula $\Gamma(s)\Gamma(-s)=-\tfrac{\pi}{s\sin(\pi s)}$ to rewrite the RHS:
$$
\frac{1}{ \Gamma(-\ell/2)}
=\frac{-\sin(\pi \ell/2) \Gamma(\ell/2+1)}{\pi}.
$$
The sine factor vanishes for even $\ell$ so we are left with
\begin{align*}
\frac{1}{2i\pi} \int_{g_b} MH^{a}\frac{dz}{z^{n+1}}
&\sim
-M(\rho)\sigma^{a_n}\frac{\rho^{-n}}{n} 
\sum_{m\geq 0} \frac{(-xh/\sigma)^{2m+1} }{\pi (2m+1)!} (-1)^{m}\Gamma(m+3/2)\\
&\sim
M(\rho)\sigma^{a_n}\frac{\rho^{-n}}{n} 
\sum_{m\geq 0} \frac{(xh/\sigma)^{2m+1}}{ \sqrt{\pi}2^{2m+1}m!}  (-1)^{m}\\
&\sim
M(\rho)\sigma^{a_n}\frac{\rho^{-n}}{n} 
\frac{1}{ \sqrt{\pi}} {\sf Ray}\Big(\frac{xh}{\sigma}\Big).
\end{align*}
We observe that the integral on $g_s$ does not contribute to the asymptotics and we get the announced result.\qedhere
\end{proof}

\section{DH graphs form a subcritical block-stable class}\label{Sec:AppendixDH_are_stable}

We start by recalling the definition of blocks and block-stable graph classes;
we refer to \cite{SubcriticalClasses} for details.
Recall first that the notion of cut-vertices and of $2$-connected graphs have been
defined at the beginning of \cref{Sec:2connectedDH}.
A {\em block} in a graph $G$ is a maximal induced subgraph $B$ of $G$ without cut-vertices (of itself).
A class $\mathcal C$ of graphs is called {\em block-stable} if the following holds:
a graph $G$ is in $\mathcal C$ if and only if all its blocks are in $\mathcal C$.

We now argue that DH graphs form a block-stable class of graphs.
It is known, see \emph{e.g.} \cite[Theorem 10.1]{BLS99},
that DH graphs are the graphs avoiding as induced subgraphs
the following graphs:
the house, the holes, the gem and the domino.
 Since all these graphs are $2$-connected,
 an induced copy of any of them in a graph $G$
 is necessarily included in a single block of $G$.
 Therefore the avoidance of these induced subgraphs 
 can be checked for each block separately, and
the class of DH graphs is indeed block-stable.

Since the class is block-stable, the generating series $D(z)$
of rooted DH graphs or, equivalently of DH trees,
satisfies the equation
\begin{equation}
\label{eq:block_decompo}
D(z)=z \exp(B'(D(z))),
\end{equation}
where $B$ is the generating series of (unrooted) blocks (this is eq.(14) in \cite{SubcriticalClasses}).
It remains to check that the class of DH graphs is subcritical,
\emph{i.e.}~that $D(\rho)$ is smaller than the radius of convergence $\rho_B$ of $B$. (Recall that $D$ and $\rho$ were defined in \cref{ssec:singularity_analysis_NonMarked}.)

For this, we recall that in general, blocks
are either $2$-connected graphs, or restricted to a single vertex or to two vertices 
with a single edge.
Hence the series $B'$ of rooted blocks of DH graphs coincide,
up to the coefficients of $1$ and $z$, with the generating series $\Ddc$
of rooted $2$-connected DH graphs.
In particular, $\rho_B=\rho_{2c}$ and our analysis in \cref{ssec:asymptotics_mainseries_2c} shows that $B''(\rho_B)=+\infty$.
Consequently, there exists $\tau <\rho_B$ such that $\tau B''(\tau)=1$.
This implies that $D(z)$ belongs to the smooth inverse-function schema
in the sense of \cite[Definition VII.3, p.453]{Violet}. From \cite[Theorem VII.2]{Violet},
we have that $D(\rho)=\tau$. Therefore $D(\rho)<\rho_B$ as wanted,
and the class of DH graphs is indeed subcritical.

\section{Gromov--Hausdorff--Prohorov convergence in \cite{SubcriticalClasses}}\label{Sec:AppendixGH-GHP}

The main result of \cite{SubcriticalClasses} is the convergence for the Gromov--Hausdorff topology of a uniform random
graph in a subcritical block-stable class to the Brownian CRT.
We argue here that without further effort, the authors could have proven convergence
for the stronger Gromov--Hausdorff--Prohorov topology.
We use here notation from \cite{SubcriticalClasses}.
The proof compares a uniform random graph $C^\bullet_n$ in the class
and its block decomposition tree $T_n$. 
It uses the fact that the identity map from $T_n$ to $C^\bullet_n$ 
does not modify much distances.
Obviously this identity map brings the uniform distribution on vertices of $T_n$
to that on vertices of $C^\bullet_n$.
Therefore, using \cite[Prop.6, p.763]{miermont2009tessellations},
 we see that $C^\bullet_n$ and $T_n$ are close for the 
GHP topology.
Besides, since $T_n$ has the distribution of a conditioned Galton--Watson tree,
it is known that $T_n$ converges to the Brownian CRT for the 
GHP topology (for the GH topology, a classical reference is \cite{LeGall};
for the GHP topology, a much stronger result is given in \cite{HeWinkel}).
We conclude that $C^\bullet_n$ also 
converges to the Brownian CRT for the GHP topology, as claimed.

\section*{Acknowledgements}
MB has been partially supported by the Swiss National Science Foundation, under grant number 200021-172536.

The authors are grateful to Anita Winter for explanations and bibliographic pointers
on the relation between convergence of distance matrices, convergence for the Gromov--Prohorov distance
and convergence for the Gromov--Hausdorff--Prohorov distance.
We are also grateful to Éric Fusy for suggesting the argument given in \cref{Sec:AppendixDH_are_stable}
to justify that the class of DH graphs is subcritical.

Last but not least, this project started as a collaboration with Mickaël Maazoun
and the authors are indebted to him for his input at the beginning.

\end{document}